\journal{Journal of \LaTeX\ Templates}
\newtheorem{theo}{Theorem}[section]
\newtheorem{lemma}{Lemma}[section]
\newtheorem{defi}{Definition}[section]
\numberwithin{equation}{section}
\begin{document}
	
\begin{frontmatter}
		
\title{Iteration Formulae for Brake Orbit and Index Inequalities for Real Pseudoholomorphic Curves}
		
\author[address]{Beijia Zhou\corref{mycorrespondingauthor}}
\cortext[mycorrespondingauthor]{Corresponding author}
\ead{beijiachow@gmail.com}

\address[address]{Chern Institute of Mathematics, Nankai University and LPMC, Tianjin 300071, China.}
		
\begin{abstract}
I give precise iteration formulae for brake orbit in dimension $3$ and use these formulae to get some index inequalities for moduli spaces of Real pseudoholomorphic Curves, which are important to establish Real embedded contact homology and Real cylindrical contact homology in dimension $3$.
\end{abstract}
		
\end{frontmatter}

\section{Introduction}

\subsection{Motivation}

In contact manifold of dimension 3, M.Hutchings used iteration formulae for periodic orbit to study moduli space of pseudoholomorphic curves and established embedded contact homology, which we abbreviate by ECH, and later with J.Nelson, they established cylindrical contact homology, which we abbreviate by CCH, for dynamically convex contact forms. See references \cite{Hutchings}, \cite{HT1}, \cite{HT2}, \cite{ECH}, \cite{HN}.

The origin of ECH comes from the Seiberg-Witten Floer Homology and the equivalence between Seiberg-Witten invariant and Gromov invariant\cite{Taubes1}, \cite{Taubes2}, \cite{Taubes3}, \cite{Taubes4}, \cite{Taubes5}, \cite{Taubes}. There exist some counterpart theories for Real pseudoholomorphic curves like Real Gromov-Witten invariant, defined by J.Welschinger\cite{Welschinger} and Real Seiberg-Witten invariant, defined by G.Tian and S.Wang\cite{Tian}. And Real pseudoholomorphic curves have been used to study brake orbits in contact manifold by U.Frauenfelder and J.Kang\cite{Urs}.

Following those ideas, I think there should exists Real cylindrical contact homology, Real embedded contact homology and Real Seiberg-Witten Floer Homology. At the same time Real embedded contact homology should be isomorphic to Real Seiberg-Witten Floer Homology by similar method of C.Taubes in \cite{Taubes}.

In order to construct Real ECH, we must know the relevant information of iteration formulae for brake orbits and moduli space of Real pseudoholomorphic curves. In this paper we will study these two aspects. It is the first step to construct Real ECH.

\subsection{Preliminaries}
Here we set up our conventions.

Let $(Y, \lambda)$ be a closed contact manifold of dimension $3$ with contact form $\lambda$, $\xi = \ker \lambda$ be the associate contact structure, and $R$ be the associate Reeb vector field. Let $N$ be an anticontact involution of $Y$, which means $N$ is an automorphism of $Y$, and $N^2 = Id, N^*\lambda = -\lambda$. A periodic Reeb orbit of period $\tau$ is a map $\alpha : {\bf R} \to Y$, which satisfies $\alpha(t + \tau) = \alpha(t), \alpha'(t) = R(\alpha(t))$. For a periodic Reeb orbit $\alpha$, we can choose a trivialization  of the symplectic space $(\xi, d\lambda)$ along $\alpha(t)$. So the linearized Reeb flow  $\Psi_{\alpha}(t) : (\xi_{\alpha(0)}, d\lambda) \to (\xi_{\alpha(t)}, d\lambda)$ is a symplectic path by the trivialization. A periodic Reeb orbit is nondegenerate if $\Psi_\alpha(\tau)$ does not have $1$ as an eigenvalue. In the following we assume all periodic orbits are nondegenerate. This condition holds for generic contact forms.

A brake orbit $\beta$ with period $\tau$ is a periodic orbit with Real symmetry i.e.
\begin{align}
\begin{cases}
\beta'(t) & = R(\beta(t)), \\
\beta(t + \tau) & = \beta(t), \\
\beta(-t) & = N\beta(t), \\
\end{cases}
\end{align}
Let $L := \{x \in Y|Nx = x \}$, $L$ is a Legendrian submanifold if $L \neq \emptyset$.
The above conditions are equivalent to
\begin{align}
\begin{cases}
\beta'  = R(\beta(t)), \\
\beta(0), \beta(\frac{\tau}{2}) \in L
\end{cases}
\end{align}

For any $M \in Sp(2n)$, its graph is defined by $Gr(M)  := \{ (x, Mx)| x \in {\bf R}^{2n}\}$ Let $L_0 := \{ 0 \} \times {\bf{R}}^n, L_1 := {\bf{R}}^n \times \{ 0 \}$,
$W := \{ (x, x) \in {{\bf{R}}^{4n}}| x \in {\bf R}^{2n} \}$. Let $\mu^{RS}$ be the Robbin-Salamon index for a pair of paths of Lagrangian subspace defined in \cite[\S 3]{Robbin}. We define $\mu_{CZ}(\alpha) = \mu^{RS}(W, Gr(\Psi([0, \tau])))$, and $\mu_1(\beta) = \mu^{RS}(L_0, \Psi([0, \frac{\tau}{2}])L_0)$, $\mu_2(\beta) = \mu^{RS}(L_1, \Psi([0, \frac{\tau}{2}])L_1)$, $\mu_1, \mu_2$ count only half period because of Real symmetry. In \cite[Proposition C]{LZZ}, they proved $\mu_{CZ} = \mu_1 + \mu_2, |\mu_1 - \mu_2| \leq n$.\footnote{Our convention is a little different to the one in \cite{LZZ}, because we use Robbin-Salamon index.}

In symplectization of $Y$, i.e. $({\bf R} \times Y, \omega = d(e^s\lambda))$, we can extend the involution $N$ to ${\bf R} \times Y$ by defining $N$ be identity map on $\bf R$. It is an antisymplectic involution $N^*\omega = -\omega$, and we still use $N$ to denote the extension. We can take an almost complex structure $J$ on $\xi$ compatible with $N, J \circ dN = -dN \circ J$, and extend it to ${\bf R} \times Y$ by defining $J\partial_s = R$, where $s$ is the coordinate of ${\bf R}$. We can choose a trivialization of $\xi$ such that $J, dN$ are represented by the following standard matrices in each fiber
\begin{equation*}
J_0 = \begin{pmatrix}
0 & -1 \\
1 & 0
\end{pmatrix}, N_0 = \begin{pmatrix}
-1 & 0 \\
0 & 1
\end{pmatrix}
\end{equation*}
In the following we always choose such trivialization.

A pseudoholomorphic curve is a map $u : (\Sigma, i) \to ({\bf R} \times Y, J), du \circ i = J \circ du$. Where $\Sigma$ is a Riemann surface with punctures, and $u$ converges to a periodic orbit at each puncture. A Real pseudoholomorphic curve is a pseudoholomorphic curve with Real symmetry, which means there exists an involution $N$ on $\Sigma$, $N^2 = Id, dN \circ i = -i \circ dN$, and $u(N\cdot) = Nu(\cdot)$. Let $\mathcal{M}_{g, m+n}(\alpha_1, \cdots, \alpha_m; \alpha'_1, \cdots, \alpha'_n)$ denote the moduli space of pseudoholomorphic curves with $m$ positive punctures and $n$ negative punctures, which converge to $\alpha_1, \cdots, \alpha_m, \alpha'_1, \cdots, \alpha'_n$; Let $\mathcal{M}^R_{g, k+l,m+n}(\beta_1, \cdots, \beta_k; \beta'_1, \cdots, \beta'_l; \alpha_1, \cdots, \alpha_m; \alpha'_1, \cdots, \alpha'_n)$ denote the moduli space of Real pseudoholomorphic curves with $k$ positive punctures and $l$ negative punctures, where they converge to brake orbits $\beta_1, \cdots, \beta_k; \beta'_1, \cdots, \beta'_l$, and with $m$ pairs of positive punctures and $n$ pairs of negative punctures, where they converge symmetric to periodic orbits $\alpha_1(t), N\alpha_1(-t) \cdots, \alpha_m(t), N\alpha_m(-t), \alpha'_1(t),\\ N\alpha'_1(-t),\cdots, \alpha'_n(t), N\alpha'_n(-t)$ in pairs, see \cite[\S 3]{ZZ} . We abbreviate the notation by $\mathcal{M}$ and $\mathcal{M}^R$, when $g=0$ and it is not necessary to mention the asymptotics.

It is well know that the virtual dimension of the moduli space of pseudoholomorphic curves is given by the following formula\cite[\S 3.3.11 Theorem]{Schwarz}, \cite[Theorem 1.8]{Dragnev}
\begin{align*}
&\text{vir dim}\mathcal{M}_{g, m+n}(\alpha_1, \cdots, \alpha_m; \alpha'_1, \cdots, \alpha'_n) \\
&\quad = -\chi(\Sigma) + 2c_1(u(\Sigma)) + \sum_{p = 1}^{m}\mu_{CZ}(\alpha_p) - \sum_{q = 1}^{n}\mu_{CZ}(\alpha'_{q})
\end{align*}
Where $c_1(u(\Sigma))$ is the first Chern class of $\xi$ by a trivialization along $u(\Sigma)$, $\mu_{CZ}$ are the Conley-Zehnder index for the periodic Reeb orbits defined via the same trivialization.

In \cite{ZZ} the author and C.Zhu calculated that the virtual dimension of the moduli space of Real pseudoholomorphic curves, which is given by
\begin{align*}
&\text{vir dim}\mathcal{M}^R_{g, k+l,m+n}(\beta_1, \cdots, \beta_k; \beta'_1, \cdots, \beta'_l; \alpha_1, \cdots, \alpha_m; \alpha'_1, \cdots, \alpha'_n)\\
&= -\frac{1}{2}\chi(\Sigma) + c_1(u(\Sigma)) + \sum_{i = 1}^{k}\mu_1(\beta_i) - \sum_{j = 1}^{l}\mu_1(\beta'_{j}) +\sum_{p = 1}^{m}\mu_{CZ}(\alpha_i) - \sum_{q = 1}^{n}\mu_{CZ}(\alpha'_{q})
\end{align*}

\subsection{Iteration formulae for brake orbit}

We can iterate a periodic Reeb orbit $(\alpha, \tau)$ and get new periodic Reeb orbits $(\alpha^k, k\tau)$. The same holds for brake orbits $(\beta^k, k\tau)$. It is well known the symplectic path $\Psi_{\alpha}(t)$ of a periodic orbit is iterated from the first period by $\Psi_{\alpha}(t + \tau) = \Psi_{\alpha}(t)\Psi_{\alpha}(\tau)$. The symplectic path $\Psi_{\beta}(t)$ of a brake orbit is iterated after the first half period by $\Psi_{\beta}(t) = N\Psi_{\beta}(\tau - t)\Psi_{\beta}(\frac{\tau}{2})^{-1}N\Psi_{\beta}(\frac{\tau}{2}), t \in [\frac{\tau}{2}, \tau], \Psi_{\beta}(t + \tau) = \Psi_{\beta}(t)\Psi_{\beta}(\tau)$, see \cite[Equation (4.2)]{LZ}.

Periodic Reeb orbits have three types:elliptic, positive and negative hyperbolic, which depend on the eigenvalue of $\Psi_{\alpha}(\tau)$. When $\Psi_{\alpha}(\tau)$ has eigenvalues on the unit circle, $\alpha$ is elliptic. When $\Psi_{\alpha}(\tau)$ has positive or negative real eigenvalues, $\alpha$ is positive or negative hyperbolic. Elliptic orbit has iteration formula $\mu_{CZ}(\alpha^k) = 2[k\theta] + 1$. And a hyperbolic orbit has formula $\mu_{CZ}(\alpha^k) =km$, where $m$ is an odd number, when $\alpha$ is negative hyperbolic, and $m$ is an even number, when $\alpha$ is positive hyperbolic.

D.Zhang and C.Liu have given the abstract precise iteration formulae for brake orbits in all dimensions \cite[Theorem 1.3]{LZ}. Based on their works, we can give the precise iteration formulae in dimension $2$. There are only finitely many cases as well. We will use them to study moduli spaces of Real pseudoholomorphic curves in dimension $3$ later.

For the elliptic case, $\mu_1(\beta) = \frac{1}{2}\mu_{CZ}(\beta)$. For the hyperbolic case, both the positive and the negative case have two subcases. We can distinguish them by the sign of $\mu_1(\beta^2) - \mu_2(\beta^2)$. Therefore we call a brake orbit $\beta$ type one, if $\mu_1(\beta^2) - \mu_2(\beta^2) > 0$. And we call a brake orbit $\beta$ type two if $\mu_1(\beta^2) - \mu_2(\beta^2) < 0$. So we can list all patterns.

\begin{theo}
Any nondegenerate brake orbit belongs to one of the following five cases
\end{theo}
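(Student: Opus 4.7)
The plan is to combine the Zhang--Liu abstract iteration theorem for brake orbits \cite[Theorem 1.3]{LZ} with the classification of conjugacy classes in $Sp(2)$. In dimension three the linearized Reeb flow along $\beta$ acts on the two-dimensional symplectic space $(\xi, d\lambda)$, so $\Psi_\beta(\tau) \in Sp(2)$. Nondegeneracy excludes $1$ as an eigenvalue, leaving three possibilities for the full-period monodromy: elliptic (eigenvalues $e^{\pm 2\pi i\theta}$ with $\theta \in (0, 1/2)$), positive hyperbolic (real eigenvalues $\lambda, \lambda^{-1}$ with $\lambda > 0$), or negative hyperbolic ($\lambda < 0$).

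For the elliptic case, I would use the half-period identity $\Psi_\beta(t) = N \Psi_\beta(\tau - t) \Psi_\beta(\tau/2)^{-1} N \Psi_\beta(\tau/2)$ from \cite[Equation (4.2)]{LZ} together with the rotational symmetry of the path, which forces the crossing forms computing $\mu_1(\beta^k)$ and $\mu_2(\beta^k)$ to coincide because the $N$-equivariant rotation intertwines $L_0$ and $L_1$. Combined with $\mu_{CZ} = \mu_1 + \mu_2$, this collapses the elliptic analysis into a single pattern, governed by the rotation number $\theta$ and the standard floor formula for $\mu_{CZ}(\beta^k)$.

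For the hyperbolic cases, the stable and unstable eigenlines of $\Psi_\beta(\tau)$ are fixed, and each of $L_0, L_1$ meets these eigenlines transversally. Setting $t = \tau$ in the half-period identity gives $\Psi_\beta(\tau) = N \Psi_\beta(\tau/2)^{-1} N \Psi_\beta(\tau/2)$, which restricts how $\Psi_\beta(\tau/2)$ can move the two Lagrangians. I would check that up to $N$-equivariant symplectic conjugation there are exactly two normal forms in each hyperbolic class, corresponding to which eigenline is paired with $L_0$ versus $L_1$ under the involution. A direct crossing-form computation on the second iterate shows that the two normal forms are separated by the sign of $\mu_1(\beta^2) - \mu_2(\beta^2)$, recovering the type-one/type-two dichotomy fixed just before the theorem. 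Together with the elliptic pattern this exhausts the five cases.

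The main obstacle will be the $N$-equivariant normal-form classification of $\Psi_\beta(\tau/2)$ in $Sp(2)$: one must confirm that no additional subcases hide in the hyperbolic regime and, crucially, that $\mu_1(\beta^2) - \mu_2(\beta^2)$ never vanishes in the hyperbolic case, so that its sign is a complete separator. Once these normal forms are pinned down, writing out the explicit iteration formulae for $\mu_1(\beta^k)$ and $\mu_2(\beta^k)$ in each case reduces to evaluating $\mu^{RS}$ on a path ending at one of a finite list of matrices, which is routine from the catenation and naturality properties of the Robbin--Salamon index \cite{Robbin} together with the iteration identity $\Psi_\beta(t + \tau) = \Psi_\beta(t) \Psi_\beta(\tau)$.
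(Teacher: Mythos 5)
Your approach is a genuine alternative, and it is worth being precise about how it differs from the paper. The paper's Section~2 proof never constructs normal forms: it runs entirely through the Liu--Zhang index calculus. Their abstract iteration theorems express $i_{L_0}(\beta^k)$ and $i_{L_1}(\beta^k)$ in terms of $i_{L_0}(\beta)$, $i_{L_1}(\beta)$, $i_{\sqrt{-1}}^{L_0}(\beta)$, $i_{\sqrt{-1}}^{L_1}(\beta)$ and the $\omega$-indices; for hyperbolic orbits the splitting numbers vanish so $i_{\omega}(\beta) = \mu_{CZ}(\beta)$ for all $\omega$; then $\mu_{CZ} = \mu_1 + \mu_2$, $|i_{L_0} - i_{L_1}| \leq 1$, and $i_{L_*} \leq i_{\sqrt{-1}}^{L_*} \leq i_{L_*}+1$ drive a purely algebraic parity argument: odd $\mu_{CZ}(\beta)$ forces $i_{L_0}(\beta) = i_{L_1}(\beta)$ (elliptic and negative hyperbolic), even $\mu_{CZ}(\beta)$ forces $i_{\sqrt{-1}}^{L_0}(\beta) = i_{\sqrt{-1}}^{L_1}(\beta)$ and $i_{L_0}(\beta) = i_{L_1}(\beta) \pm 1$ (positive hyperbolic), and the one residual binary choice is exactly the type-one/type-two dichotomy, with $\mu_1(\beta^2) - \mu_2(\beta^2) = \pm 1$ falling out automatically --- this answers, at no extra cost, the nonvanishing question you flag as a concern. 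Your geometric route via $N$-equivariant normal forms for $\Psi_\beta(\tau/2)$ is precisely what the paper isolates in Appendix~B as the GIT quotient picture (a circle with four spikes), offered there as a consistency check rather than the proof. Two soft spots in your version deserve care: (i) the elliptic claim that an ``$N$-equivariant rotation intertwines $L_0$ and $L_1$'' is at best a heuristic --- $N_0$ conjugates $R(\theta')$ to $R(-\theta')$, not $L_0$ to $L_1$ --- whereas the clean reason $\mu_1(\beta^k) = \mu_2(\beta^k)$ in the elliptic case is simply that $\mu_{CZ}(\beta^k) = 2[k\theta]+1$ is always odd together with $|\mu_1(\beta^k) - \mu_2(\beta^k)| \leq 1$; and (ii) your closing remark that the iteration ``reduces to evaluating $\mu^{RS}$ on a path ending at one of a finite list of matrices'' is too optimistic as stated, since $\mu^{RS}$ is a path invariant and not an endpoint invariant; what the normal form pins down are the $\omega$-indices and nullities, while the actual reduction $\mu_1(\beta^k) \rightsquigarrow \mu_1(\beta)$ is carried by the Bott-type Liu--Zhang iteration formula that you cite --- it should be foregrounded as the workhorse, not treated as routine naturality of Robbin--Salamon.
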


$\beta$ elliptic, $\beta = R(2\pi\theta')$
$$R(\theta') = \begin{pmatrix}
\cos\theta' & -\sin\theta'\\
\sin\theta' & \cos\theta'
\end{pmatrix}$$
$$\mu_1(\beta^k) = [k\theta] + \frac{1}{2}, \mu_{CZ}(\beta^k) = 2[k\theta] + 1$$

$\beta$ negative hyperbolic type one, $\mu_1(\beta^2) - \mu_2(\beta^2) = 1$, $\mu_{CZ}(\beta) = 2\mu_1(\beta)$
\begin{align*}
&\mu_1(\beta^k) = \begin{cases}k\mu_1(\beta), &k\ \text{is odd}\\
  k\mu_1(\beta) + \frac{1}{2}, &k\ \text{is even}
\end{cases}\\
&\mu_{CZ}(\beta^k) = k\mu_{CZ}(\beta) = 2k\mu_1(\beta)
\end{align*}

$\beta$ negative hyperbolic type two, $\mu_1(\beta^2) - \mu_2(\beta^2) = -1$, $\mu_{CZ}(\beta) = 2\mu_1(\beta)$
\begin{align*}
&\mu_1(\beta^k) = \begin{cases}k\mu_1(\beta), &k\ \text{is odd}\\
k\mu_1(\beta) - \frac{1}{2}, &k\ \text{is even}
\end{cases}\\
&\mu_{CZ}(\beta^k) = k\mu_{CZ}(\beta) = 2k\mu_1(\beta)
\end{align*}

$\beta$ positive hyperbolic type one $\mu_1(\beta^2) - \mu_2(\beta^2) = 1, \mu_{CZ}(\beta) = 2\mu_1(\beta) - 1$
\begin{align*}
&\mu_1(\beta^k) = k\mu_1(\beta) + \frac{1-k}{2}\\
&\mu_{CZ}(\beta^k) = k\mu_{CZ}(\beta) = 2k\mu_1(\beta) - k
\end{align*}

$\beta$ positive hyperbolic type two $\mu_1(\beta^2) - \mu_2(\beta^2) = -1, \mu_{CZ}(\beta) = 2\mu_1(\beta) + 1$
\begin{align*}
&\mu_1(\beta^k) = k\mu_1(\beta) + \frac{k-1}{2}\\
&\mu_{CZ}(\beta^k) = k\mu_{CZ}(\beta) = 2k\mu_1(\beta) + k
\end{align*}

For convenience of computation, we give the GIT description of all cases in the Appendix B \ref{Appendix 2}.

\subsection{Index Inequalities for Real pseudoholomorphic curves}
Following an idea of Michael Hutchings in \cite{Hutchings}\cite{HN}, we can get the counterpart of the index inequalities for moduli spaces of Real pseudoholomorphic curves, which are important to establish Real ECH and Real cylindrical contact homology in dimension $3$ in the future.

We define a Real pseudoholomorphic curve $u: \Sigma \to {\bf R} \times Y$ to be a Real branched cover, if there is another Real pseudoholomorphic curve $u': \Sigma' \to {\bf R} \times Y$, such that $u = u'\circ g$ and $g: \Sigma \to \Sigma'$ is a Real pseudoholomorphic function $g(N\cdot) = Ng(\cdot)$. We also call branched covers as multiple covers. If $g(x) = x'$, and $g(z)= z^k$, in local coordinate of $x, x'$, such that $x, x'$ are original point of ${\bf C}$. $x$ is called a branch point and $k-1$ the branch number of $g$ at $x$ which is denoted by $b_g(x)$. The total branch number of $g$ is defined to be the sum of branch number $B = \sum_{x \in \Sigma}(b_g(x))$. We define the covering multiplicity and total branch number of $u$ over $u'$ to be the degree of $g$ and the total branch number of $g$. See Chapter \uppercase\expandafter{\romannumeral 1} \S 1 and \S 2 in \cite{Kra}.

Our main results are the following three inequalities:

\begin{theo}(Real ECH lemma)
If $u$ is a Real pseudoholomorphic curve in ${\bf R} \times Y$ which is a Real branched cover of a Real trivial cylinder ${\bf R} \times \beta$, where $\beta$ is a brake orbit, then $ind_R(u) \geq 0$.
\end{theo}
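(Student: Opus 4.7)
The plan is to substitute the iteration formulae of Theorem~1.1 into the virtual dimension formula for $\mathcal{M}^R$ and, case by case, reduce the nonnegativity of $\mathrm{ind}_R(u)$ to an inequality that follows from Riemann--Hurwitz together with the genus of $\Sigma$ being nonnegative. First I fix notation: label the Real positive punctures by their multiplicities $a_1,\dots,a_k$, the pair positive punctures by $c_1,\dots,c_m$, and analogously $b_1,\dots,b_l$ and $d_1,\dots,d_n$ on the negative side, and set $A=\sum a_i$, $B'=\sum b_j$, $C=\sum c_p$, $D=\sum d_q$. The covering degree then equals $A+2C=B'+2D=d$.

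For the topological inputs, I choose the trivialization of $\xi|_\beta$ compatible with $N$; since the cover pulls back from this trivialization, the relative Chern number $c_1(u(\Sigma))$ vanishes. Applying Riemann--Hurwitz to the equivariant compactified cover $\bar g:\bar\Sigma\to S^2$ and keeping track of the ramification contributions at the two punctures of $\mathbb{R}\times\beta$, one computes $\chi(\Sigma)=-B$, where $B\ge 0$ is the total interior branch number, and $B=2g-2+(k+l+2m+2n)$. The virtual dimension formula therefore collapses to
\[
\mathrm{ind}_R(u)=\tfrac{B}{2}+\sum_i\mu_1(\beta^{a_i})-\sum_j\mu_1(\beta^{b_j})+\sum_p\mu_{CZ}(\beta^{c_p})-\sum_q\mu_{CZ}(\beta^{d_q}).
\]

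Now I case-split using Theorem~1.1. When $\beta$ is elliptic, the identity $\mu_1(\beta^k)=\tfrac12\mu_{CZ}(\beta^k)$ gives $\mathrm{ind}_R(u)=\tfrac12\mathrm{ind}(u)$, where $\mathrm{ind}(u)$ is the ordinary virtual dimension of $u$ viewed as a non-Real branched cover, and the conclusion follows from Hutchings' original ECH inequality in \cite{Hutchings}. In the two positive hyperbolic subcases both $\mu_1(\beta^k)$ and $\mu_{CZ}(\beta^k)$ are affine in $k$; after using $A+2C=B'+2D$ to cancel the terms proportional to $\mu_1(\beta)$, one is left with $\mathrm{ind}_R(u)=\tfrac12(B\pm(k-l))$. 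In the two negative hyperbolic subcases the same cancellation occurs but the parity correction to $\mu_1(\beta^k)$ survives only at even iterates, giving $\mathrm{ind}_R(u)=\tfrac12(B\pm(k_e-l_e))$, where $k_e,l_e$ count the Real positive/negative punctures of even multiplicity.

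It remains to verify the resulting inequalities. The positive hyperbolic ones reduce, via $B=2g-2+(k+l+2m+2n)$, to $g+k+m+n\ge 1$ and $g+l+m+n\ge 1$, both of which follow from nonemptiness of the respective ends. The main obstacle is the negative hyperbolic case: the needed inequality $B+k_e\ge l_e$ (resp.\ $B+l_e\ge k_e$) is not forced by the genus bound alone. Here the key observation is the parity identity $k_o\equiv l_o\equiv d\pmod 2$, extracted from $A+2C=B'+2D$, which guarantees that $k_o+l_o$ is even; combining this with the genus bound and the fact that $k_o=l_o=0$ forces either $k_e\ge 1$ or $m\ge 1$ on the positive end, one closes the argument and obtains $\mathrm{ind}_R(u)\ge 0$ in all five cases.
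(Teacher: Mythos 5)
Your proposal is correct and takes essentially the same approach as the paper: substitute the iteration formulae into the virtual dimension formula for $\mathcal{M}^R$, use the degree constraint $A+2C=B'+2D$ to cancel the $\mu_1(\beta)$-proportional terms, and close the negative hyperbolic case by the parity observation $k_o\equiv l_o\equiv d\pmod 2$, which is exactly the contradiction the paper invokes. The only cosmetic difference is that you treat the elliptic case by reducing $\mathrm{ind}_R(u)=\tfrac12\mathrm{ind}(u)$ to Hutchings' non-Real inequality, whereas the paper's main text carries out the floor/ceiling estimate directly; but the paper itself endorses your shortcut in the remark following the proof.
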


We will define the Real ECH generator and the Real ECH index in section $3$. Let $\beta_1, \beta_2$ be Real ECH generators, a flow line $u$ from $\beta_1$ to $\beta_2$ is a Real pseudoholomorphic curve which converges to $\beta_1, \beta_2$ at positive and negative punctures, and its projection to $Y$ represents a class $Z \in H_2(Y; \beta_1, \beta_2)$.
\begin{theo}(Real ECH inequality) Let $\beta_1, \beta_2$ be Real ECH generators, $u$ is a flow fine from $\beta_1$ to $\beta_2$, then the Real Fredholm index is smaller than or equals the Real ECH index, $ind_R u \leq I_{RECH}(\beta_1, \beta_2; Z)$.
Equality holds only if the ends of $u$ satisfies an unique partition which we will give in Theorem \ref{Par}.
\end{theo}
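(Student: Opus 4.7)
My plan is to follow the same scheme that Hutchings uses for the ordinary ECH inequality in \cite{Hutchings}, with the extra bookkeeping forced by the Real symmetry at brake orbit ends and at paired periodic orbit ends. The proof will rest on rewriting the difference $I_{RECH}(\beta_1,\beta_2;Z) - ind_R(u)$ as a sum of non-negative local contributions, one for each end of $u$, each of which is controlled by a writhe/linking bound combined with a combinatorial inequality on partitions of the multiplicity coming from the iteration formulae of the previous theorem.

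First I would unwrap the definitions. Using the virtual dimension formula stated in the preliminaries,
\[
ind_R(u) = -\tfrac{1}{2}\chi(\Sigma) + c_1(u(\Sigma)) + \sum_{i}\mu_1(\beta_{1,i}) - \sum_{j}\mu_1(\beta_{2,j}) + \sum_p \mu_{CZ}(\alpha_{1,p}) - \sum_q \mu_{CZ}(\alpha_{2,q}),
\]
while the Real ECH index $I_{RECH}$, as will be defined in section 3, is a sum of a relative first Chern term, a relative self-intersection term $Q$, and a total Conley--Zehnder/$\mu_1$ term taken over the multiplicities that actually label the Real ECH generators $\beta_1,\beta_2$. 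Taking the difference and applying the adjunction-type identity for Real pseudoholomorphic curves (the Real analog of Hutchings' relative adjunction formula, which equates $c_1+Q+w$ to an Euler-characteristic expression), the topological pieces cancel and one is left with a sum over ends of the form
\[
I_{RECH} - ind_R \;=\; \sum_{\text{ends}} \Bigl(\Delta_{CZ \text{ or } \mu_1}(\text{partition}) \;-\; 2\cdot\text{writhe contribution}\Bigr).
\]

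The core of the argument is then to show each end term is $\geq 0$. For ends converging to \emph{paired} periodic orbits $\alpha, N\alpha(-\cdot)$ the analysis is essentially Hutchings' original writhe bound applied twice, coupled by the involution, and reduces to the standard elliptic/hyperbolic partition inequality using $\mu_{CZ}(\alpha^k)=2[k\theta]+1$ or the hyperbolic formula. The genuinely new case, and the place I expect the main obstacle, is the brake orbit end: here the puncture is a single Real-symmetric end, and the relevant index is $\mu_1$ rather than $\mu_{CZ}$, so I have to redo the writhe estimate on a branched cover of a trivial cylinder over a brake orbit $\beta$ and then match it against $\sum_k \mu_1(\beta^{k})$ over the partition of the covering multiplicity. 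The five cases from the previous theorem have to be checked one by one, with the two type one/type two subcases in each hyperbolic class being the subtlest because the integer corrections $\pm\tfrac12$ at even iterates force a sharper writhe/partition inequality than in the non-Real setting. I would model each case on Hutchings' elliptic and hyperbolic partition lemmas, but with the ``circle'' replaced by a half-circle reflecting the $t\mapsto \tau-t$ symmetry.

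Finally, once each local contribution is shown to be non-negative, equality in $ind_R(u) \le I_{RECH}$ forces equality at every end simultaneously. Standard combinatorial arguments, exactly as in Hutchings' proof of uniqueness of the admissible partition, then identify a unique partition of each covering multiplicity in each of the five brake-orbit types, producing the partition that will be recorded in Theorem \ref{Par}. The only delicate point is verifying uniqueness in the mixed positive/negative end case, which follows by combining the positive-end and negative-end extremal partitions as in the standard ECH argument.
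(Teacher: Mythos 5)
Your proposal lines up with the paper's actual proof. Both reduce the theorem to a collection of local end-by-end partition inequalities via a writhe/linking bound on the braid at each brake-orbit end, and both verify these inequalities case-by-case against the five iteration patterns from the previous section; the equality case then forces the extremal partition at every end simultaneously, giving uniqueness.

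One small point of difference worth noting: where you propose to "model each case on Hutchings' elliptic and hyperbolic partition lemmas" and redo the writhe estimate essentially from scratch, the paper instead leverages Hutchings' \emph{already-proved} non-Real inequality ($\text{Left}_0 \geq \text{Right}_0$) as a black box and transfers it to the Real setting in three of the five cases by writing $\mu_1$ as $\tfrac12\mu_{CZ}$ plus a type-dependent correction, so that $\text{Left} = \tfrac12\text{Left}_0 - (\cdots)$ and $\text{Right} = \tfrac12\text{Right}_0 - (\cdots)$; only the hyperbolic type-one cases get a fully direct computation. The transfer trick is cleaner and avoids re-deriving the torus-braid linking estimates, while your from-scratch route is more self-contained but would require more careful bookkeeping of the $\pm\tfrac12$ corrections at even iterates, precisely the place you flagged as the subtlest. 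Either route gets there; the paper's choice just minimizes the amount of new analysis.
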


\begin{theo}(Index inequality of Real multiple cover)
If $u$ is a Real pseudoholomorphic curve  which is a Real multiple cover of a somewhere injective Real pseudoholomorphic curve $\bar u$, let $D$ denote the covering multiplicity and $B$ denote the total branch number of $u$ over $u'$, then $ind_R u \geq Dind_R(\bar u) + (B + 1 + D) - \#_1 - \#_2$
\end{theo}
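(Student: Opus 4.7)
The plan is to adapt M.~Hutchings' proof of the multiple-cover index inequality for ordinary pseudoholomorphic curves (the analogue of this statement underlying ECH) to the Real setting, using the five-case iteration formulae for brake orbits from Theorem 1.1 in place of the usual Conley--Zehnder iteration formula at brake-orbit ends of $u$. The skeleton of the argument is: expand $\mathrm{ind}_R(u)-D\cdot\mathrm{ind}_R(\bar u)$ using the Real virtual dimension formula, collect the topological terms via Riemann--Hurwitz, and bound the end-contributions orbit by orbit.

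First I would compute $\mathrm{ind}_R(u)-D\cdot\mathrm{ind}_R(\bar u)$ directly from the Real virtual dimension formula given in the preliminaries. Since $u=\bar u\circ g$ with $g:\Sigma\to\Sigma'$ a degree-$D$ equivariant branched cover, a compatible choice of trivializations of $\xi$ along the common image gives $c_1(u(\Sigma))=D\cdot c_1(\bar u(\Sigma'))$, so these terms cancel. The Euler characteristic difference $-\tfrac12\chi(\Sigma)+\tfrac{D}{2}\chi(\Sigma')$ is handled by the (equivariant) Riemann--Hurwitz identity $\chi(\Sigma)=D\chi(\Sigma')-B$, contributing a term proportional to $B$; one needs to check that the Real structure on $g$ is consistent with this, noting that interior branch points of $g$ are either fixed by the involution on $\Sigma$ or come in conjugate pairs, both accounted for by the definition of $B$ in the paper.

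Next I would organize the end contributions. At each end $p$ of $\bar u$ with asymptotic orbit $\gamma_p$, the ends of $u$ above $p$ have covering multiplicities $k_{p,1},\dots,k_{p,r_p}$ satisfying $\sum_j k_{p,j}=D$. The corresponding contribution to $\mathrm{ind}_R(u)-D\cdot\mathrm{ind}_R(\bar u)$ is
\[
\sum_{j=1}^{r_p}\mu(\gamma_p^{k_{p,j}})-D\,\mu(\gamma_p)=\sum_{j=1}^{r_p}\bigl(\mu(\gamma_p^{k_{p,j}})-k_{p,j}\,\mu(\gamma_p)\bigr),
\]
where $\mu=\mu_1$ for brake ends (asymptotic to $\beta^{k_{p,j}}$) and $\mu=\mu_{CZ}$ for the paired periodic ends. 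I would then evaluate each summand using Theorem 1.1 for the five types of brake orbits and the standard elliptic/positive-/negative-hyperbolic iteration formulae for periodic orbits. Elliptic ends yield a non-negative floor-function term of the form $[k_{p,j}\theta]-k_{p,j}[\theta]$; hyperbolic ends yield explicit integer or half-integer corrections depending on parity of $k_{p,j}$ and on the type one/type two distinction.

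Summing over $p$ and combining with the Riemann--Hurwitz contribution would produce the desired lower bound, with the constants $1$ and $D$ emerging from the standard combinatorial identity $\sum_p(r_p-1)\ge \#\text{ends}(\bar u)-1$ together with $\sum_p\sum_j k_{p,j}=D\cdot\#\text{ends}(\bar u)$. The counts $\#_1$ and $\#_2$ appear precisely as the number of ends of $u$ lying in the two ``bad'' cases of Theorem 1.1 --- positive hyperbolic type one and negative hyperbolic type two brake-orbit ends at which the iteration formulae give strictly smaller values than $k\cdot\mu_1(\beta)$ for the relevant parity of $k$ --- and must be subtracted to compensate for this loss.

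The main obstacle will be the bookkeeping: one must perform a uniform case analysis across the five types of brake orbits, two parity classes (odd/even $k_{p,j}$), and three types of periodic orbit, while simultaneously tracking how ends of $u$ partition over the ends of $\bar u$. A secondary subtlety is verifying that the Real structure on the branched cover $g$ is compatible with the puncture asymptotics in such a way that the paired periodic-orbit ends behave exactly like ordinary ends in the classical Hutchings argument; once this is in place, the Real periodic-orbit contributions reduce to the classical case, and the genuinely new content lies in the brake-orbit terms controlled by Theorem 1.1.
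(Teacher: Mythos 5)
The overall skeleton you describe — expand $\mathrm{ind}_R(u)-D\,\mathrm{ind}_R(\bar u)$ via the Real virtual dimension formula, use Riemann--Hurwitz for the Euler-characteristic term, and then bound the end contributions orbit by orbit using the iteration formulae — is indeed the skeleton of the paper's proof. However, there is a structural error in the way you organize the end contributions, and it is precisely the error that prevents you from seeing where $\#_1$ and $\#_2$ actually come from.

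You write, at each end $p$ of $\bar u$, the contribution as $\sum_j \mu(\gamma_p^{k_{p,j}}) - D\,\mu(\gamma_p)$ with a single index function $\mu$ per end ($\mu=\mu_1$ if $p$ is a brake-orbit end, $\mu=\mu_{CZ}$ if $p$ is a paired periodic end). This is not how the covering works in the Real setting. A symmetric (brake-orbit) end of $\bar u$ can be covered simultaneously by \emph{symmetric} ends of $u$, which contribute $\mu_1$, and by \emph{pairs of nonsymmetric} ends of $u$, which contribute $\mu_{CZ}$, with $D=\sum_i p_i + \sum_j 2q_j$. The paper treats this mixed covering explicitly, and the defects $\#_1$ and $\#_2$ arise exactly here: for a pair of nonsymmetric ends $\beta^{q_j}$ of $u$ covering a brake orbit $\beta$, the generic two-sided bound is
\[
2q_j\mu_1(\beta)-(q_j-1)\ \le\ \mu_{CZ}(\beta^{q_j})\ \le\ 2q_j\mu_1(\beta)+(q_j-1),
\]
which holds for elliptic, negative hyperbolic, and the ``other'' positive hyperbolic type, but fails by exactly one when $\beta$ is \emph{positive hyperbolic type one} at a positive end (where $\mu_{CZ}(\beta^{q})=2q\mu_1(\beta)-q$) and when $\beta$ is \emph{positive hyperbolic type two} at a negative end (where $\mu_{CZ}(\beta^{q})=2q\mu_1(\beta)+q$). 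Since your framework never lets a $\mu_{CZ}$-contributing end of $u$ sit over a $\mu_1$-contributing end of $\bar u$, the correction terms never appear in your bookkeeping; you instead attribute them (incorrectly) to a deficit of the form $\mu_1(\beta^k)<k\mu_1(\beta)$ at brake-orbit ends, and you also misname the second bad case as ``negative hyperbolic type two'' rather than ``positive hyperbolic type two.'' By contrast, the paper's Lemma on $\mu_1$ (the two-sided estimate $|\mu_1(\alpha^k)-k\mu_1(\alpha)|\le\frac{k-1}{2}$) holds uniformly across all five brake-orbit types, so the purely symmetric ends never produce an extra loss. Finally, note that the constant in the statement as given, $B+1+D$, appears to be a typographical slip: the paper's own computation (Theorem 4.1) gives $B+1-D$, which is what the Riemann--Hurwitz bookkeeping actually produces; a correct proposal should either notice this or at least compute the constant explicitly rather than claim it falls out of a combinatorial identity.
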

Where $\#_1$ is the number of hyperbolic positive type one pairs which cover a brake orbit at positive asymptotics; $\#_2$ is the number of hyperbolic positive type two pairs which cover a brake orbit at negative asymptotics.

In Appendix A \ref{Appendix 1} we will give the possible application of those inequalities to the construction of Real cylindrical contact homology.

\section{Iteration Formulae}
In \cite{LZZ}\cite{LZ}, the authors defined indices $\mu_1, \mu_2, i_{L_0}, i_{L_1}, i_{\sqrt{-1}}^{L_0}, i_{\sqrt{-1}}^{L_1},$ and they satisfy following equations
\begin{align}
&\mu_1(\beta)= \frac{1}{2} + i_{L_0}(\beta), \mu_2(\beta)= \frac{1}{2} + i_{L_1}(\beta)\\
&\mu_1(\beta^2)= \frac{1}{2} + i_{L_0}(\beta)+i_{\sqrt{-1}}^{L_0}(\beta), \mu_2(\beta^2)= \frac{1}{2} + i_{L_1}(\beta) + i_{\sqrt{-1}}^{L_1}(\beta)
\end{align}
They also established the following theorems, \cite[Theorem 1.3, Theorem 4.2]{LZ}
\begin{theo}\label{LZ}
	Let $\beta$ be a brake orbit and $\Psi_{\beta}(t)$ be the corresponding symplectic path which is iterated after the first half period $\Psi_{\beta}(t) = N\Psi_{\beta}(\tau - t)\Psi_{\beta}(\frac{\tau}{2})^{-1}N\Psi_{\beta}(\frac{\tau}{2}), t \in [\frac{\tau}{2}, \tau], \Psi_{\beta}(t + \tau) = \Psi_{\beta}(t)\Psi_{\beta}(\tau)$.

When $k$ is odd, there holds
\begin{equation}
i_{L_0}(\beta^k) = i_{L_0}(\beta) + \sum_{i=1}^{\frac{k-1}{2}}i_{\omega_k^{2i}}(\beta)
\end{equation}

When $k$ is even, there holds
\begin{align}
i_{L_0}(\beta^k) = i_{L_0}(\beta) + i^{L_0}_{\sqrt{-1}}(\beta) + \sum_{i=1}^{\frac{k}{2}-1}i_{\omega_k^{2i}}(\beta)
\end{align}
where $\omega_k = e^{\pi\sqrt{-1}/ k}$, and $i_{\omega}$ is the index function introduced in \cite[page 130 Definition 3]{Long}.
\end{theo}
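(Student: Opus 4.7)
The plan is to reduce the $L_0$-Lagrangian Maslov-type index of the $k$-fold iterated path $\Psi_{\beta^k}$ to a sum of $\omega$-indices of $\Psi_{\beta}$ evaluated at the $2k$-th roots of unity, in the spirit of Bott's classical iteration formula but adapted to the brake structure imposed by the anticontact involution $N$.

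First I would write the iteration explicitly. Setting $M = \Psi_{\beta}(\tau/2)$, the hypothesis gives $\Psi_{\beta}(\tau) = N M^{-1} N M$, and therefore $\Psi_{\beta^k}(\tau) = (N M^{-1} N M)^{k}$. Because $N^2 = Id$, this product admits a natural length-$2k$ factorization into alternating copies of $M$ and $N M^{-1} N$, so the full iterated path $\Psi_{\beta^k}|_{[0,k\tau]}$ is generated by $2k$ concatenated copies of the half-period path, alternately conjugated by $N$. This suggests building an auxiliary $2k$-fold product path on $({\bf R}^{2n})^{2k}$ whose $L_0$-Maslov index equals that of $\Psi_{\beta^k}$, and introducing the cyclic shift operator acting on the factors.

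Next I would diagonalize this shift via the discrete Fourier transform so that the pair $(L_0, \Psi_{\beta^k}(\cdot) L_0)$ decomposes into spectral blocks labeled by the $2k$-th roots of unity $\omega_k^{2i}$, with $0 \leq i \leq k$. The involution $N$ imposes a reality condition identifying $\omega_k^{2i}$ with its complex conjugate, collapsing the sum to $0 \leq i \leq \lfloor k/2 \rfloor$. The endpoint $i = 0$ yields the base term $i_{L_0}(\beta)$, the interior range $1 \leq i \leq \lfloor (k-1)/2 \rfloor$ produces the stated sum of $i_{\omega_k^{2i}}(\beta)$ terms, and when $k$ is even, the remaining block at $i = k/2$, where $\omega_k^{2i} = -1$, after the $N$-induced real/imaginary splitting becomes the index at the spectral parameter $\sqrt{-1}$, yielding the extra term $i_{\sqrt{-1}}^{L_0}(\beta)$. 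This dichotomy is exactly what distinguishes the even and odd cases of the statement.

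The main obstacle will be identifying the $i = k/2$ block with $i_{\sqrt{-1}}^{L_0}(\beta)$ rather than with a naive $i_{-1}^{L_0}(\beta)$. This requires a careful homotopy within the space of admissible Lagrangian paths showing that, after restriction to the $(-1)$-eigenspace of the cyclic shift and splitting under $N$, the relevant Lagrangian pair is conjugate to one governing the Long-type $\omega$-index at $\omega = \sqrt{-1}$. Once this identification is in place, the remaining bookkeeping follows from the additivity and homotopy invariance of the Robbin-Salamon index together with the Bott-type iteration formula for $\omega$-indices in \cite{Long}.
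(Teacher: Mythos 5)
The paper does not actually prove this statement; Theorems \ref{LZ} and \ref{LZ1} are quoted verbatim from Liu--Zhang \cite[Theorem 1.3, Theorem 4.2]{LZ}, and the text of the present paper only applies them. So there is no ``paper's own proof'' to compare against, and the proposal should be judged against the Liu--Zhang argument.

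Your general strategy --- a Bott-type root-of-unity decomposition adapted to the brake symmetry --- is indeed the right kind of argument, and the intuition that the $N$-symmetry roughly halves the classical Bott sum is correct (for odd $k$ the formula is precisely $i_1(\beta)$ replaced by $i_{L_0}(\beta)$ with a single copy of each interior term, rather than the two copies one would get from $i_\omega = i_{\bar\omega}$ in the Bott formula for $i_1(\beta^k)$). However, the sketch as written has two concrete problems. First, $i_{L_0}(\beta^k)$ is the Robbin--Salamon index of the path $\Psi_\beta\vert_{[0,\,k\tau/2]}$ relative to $L_0$ at both ends --- a concatenation of $k$ half-period segments --- whereas you factorize $\Psi_{\beta^k}(\tau)=(NM^{-1}NM)^k$ and work on a $2k$-fold product over $[0,k\tau]$. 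That is an index of a different object; you would need an additional folding step to descend from the $[0,k\tau]$ Fourier decomposition to the half-period $L_0$-boundary-value index, and that step is not indicated. Second, the spectral parameters appearing in the theorem, $\omega_k^{2i}=e^{2\pi\sqrt{-1}\,i/k}$, are $k$-th roots of unity, and $i_{\omega}$ there is the $\omega$-index of the \emph{full}-period path $\Psi_\beta\vert_{[0,\tau]}$ in the sense of \cite[page 130 Definition 3]{Long}; a DFT over $\mathbf{Z}/2k$ on your $2k$ alternating factors produces $2k$-th roots, and collapsing them by ``reality'' does not by itself land on $i_{\omega_k^{2i}}$ evaluated on the full-period path --- that identification has to be made through the Lagrangian boundary data ($L_0$ at both endpoints of each segment) rather than through an $N$-reality condition on the DFT. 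Finally, the step you flag as ``the main obstacle'' --- that the central block at $\omega=-1$ in the even case is governed by $i^{L_0}_{\sqrt{-1}}(\beta)$ and not by $i_{-1}(\beta)$ --- is precisely the place where the brake-orbit-specific indices $i^{L_0}_{\sqrt{-1}}, i^{L_1}_{\sqrt{-1}}$ enter and where the Liu--Zhang machinery (their splitting lemmas for $L_0$-paths) is doing the real work; deferring it as a homotopy argument leaves the proof essentially unfinished, since without it you cannot distinguish the even and odd cases of the formula.
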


\begin{theo}\label{LZ1}
	Let $\beta$ be a brake orbit and $\Psi_{\beta}(t)$ be the corresponding symplectic path which is iterated after the first half period by $\Psi_{\beta}(t) = N\Psi_{\beta}(\tau - t)\Psi_{\beta}(\frac{\tau}{2})^{-1}N\Psi_{\beta}(\frac{\tau}{2}), t \in [\frac{\tau}{2}, \tau], \Psi_{\beta}(t + \tau) = \Psi_{\beta}(t)\Psi_{\beta}(\tau)$.

When $k$ is odd, there holds
\begin{equation}
i_{L_1}(\beta^k) = i_{L_1}(\beta) + \sum_{i=1}^{\frac{k-1}{2}}i_{\omega_k^{2i}}(\beta)
\end{equation}

When $k$ is even, there holds
\begin{align}
i_{L_1}(\beta^k) = i_{L_1}(\beta) + i^{L_1}_{\sqrt{-1}}(\beta) + \sum_{i=1}^{\frac{k}{2}-1}i_{\omega_k^{2i}}(\beta)
\end{align}
where $\omega_k = e^{\pi\sqrt{-1}/ k}$, and $i_{\omega}$ is the index function introduced in \cite[page 130 Definition 3]{Long}.
\end{theo}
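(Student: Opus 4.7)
The plan is to mirror the proof of Theorem \ref{LZ} essentially verbatim, with $L_0$ replaced by $L_1$ throughout. All three ingredients used in the $L_0$ argument are independent of which transverse Lagrangian is chosen: (a) the brake-orbit iteration identity $\Psi_\beta(t) = N\Psi_\beta(\tau-t)\Psi_\beta(\tau/2)^{-1}N\Psi_\beta(\tau/2)$ for $t \in [\tau/2,\tau]$ together with its periodic extension is a structural property of $\Psi_\beta$ alone; (b) naturality, homotopy invariance and concatenation additivity of the Robbin-Salamon index $\mu^{RS}$ hold for any pair of Lagrangian paths; and (c) Long's $\omega$-index $i_\omega$ depends on the monodromy $\Psi_\beta(\tau)$ and $\omega \in S^1$ only, not on any Lagrangian subspace.

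Concretely, I would first write the restriction $\Psi_{\beta^k}\vert_{[0, k\tau/2]}$ of the $k$-fold iterate as a concatenation of $k$ segments, each conjugate to $\Psi_\beta\vert_{[0,\tau/2]}$ by a product of $N$ and the monodromy $\Psi_\beta(\tau)^j$ via identity (a). Using (b), I then split $\mu^{RS}(L_1, \Psi_{\beta^k}(\cdot)L_1)$ into the fundamental half-period contribution $i_{L_1}(\beta)$ together with ``bridge'' contributions that record the $L_1$-crossings accumulated at the discrete instants $j\tau/2$. A spectral decomposition of $\Psi_\beta(\tau)$ finally identifies the bridge attached to exponent $i$ with $i_{\omega_k^{2i}}(\beta)$, the sums $\sum_{i=1}^{(k-1)/2}$ for odd $k$ and $\sum_{i=1}^{k/2-1}$ for even $k$ exhausting the distinct conjugate pairs of non-trivial $k$-th roots of unity.

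For the even-$k$ case, the central exponent $i = k/2$ corresponds to $\omega_k^{k/2} = \sqrt{-1}$; here the Lagrangian genuinely enters, and since this bridge involves the pairing of the $\sqrt{-1}$-eigenspace of the monodromy with $L_1$, its contribution is by definition $i^{L_1}_{\sqrt{-1}}(\beta)$, producing the extra summand in the formula.

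The main obstacle, were one to redo the Zhang-Liu argument from scratch, is the spectral decomposition step: one must perturb $\Psi_\beta(\tau)$ to a generic symplectic matrix, control how each $\omega_k^{2i}$-eigenspace is intertwined with $L_1$ across the bridge, and verify that the resulting crossing numbers collapse to $i_{\omega_k^{2i}}(\beta)$ for $\omega_k^{2i} \neq \sqrt{-1}$ and to $i^{L_1}_{\sqrt{-1}}(\beta)$ at the distinguished $\omega = \sqrt{-1}$. The delicate point is confirming that the ``non-$\sqrt{-1}$'' bridges are genuinely Lagrangian-independent, but this follows because $i_\omega(\beta)$ is defined intrinsically through the graph Maslov index and does not see the individual $L_0$, $L_1$ splitting; it is precisely at $\omega = \sqrt{-1}$ that this intrinsic definition degenerates and the Lagrangian must be re-introduced through $i^{L_j}_{\sqrt{-1}}$.
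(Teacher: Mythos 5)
The paper does not give a proof of this statement: both Theorem~\ref{LZ} and Theorem~\ref{LZ1} are imported verbatim from Zhang--Liu (the text explicitly says ``They also established the following theorems, [LZ, Theorem 1.3, Theorem 4.2]'' and then simply records them). So there is no proof in the paper against which to compare your argument; the paper's ``proof'' is a citation.

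As a reconstruction of how Zhang--Liu's argument for $i_{L_0}$ transfers to $i_{L_1}$, your plan is reasonable and identifies the right structural points. In particular, the step that makes the $L_0 \leftrightarrow L_1$ swap go through without friction is the fact that the anti-involution $N$ (in the standard form $N_0 = \mathrm{diag}(-1,1)$) preserves both $L_0 = \{0\}\times\mathbf{R}^n$ and $L_1 = \mathbf{R}^n\times\{0\}$, so the brake-orbit iteration identity $\Psi_\beta(t) = N\Psi_\beta(\tau - t)\Psi_\beta(\tau/2)^{-1}N\Psi_\beta(\tau/2)$ behaves symmetrically with respect to the two boundary Lagrangians; and the fact that $i_\omega$ is defined through the path $\Psi_\beta([0,\tau])$ alone, with no reference to a fixed Lagrangian, so the ``bridge'' contributions at $\omega_k^{2i}\neq\sqrt{-1}$ are genuinely Lagrangian-blind while the marginal $\omega=\sqrt{-1}$ term must be re-expressed through $i^{L_1}_{\sqrt{-1}}$. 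One small imprecision: you say $i_\omega$ depends on ``the monodromy $\Psi_\beta(\tau)$ and $\omega$ only,'' but it depends on the homotopy class of the entire path, not just the endpoint; this does not affect the argument because the whole path is what is being iterated. The step you flag as delicate --- verifying that the spectral ``bridges'' collapse to $i_{\omega_k^{2i}}(\beta)$ and, at $\omega=\sqrt{-1}$, to $i^{L_1}_{\sqrt{-1}}(\beta)$ --- is indeed the content that would need to be checked against Zhang--Liu's actual computation; as written your argument is a sketch and that step remains a gap, but it is the right gap to acknowledge and the intended approach is the correct one.
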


For a contact manifold in dimension $3$, we have $\Psi_\beta(t) \in \mathrm{Sp}(2)$. We assume $\beta$ to be nondegenerate, so $\beta$ is a hyperbolic or an irrational elliptic periodic orbit.

When $\beta$ is hyperbolic with period $\tau$, we have $i_\omega(\beta) = i(\beta) = \mu_{CZ}(\beta)$, for any $\omega = e^{i\theta}$. Because we have the following formula
$$i_\omega(\beta) = i(\beta) + \Sigma_{0 \leq \theta < \theta_0}S^+_{\Phi_\beta(\tau)} - \Sigma_{0 < \theta \leq \theta_0}S^-_{\Phi_\beta(\tau)}$$
where $\omega = e^{i\theta}$, and $S^{\pm}_M$ is the splitting number, which is defined in \cite[\S 9.1 Definition 4]{Long}. For the above formula see \cite[Equation (3)]{Long}. The splitting number $S_{\Phi_\beta(\tau)}$ is $0$, by our hyperbolic assumption, see \cite[page 199, List 12 $\langle8\rangle$]{Long}. And our result follows.

By the above theorems \ref{LZ} \ref{LZ1} we have,
\begin{equation}
i_{L_0}(\beta^k) =
\begin{cases}
i_{L_0}(\beta) + \frac{k-1}{2}\mu_{CZ}(\beta), &k\ \text{is odd}; \\
i_{L_0}(\beta) + i^{L_0}_{\sqrt{-1}}(\beta) + (\frac{k}{2}-1)\mu_{CZ}(\beta), &k\ \text{is even}.
\end{cases}
\end{equation}

We consider the relation between $\mu_1, \mu_2$ and $\mu_{CZ}$. From Long, Zhang, Zhu's paper \cite[Proposition C]{LZZ}, we know $\mu_{CZ}(\beta^k) = \mu_1(\beta^k) + \mu_2(\beta^k)$. Therefore from Theorem \ref{LZ}, we have
\begin{align}
&\mu_1(\beta)= \frac{1}{2} + i_{L_0}(\beta), \mu_2(\beta)= \frac{1}{2} + i_{L_1}(\beta)\\
&\mu_1(\beta^2)= \frac{1}{2} + i_{L_0}(\beta)+i_{\sqrt{-1}}^{L_0}(\beta), \mu_2(\beta^2)= \frac{1}{2} + i_{L_1}(\beta) + i_{\sqrt{-1}}^{L_1}(\beta)\\
&\mu_{CZ}(\beta) = 1 + i_{L_0}(\beta) + i_{L_1}(\beta), \\
&\mu_{CZ}(\beta^2) = 1 + i_{L_0}(\beta) + i_{\sqrt{-1}}^{L_0}(\beta) + i_{L_1}(\beta) + i_{\sqrt{-1}}^{L_1}(\beta)
\end{align}

From \cite[Theorem 2.3 and Equation (3.21)]{LZ}, we know the inequalities $|\mu_1(\beta^k) - \mu_2(\beta^k)| \leq 1$ and $i_{L_0}(\beta) \leq i_{\sqrt{-1}}^{L_0}(\beta) \leq i_{L_0}(\beta) + 1$, $i_{L_1}(\beta) \leq i_{\sqrt{-1}}^{L_1}(\beta) \leq i_{L_1}(\beta) + 1$.

When $\beta$ is hyperbolic, $\mu_{CZ}(\beta)$ can be an odd or even number. Considering the case $\mu_{CZ}(\beta)$ is an odd number, $i_{L_0}$ and $i_{L_1}$ must be equal, because $i_{L_0}(\beta)$ and $i_{L_1}(\beta)$ can not be an odd and a even number, and the relation $|i_{L_0} - i_{L_1}| \leq 1$. Because $\mu_{CZ}(\beta^2) = 2\mu_{CZ}(\beta) = 2(1 + i_{L_0}(\beta) + i_{L_1}(\beta))$, we have
\begin{align*}
1 + i_{L_0}(\beta) + i_{\sqrt{-1}}^{L_0}(\beta) + i_{L_1}(\beta) + i_{\sqrt{-1}}^{L_1}(\beta) &= 2(1 + i_{L_0}(\beta) + i_{L_1}(\beta))\\
i_{\sqrt{-1}}^{L_0}(\beta) + i_{\sqrt{-1}}^{L_1}(\beta) &= 1 + i_{L_0}(\beta) + i_{L_1}(\beta)
\end{align*}
Then we get
\begin{equation}\label{11}
{\sqrt{-1}}^{L_0}(\beta) = i_{L_0}(\beta) +1, i_{\sqrt{-1}}^{L_1}(\beta) = i_{L_0}(\beta)
\end{equation}
or
\begin{equation}\label{12}
i_{\sqrt{-1}}^{L_0}(\beta) = i_{L_0}(\beta), i_{\sqrt{-1}}^{L_1}(\beta) = i_{L_0}(\beta) +1
\end{equation}

Using the same method, if $\mu_{CZ}(\beta)$ is an even number, we know
\begin{equation}\label{13}
i_{L_1}(\beta) = i_{L_0}(\beta) \pm 1
\end{equation}
\begin{equation}\label{14}
i_{\sqrt{-1}}^{L_0}(\beta) = i_{\sqrt{-1}}^{L_1}(\beta)
\end{equation}

And when $\beta$ is elliptic, $\mu_{CZ}$ is an odd number, by the same reason as in hyperbolic negative, we always have $\mu_1(\beta^k) = \frac{1}{2}\mu_{CZ}(\beta^k)$. After elementary analysis, we can list all cases as follows:

When $\beta$ is elliptic, $\beta = R(\theta')$. Since $\mu_{CZ}(\beta^{k}) = 2[k\theta] + 1$ is always odd, we know $\mu_1(\beta^k) = \mu_2(\beta^k) = [k\theta] + \frac{1}{2}$.

When $\beta$ is hyperbolic, both positive or negative hyperbolic cases have two subcases according to the sign of $\mu_1(\beta^2) - \mu_2(\beta^2)$. We call the case $\mu_1(\beta^2) - \mu_2(\beta^2) > 0$ type one; the case $\mu_1(\beta^2) - \mu_2(\beta^2) < 0$ type two. The difference $\mu_1(\beta) - \mu_2(\beta)$ is given by the H\"{o}rmander index in the proof of \cite[Theorem 3.3]{LZZ}. Using equations (\ref{11}) (\ref{12}), we have:

Negative hyperbolic type one
\begin{align}
&i_{L_0}(\beta), i_{L_1}(\beta) = i_{L_0}(\beta)\\
&i_{\sqrt{-1}}^{L_0}(\beta) = i_{L_0}(\beta) + 1, i_{\sqrt{-1}}^{L_1}(\beta) = i_{L_0}(\beta)\\
&\mu_{CZ}(\beta) = 1 + 2i_{L_0}(\beta)\\
& \mu_1(\beta) = \frac{1}{2} + i_{L_0}(\beta), \mu_1(\beta^2) = \frac{3}{2} + 2i_{L_0}(\beta)\\
&\mu_1(\beta^k) = \begin{cases}\frac{1}{2} + i_{L_0}(\beta) + \frac{k-1}{2}(1 + 2i_{L_0}(\beta)) = k\mu_1(\beta), &k\ \text{is odd}\\
\frac{1}{2} + i_{L_0}(\beta) + i_{L_0}(\beta) + 1 + (\frac{k}{2}-1)(1 + 2i_{L_0}(\beta)) = k\mu_1(\beta) + \frac{1}{2}, &k\ \text{is even}
	\end{cases}\label{hn1}
\end{align}

Negative hyperbolic type two
\begin{align}
&i_{L_0}(\beta), i_{L_1}(\beta) = i_{L_0}(\beta)\\
&i_{\sqrt{-1}}^{L_0}(\beta) = i_{L_0}(\beta), i_{\sqrt{-1}}^{L_1}(\beta) = i_{L_0}(\beta) + 1\\
&\mu_{CZ}(\beta) = 1 + 2i_{L_0}(\beta)\\
& \mu_1(\beta) = \frac{1}{2} + i_{L_0}(\beta), \mu_1(\beta^2) = \frac{1}{2} + 2i_{L_0}(\beta)\\
&\mu_1(\beta^k) = \begin{cases}\frac{1}{2} + i_{L_0}(\beta) + \frac{k-1}{2}(1 + 2i_{L_0}(\beta)) = k\mu_1(\beta), &k\ \text{is odd}\\
\frac{1}{2} + i_{L_0}(\beta) + i_{L_0}(\beta) + (\frac{k}{2}-1)(1 + 2i_{L_0}(\beta)) = k\mu_1(\beta) - \frac{1}{2}, &k\ \text{is even}
\end{cases}\label{hn2}
\end{align}

Using equations (\ref{13}), (\ref{14}) we have:

Positive hyperbolic type one
\begin{align}
&i_{L_0}(\beta), i_{L_1}(\beta) = i_{L_0}(\beta) - 1\\
&i_{\sqrt{-1}}^{L_0}(\beta) = i_{L_0}(\beta), i_{\sqrt{-1}}^{L_1}(\beta) = i_{L_0}(\beta)\\
&\mu_{CZ}(\beta) = 2i_{L_0}(\beta)\\
& \mu_1(\beta) = \frac{1}{2} + i_{L_0}(\beta), \mu_1(\beta^2) = \frac{1}{2} + 2i_{L_0}(\beta)\\
&\mu_1(\beta^k) = \begin{cases}\frac{1}{2} + i_{L_0}(\beta) + \frac{k-1}{2}(2i_{L_0}(\beta)) = k\mu_1(\beta) + \frac{1-k}{2}, &k\ \text{is odd}\\
\frac{1}{2} + i_{L_0}(\beta) + i_{L_0}(\beta) + (\frac{k}{2}-1)(2i_{L_0}(\beta)) = k\mu_1(\beta) + \frac{1-k}{2}, &k\ \text{is even}
\end{cases}\\
&\mu_1(\beta^k) = k\mu_1(\beta) + \frac{1-k}{2}\label{hp1}
\end{align}

Positive hyperbolic type two
\begin{align}
&i_{L_0}(\beta), i_{L_1}(\beta) = i_{L_0}(\beta) + 1\\
&i_{\sqrt{-1}}^{L_0}(\beta) = i_{L_0}(\beta) + 1, i_{\sqrt{-1}}^{L_1}(\beta) = i_{L_0}(\beta) + 1\\
&\mu_{CZ}(\beta) = 2 + 2i_{L_0}(\beta)\\
& \mu_1(\beta) = \frac{1}{2} + i_{L_0}(\beta), \mu_1(\beta^2) = \frac{3}{2} + 2i_{L_0}(\beta)\\
&\mu_1(\beta^k) = \begin{cases}\frac{1}{2} + i_{L_0}(\beta) + \frac{k-1}{2}(2 + 2i_{L_0}(\beta)) = k\mu_1(\beta) + \frac{k - 1}{2}, &k\ \text{is odd}\\
\frac{1}{2} + i_{L_0}(\beta) + i_{L_0}(\beta) +1 + (\frac{k}{2}-1)(2+2i_{L_0}(\beta)) = k\mu_1(\beta) + \frac{k - 1}{2}, &k\ \text{is even}
\end{cases}\\
&\mu_1(\beta^k) = k\mu_1(\beta) + \frac{k - 1}{2}\label{hp2}
\end{align}

\section{Real ECH lemma and Real ECH inequality}

In \cite{HT1}, a fundamental lemma for ECH is \cite[Lemma 1.7]{HT1}
\begin{lemma}
	If $u$ is a pseudoholomorphic curve in ${\bf R} \times Y$ which is a branched cover of a trivial cylinder, then $ind(u) \geq 0$.
\end{lemma}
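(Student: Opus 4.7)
My plan is to compute the Fredholm index directly using the structure of the branched cover, and then split into cases based on whether $\alpha$ (the underlying orbit of the trivial cylinder) is hyperbolic or elliptic. Since $u$ factors as $u_0 \circ g$ for the trivial cylinder $u_0 : {\bf R} \times S^1 \to {\bf R} \times Y$ and a $d$-fold branched cover $g$, the pullback $u^*\xi$ is trivial in the trivialization along $\alpha$, so $c_1(u^*\xi) = 0$. A Riemann--Hurwitz calculation on the compactification identifies $-\chi(\Sigma)$ with the total interior branching $B$ of $g$. Letting the positive ends have multiplicities $(m_1, \dots, m_a)$ and the negative ends have multiplicities $(n_1, \dots, n_b)$, both summing to $d$, the Fredholm index formula cited in the introduction reduces to
\[
ind(u) = B + \sum_{i=1}^a \mu_{CZ}(\alpha^{m_i}) - \sum_{j=1}^b \mu_{CZ}(\alpha^{n_j}).
\]

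If $\alpha$ is hyperbolic then $\mu_{CZ}(\alpha^k) = k \mu_{CZ}(\alpha)$, so the two Conley--Zehnder sums telescope and cancel because $\sum m_i = \sum n_j = d$. Hence $ind(u) = B \geq 0$ immediately.

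The elliptic case is the substantive one. I would substitute $\mu_{CZ}(\alpha^k) = 2\lfloor k\theta\rfloor + 1$ with $\theta$ irrational to rewrite the index as $B + (a - b) + 2\bigl(\sum_i \lfloor m_i\theta\rfloor - \sum_j \lfloor n_j\theta\rfloor\bigr)$. Two elementary ingredients then do the work. First, subadditivity of the floor function gives $\sum_j \lfloor n_j\theta\rfloor \leq \lfloor d\theta\rfloor$, while combining $\lfloor m_i\theta\rfloor > m_i\theta - 1$ with integrality and the irrationality of $d\theta$ gives $\sum_i \lfloor m_i\theta\rfloor \geq \lfloor d\theta\rfloor - a + 1$. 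Together this bounds the bracketed difference below by $2 - a - b$. Second, applying $\chi(\Sigma) = 2 - 2g - (a+b) = -B$ to each connected component of $\Sigma$ forces $B \geq a + b - 2$ from $g \geq 0$. Adding these two bounds yields $ind(u) \geq 0$ on each component, and additivity of the index over components gives the full statement.

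The main obstacle is precisely that neither the combinatorial bound on the Conley--Zehnder difference nor the topological bound on $B$ is strong enough on its own: both depend on the same combination $a + b$, and they cancel only when added. For this reason the argument must be carried out componentwise before summing; trying to handle a disconnected cover in a single step would lose the needed control on genus.
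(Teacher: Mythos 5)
This lemma is not proved in the paper itself; it is quoted from Hutchings--Taubes as background motivating the Real analogue, Theorem~3.1. Your argument is correct and matches the standard ECH proof, which is also the blueprint the paper adapts in the Real case: trivialize $\xi$ along $\alpha$ so that $c_1=0$, apply Riemann--Hurwitz to the compactified cover to get $-\chi(\Sigma)=B$, note that in the hyperbolic case the Conley--Zehnder sums cancel because $\mu_{CZ}(\alpha^k)=k\,\mu_{CZ}(\alpha)$ and the positive and negative multiplicities each sum to $d$, and in the elliptic case combine the floor/ceiling estimate with $-\chi = 2g-2+(a+b)$. You are right that the topological bound $B\geq a+b-2$ is only valid per component (a $c$-component cover only gives $B\geq a+b-2c$), so the elliptic estimate must be applied componentwise and the componentwise indices summed; your proof does handle this correctly even though the displayed inequality reads as if it were global. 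The paper's proof of the Real version follows the same outline, but there the hyperbolic case is no longer a trivial cancellation because $\mu_1$ is not linear under iteration; this is what produces the four-subcase split and the $k'\equiv l'\pmod 2$ parity discussion in the proof of Theorem~3.1, none of which has a counterpart in the ordinary statement you are proving.
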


Here we give the Real counterpart of this lemma,
\begin{theo}\label{RECHLE}
If $u$ is a Real pseudoholomorphic curve in ${\bf R} \times Y$ which is a Real branched cover of a Real trivial cylinder ${\bf R} \times \beta$, where $\beta$ is a brake orbit, then $ind_R(u) \geq 0$.
\end{theo}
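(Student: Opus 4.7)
The plan is to substitute the iteration formulae of Section 2 into the Real virtual dimension formula and verify $ind_R(u)\geq 0$ in each of the five cases for $\beta$. Two simplifications apply throughout. First, since $u$ is a cover of the trivial cylinder ${\bf R}\times\beta$, trivializing $\xi$ along $\beta$ gives $c_1(u(\Sigma))=0$. Second, Riemann--Hurwitz applied to the branched cover $g\colon\Sigma\to{\bf R}\times\beta$ yields $\chi(\Sigma)=-B$, where $B$ is the total branch number. Writing the positive and negative brake asymptotic multiplicities as $a_1,\dots,a_p$ and $b_1,\dots,b_q$, and the positive and negative paired periodic multiplicities as $c_1,\dots,c_r$ and $d_1,\dots,d_s$, one has the degree identity $D=\sum a_i+2\sum c_k=\sum b_j+2\sum d_l$, and the formula becomes
\begin{equation*}
ind_R(u)=\tfrac{B}{2}+\sum_{i=1}^{p}\mu_1(\beta^{a_i})-\sum_{j=1}^{q}\mu_1(\beta^{b_j})+\sum_{k=1}^{r}\mu_{CZ}(\beta^{c_k})-\sum_{l=1}^{s}\mu_{CZ}(\beta^{d_l}).
\end{equation*}

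In each of the four hyperbolic cases $\mu_{CZ}(\beta^k)$ is linear in $k$ and $\mu_1(\beta^k)-k\mu_1(\beta)$ is a half-integer correction determined only by the parity of $k$. After substituting the iteration formulae, the degree identity causes every term proportional to $\mu_1(\beta)$ to cancel between positive and negative ends, leaving $ind_R(u)=\tfrac{1}{2}(B+\delta)$ for a simple linear combination $\delta$ of $p,q$ and the counts $e_\pm$ of even-multiplicity brake ends on the positive/negative sides (for instance $\delta=e_+-e_-$ in the negative hyperbolic type one case). Combining Riemann--Hurwitz $B=2g-2+p+2r+q+2s$ with the parity identity $p-e_+\equiv q-e_-\pmod{2}$ (a consequence of $\sum a_i\equiv D\equiv\sum b_j\pmod{2}$) and the connectedness constraints $p+2r\geq 1$, $q+2s\geq 1$ gives $B+\delta\geq 0$ in every subcase: the positive hyperbolic cases reduce to the direct inequality $g+p+r+s\geq 1$ (resp.\ $g+q+r+s\geq 1$), while the negative hyperbolic cases use the parity identity to exclude the corner configurations (such as $g=r=s=0$, $p=1$ with all negative multiplicities even) in which $\delta$ would otherwise be too negative.

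For the elliptic case, substituting $\mu_1(\beta^k)=\lfloor k\theta\rfloor+\tfrac{1}{2}$ and $\mu_{CZ}(\beta^k)=2\lfloor k\theta\rfloor+1$ reduces the desired inequality to
\begin{equation*}
\tfrac{B}{2}+\tfrac{p-q}{2}+(r-s)+\sum_i\lfloor a_i\theta\rfloor+2\sum_k\lfloor c_k\theta\rfloor-\sum_j\lfloor b_j\theta\rfloor-2\sum_l\lfloor d_l\theta\rfloor\geq 0.
\end{equation*}
Counting each paired end twice, the positive (resp.\ negative) floor sum comprises $p+2r$ (resp.\ $q+2s$) terms $\lfloor m\theta\rfloor$ with arguments summing to $D$. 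Applying the elementary bound $\lfloor D\theta\rfloor-(M-1)\leq\sum_{i=1}^M\lfloor m_i\theta\rfloor\leq\lfloor D\theta\rfloor$ from below on the positive side and from above on the negative side cancels the two $\lfloor D\theta\rfloor$ terms, and substituting $B=2g-2+p+2r+q+2s$ collapses the residual constant to $g\geq 0$.

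The main obstacle will be the negative hyperbolic cases, in which the proof genuinely needs the parity identity on top of Riemann--Hurwitz: without it, the naive bound would allow $\delta$ as negative as $-q$ while $B=0$, producing a spurious negative index. Once the parity-based case split disposes of those edge configurations, all remaining cases follow from elementary arithmetic.
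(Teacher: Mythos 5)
Your proposal is correct and follows essentially the same route as the paper's proof: reduce to $\text{ind}_R(u)=\frac{B}{2}+\sum\mu_1(\beta^{a_i})-\sum\mu_1(\beta^{b_j})+\sum\mu_{CZ}(\beta^{c_k})-\sum\mu_{CZ}(\beta^{d_l})$, substitute the Section 2 iteration formulae, use the total-multiplicity identity to cancel the $\mu_1(\beta)$ terms, invoke the parity of that identity to rule out the half-integer corner cases in the negative hyperbolic types, and handle the elliptic case by the floor/ceiling superadditivity estimate. The only cosmetic difference is that you package $-\tfrac{1}{2}\chi(\Sigma)$ as $\tfrac{B}{2}$ via Riemann--Hurwitz, whereas the paper keeps it as $\tfrac{1}{2}(2g+k+l+2m+2n-2)$; the content is the same.
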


\begin{proof}

Let $u$ be a Real pseudoholomorphic curve which has $k$ positive symmetric punctures, $l$ negative symmetric punctures, $m$ pairs of nonsymmetric positive punctures, and $n$ pairs of nonsymmetric negative punctures. The punctures are of covering multiplicities $a_1, \ldots, a_{k'}, a_{k'+1}, \ldots, a_{k}; b_1, \ldots, b_{l'}, b_{l'+1}, \ldots, b_{l}; c_1, \ldots, c_{m'},\\ c_{m'+1}, \ldots, c_{m}; d_1, \ldots, d_{n'}, d_{n'+1}, \ldots, d_{n}$. At the same time $a_1, \ldots, a_{k'}; b_1, \ldots, b_{l'};\\ c_1, \ldots, c_{m'}; d_1, \ldots, d_{n'}$ are odd, $a_{k'+1}, \ldots, a_{k}; b_{l'+1}, \ldots, b_{l};  c_{m'+1}, \ldots, c_{m};\\  d_{n'+1}, \ldots, d_{n}$ are even. They satisfy the relation
\begin{equation}\label{sum}
a_1 + \cdots a_k + 2c_1 + \cdots 2c_m = b_1 + \cdots b_l + 2d_1 + \cdots + 2d_n
\end{equation}
Thus from \cite[Main Theorem]{ZZ}, we have the result,

\begin{multline}
\text{ind}_R(u) = -\frac{1}{2}(2-2g-k-l-2m-2n) + \sum_{i = 1}^{k}\mu_1(\beta^{a_i}) - \sum_{j = 1}^{l}\mu_1(\beta^{b_j}) \\+ \sum_{p = 1}^{m}\mu_{CZ}(\beta^{c_p}) - \sum_{q = 1}^{n}\mu_{CZ}(\beta^{d_q})
\end{multline}

Thanks to the index iteration formulae, we have:

If $\beta$ is hyperbolic,
\begin{align*}
&\text{ind}_R(u)= -\frac{1}{2}(2-2g-k-l-2m-2n) \\
&+ \sum_{i=1}^{k'}(\mu_1(\beta) + \frac{a_i-1}{2}\mu_{CZ}(\beta)) + \sum_{i=k'+1}^{k}(\mu_1(\beta^2) + (\frac{a_i}{2}-1)\mu_{CZ}(\beta))\\
&- \sum_{j=1}^{l'}(\mu_1(\beta) + \frac{b_j-1}{2}\mu_{CZ}(\beta)) - \sum_{j=l'+1}^{l}(\mu_1(\beta^2) + (\frac{b_j}{2}-1)\mu_{CZ}(\beta))\\
&+ \sum_{p = 1}^{m}c_p\mu_{CZ}(\beta) - \sum_{q = 1}^{n}d_q\mu_{CZ}(\beta)\\
&= -\frac{1}{2}(2-2g-k-l-2m-2n)\\
& +k'(\mu_1(\beta) - \frac{\mu_{CZ}(\beta)}{2}) + (k - k')(\mu_1(\beta^2) - \mu_{CZ}(\beta))\\
& -l'(\mu_1(\beta) - \frac{\mu_{CZ}(\beta)}{2}) - (l - l')(\mu_1(\beta^2) - \mu_{CZ}(\beta))\\
&+(\sum_{i=1}^{k}\frac{a_i}{2} - \sum_{j=1}^{k}\frac{b_j}{2} + \sum_{p=1}^{m}c_p - \sum_{q=1}^{n}d_q)\mu_{CZ}(\beta)\\
& =\frac{1}{2}(2g+k+l+2m+2n-2) + (k'-l')(\mu_1(\beta) - \frac{\mu_{CZ}(\beta)}{2})\\
& +((k-k')-(l-l'))(\mu_1(\beta) - \mu_{CZ}(\beta))
\end{align*}
In the last equality, we used the relation of equation (\ref{sum}).

Let $\epsilon_1 = \mu_1(\beta) - \frac{\mu_{CZ}(\beta)}{2}$, $\epsilon_2 = \mu_1(\beta^2) - \mu_{CZ}(\beta)$. From our list of iteration formulae, we have:

\begin{enumerate}
	\item hyperbolic type one $\epsilon_1 = 0, \epsilon_2 = \frac{1}{2}$, $ind_R(u) = \frac{1}{2}(2g+k+l+2m+2n-2 + (k-k') - (l-l'))$
	\item hyperbolic negative type two $\epsilon_1 = 0, \epsilon_2 = -\frac{1}{2}$, $ind_R(u) = \frac{1}{2}(2g+k+l+2m+2n-2 - (k-k') + (l-l'))$
	\item hyperbolic positive type one $\epsilon_1 = \frac{1}{2}, \epsilon_2 = \frac{1}{2}$, $ind_R(u) = \frac{1}{2}(2g+k+l+2m+2n-2 + k - l)$
	\item hyperbolic positive type two $\epsilon_1 = -\frac{1}{2}, \epsilon_2 = -\frac{1}{2}$, $ind_R(u) = \frac{1}{2}(2g+k+l+2m+2n-2 - k + l)$
\end{enumerate}

We have $k \geq 1, l \geq 1, 0 \leq k' \leq k, 0 \leq l' \leq l$,

In case negative hyperbolic type one, the minimum of right formula $\frac{1}{2}(2g+2k+2m+2n-2-k'+l')$ is $-\frac{1}{2}$, only if $g=0, m=0, n=0, k=k'=1, l'=0$. But $a_1, \ldots, a_{k'}$ are odd numbers, and $a_{k'+1}, \ldots, a_{k}$ are even numbers, therefore we have $a_1 + \cdots a_k + 2c_1 + \cdots 2c_m \equiv k' \text{mod} 2$, from the relation of (\ref{sum}), we get $k' \equiv l'\ \text{mod} 2$, which contradict the minimum case. So $ind_R(u) \geq 0$. It is easy to see, $ind_R (u) = 0$ if and only if $k = l = 1$ or $k=k'=2, l' = 0$. Case negative hyperbolic type two is similar.

In case Positive hyperbolic type one, the minimum of $ind_R(u)$ is $0$, when $g=0, m=0, n=0, k=1$. Case Positive hyperbolic type two is similar.

If $\beta$ is elliptic, $\Psi_{\beta}(\tau) = R(\theta)$, $\mu_1(\beta^k) = \frac{1}{2}\mu_{CZ}(\beta^k)$
\begin{align*}
&\text{ind}_R(u)= -\frac{1}{2}(2-2g-k-l-2m-2n) \\
&+ \sum_{i=1}^{k}\frac{1}{2}\mu_{CZ}(\beta^{a_i})- \sum_{j=1}^{l}\frac{1}{2}\mu_{CZ}(\beta^{b_j}) \\
&+ \sum_{p = 1}^{m}\mu_{CZ}(\beta^{c_p}) - \sum_{q = 1}^{n}\mu_{CZ}(\beta^{d_q})\\
&= -\frac{1}{2}(2-2g-k-l-2m-2n)\\
&+ \sum_{i = 1}^{k}\lfloor a_i\theta\rfloor + \frac{k}{2} - \sum_{j = 1}^{l}\lfloor b_j\theta\rfloor - \frac{l}{2} + \sum_{p = 1}^{m}2\lfloor c_p\theta\rfloor + m - \sum_{q = 1}^{n}2\lfloor d_q\theta\rfloor - n\\
& =\frac{1}{2}(2g-2) + \sum_{i = 1}^{k}\lceil a_i\theta\rceil - \sum_{j = 1}^{l}\lfloor b_j\theta\rfloor + \sum_{p = 1}^{m}2\lceil c_p\theta\rceil - \sum_{q = 1}^{n}2\lfloor d_q\theta\rfloor
\end{align*}

We define $\text{ind}_\theta(a_1, \ldots, a_k; b_1, \ldots, b_l; c_1, \ldots, c_m; d_1, \ldots, d_n) := \sum_{i = 1}^{k}\lceil a_i\theta\rceil - \sum_{j = 1}^{l}\lfloor b_j\theta\rfloor + \sum_{p = 1}^{m}2\lceil c_p\theta\rceil - \sum_{q = 1}^{n}2\lfloor d_q\theta\rfloor -1$. Let $M = a_1 + \cdots a_k + 2c_1 + \cdots 2c_m = b_1 + \cdots b_l + 2d_1 + \cdots + 2d_n$. $\sum_{i = 1}^{k}\lceil a_i\theta\rceil + \sum_{p = 1}^{m}2\lceil c_p\theta\rceil \geq \lceil M\theta \rceil$, $\lfloor M\theta\rfloor \geq \sum_{j = 1}^{l}\lfloor b_j\theta\rfloor + \sum_{q = 1}^{n}2\lfloor d_q\theta\rfloor$. That yields $\text{ind}_\theta \geq 0$. And the equality holds, if and only if $\sum_{i = 1}^{k}\lceil a_i\theta\rceil + \sum_{p = 1}^{m}2\lceil c_p\theta\rceil = \lceil \frac{M\theta}{2\pi} \rceil$ and $\lfloor M\theta\rfloor = \sum_{j = 1}^{l}\lfloor b_j\theta\rfloor + \sum_{q = 1}^{n}2\lfloor d_q\theta\rfloor$.

\end{proof}

Remark: The elliptic case can also be proved by the relation $ind_R u = \frac{1}{2}ind u$ and the result in nonsymmetric case.

We can define Real ECH index and use the iteration formulae for brake orbits to get the real partition of real pseudoholomorphic curves by means of the index inequalities like in \cite[Theorem 1.7]{Hutchings}, which is crucial to define Real ECH.

Similar to the generator of ECH, we define
\begin{defi}
	A Real generator is a finite set of pairs $\beta = \{(\beta_1, m_1) \ldots, (\beta_k, m_k)\}$, where $\beta_1, \ldots, \beta_k$ are disjoint brake orbit and $m_1, \ldots, m_k \in {\bf N}^+$(multiplicities).
	
	a flow line $u$ from $\alpha$ to $\beta$ is a Real pseudoholomorphic curve which converges to $\alpha, \beta$ at positive and negative punctures, and its projection to $Y$ represents a class $Z \in H_2(Y; \alpha, \beta)$.
\end{defi}

If the partition associated to each orbit $\beta_i$ is $(q_{i, 1}, q_{i, 2}, \ldots)$, from the paper of the author and C.Zhu\cite[Main Theorem]{ZZ}, we know the Fredholm index of  Real pseudoholomorphic curve is
\begin{equation}
\text{ind}_{\text R}(u) = -\frac{1}{2}\chi(\Sigma) + c_1(u) + \sum_{i}\sum_{r}\mu_1(\alpha_i^{q_{i, r}}) - \sum_{j}\sum_{r}\mu_1(\alpha_j^{q_{j, r}})
\end{equation}

Comparing the Fredholm index of Real pseudoholomorphic curves, we should define the Real ECH index as the following
\begin{defi}
We define the Real ECH index
\begin{equation*}
I_{\text RECH}(\alpha, \beta; Z) = \frac{1}{2}c_1(u) + \frac{1}{2}Q(Z) + \sum_{i}\sum_{k=1}^{m_i}\mu_1(\alpha_i^k) - \sum_{j}\sum_{k=1}^{n_j}\mu_1(\beta_j^k)
\end{equation*}
\end{defi}
Where $Q(Z)$ is the self intersection number.

And this Real ECH index is really the upper bound of Fredholm index of Real pseudoholomorphic curves for all partition of ends. At the same time, the upper bound is reached by a unique partition.

Here is the theorem,
\begin{theo}\label{Par}
$\text{ind}_{\text R}(u) \leq I_{\text RECH}(u)$, and equality holds if and only if the ends satisfy a unique partition. Without loss of generality, we consider one brake orbit with total multiplicity $(\beta, n)$. The equality holds only if the negative ends satisfy the partition
\begin{enumerate}
\item $\alpha$ is elliptic, the same partition as in ECH, see \cite[\S 4]{Hutchings}, which is determined by the parameter $\theta, \Psi_\beta(\tau) = R(\theta)$
\item $\beta$ is positive hyperbolic,
\begin{enumerate}[i.]
\item positive hyperbolic type two, $(1,\ldots, 1)$, the same partition as in ECH,
\item positive hyperbolic type one, $(n)$,
\end{enumerate}
\item $\beta$ is negative hyperbolic,
\begin{enumerate}[i.]
\item negative hyperbolic type two, $(2,\ldots, 2)$ or $(2,\ldots, 2, 1)$, the same partition as in ECH,
\item negative hyperbolic type one, $(n)$, when $n$ is odd; $(1, n-1)$, when $n$ is even.
\end{enumerate}
\end{enumerate}
\end{theo}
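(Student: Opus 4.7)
The plan is to expand $I_{RECH}(u)-\text{ind}_R(u)$ into a topological piece and a partition piece, bound each separately by a Real adjunction formula and a combinatorial minimization, and read off the equality case from the iteration formulae of Section 2. Substituting the two index definitions directly,
$$I_{RECH}(u)-\text{ind}_R(u)=-\tfrac{1}{2}c_1(u)+\tfrac{1}{2}Q(Z)+\tfrac{1}{2}\chi(\Sigma)+\sum_i\Bigl[\sum_{k=1}^{m_i}\mu_1(\beta_i^k)-\sum_r\mu_1(\beta_i^{q_{i,r}})\Bigr]-\sum_j[\cdots],$$
where the last bracket has the analogous form at the negative ends. The first three terms are topological and the brackets are purely combinatorial, so it is reasonable (as the theorem suggests) to analyze them orbit by orbit.

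For the topological piece I would derive a Real adjunction inequality by applying the standard relative adjunction formula to the symmetric pair $u\cup Nu$ and halving. This produces a lower bound of the form
$$-\tfrac{1}{2}c_1(u)+\tfrac{1}{2}Q(Z)+\tfrac{1}{2}\chi(\Sigma)\;\ge\;\delta(u)+\tfrac{1}{2}\!\!\sum_{\text{brake ends}}\!\!w_R+\!\!\sum_{\text{pair ends}}\!\!w,$$
where $\delta(u)\ge 0$ counts singularities and $w,w_R$ are (Real) writhe contributions at the ends, each non-negative and each bounded in terms of the local partition and the type of the underlying brake orbit. This is the Real analogue of the writhe bounds used by Hutchings in \cite{Hutchings}, and it reduces the proof to an end-by-end combinatorial comparison.

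With the topological step in place, the core of the proof is the partition analysis for a single orbit $\beta$ of multiplicity $n$, as in the statement. Using the five iteration formulae of Section 2, the sum $\sum_r\mu_1(\beta^{q_r})$ is a fully explicit function of the partition $(q_1,\ldots,q_l)$: positive hyperbolic type one gives $n\mu_1(\beta)+(l-n)/2$, minimized uniquely at $l=1$, i.e. $(n)$; positive hyperbolic type two gives $n\mu_1(\beta)+(n-l)/2$, minimized uniquely at $l=n$, i.e. $(1,\ldots,1)$; negative hyperbolic type one gives $n\mu_1(\beta)+\tfrac{1}{2}\#\{r:q_r\text{ even}\}$, minimized by any all-odd partition; negative hyperbolic type two gives $n\mu_1(\beta)-\tfrac{1}{2}\#\{r:q_r\text{ even}\}$, minimized by using as many parts equal to $2$ as possible; the elliptic case reproduces, via $\mu_1(\beta^k)=\lfloor k\theta\rfloor+\tfrac12$, the classical Hutchings admissible partition associated with $\theta$, already present in the proof of Theorem \ref{RECHLE}.

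The delicate case, and the main obstacle, will be negative hyperbolic type one with $n$ even: the combinatorial minimum $n\mu_1(\beta)$ is attained by every partition of $n$ into odd parts, so the purely combinatorial estimate is insufficient to single out $(1,n-1)$. To force uniqueness I will need to use the writhe term $w_R$ from the Real adjunction step. A writhe computation in a chosen trivialization along $\beta$ should give a strict bound on $w_R$ as soon as a brake end is split into more than one odd block of length $\ge 3$, ruling out partitions such as $(3,n-3)$ and leaving only $(1,n-1)$. Once that writhe refinement is in hand, adding the Real adjunction inequality to the combinatorial estimates yields $\text{ind}_R(u)\le I_{RECH}(u)$, and equality forces simultaneous saturation of the adjunction bound (no singularities, extremal writhe) and of the combinatorial minimum, pinning down the partition to the one listed in each case.
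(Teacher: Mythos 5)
Your overall strategy — Real adjunction via doubling plus writhe bounds at the ends plus the iteration formulae of Section 2, reduced end-by-end — is the same road the paper follows (mirroring \cite[\S 6.4--6.5]{Hutchings}). The detailed tabulation of $\sum_r\mu_1(\beta^{q_r})$ is also correct for each of the five cases. But the central logical step, that \emph{minimizing} $\sum_r\mu_1(\beta^{q_r})$ picks out the extremal partition, does not hold, and this is a genuine gap, not a technicality.

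The reason is that the two pieces of your decomposition are not independent of the partition, and neither piece is separately sign-definite. The term $\tfrac{1}{2}\chi(\Sigma)$ that you folded into the ``topological piece'' itself depends on the number of ends $l$, so changing the partition moves both pieces. In the normalized trivialization with $\mu_1(\beta)=\tfrac12$, for positive hyperbolic type one one has $\mu_1(\beta^{q})\equiv\tfrac12$, hence $\sum_r\mu_1(\beta^{q_r})=\tfrac{l}{2}$, which exactly cancels the $+\tfrac{l}{2}$ coming from $-\tfrac12\chi$; thus $\text{ind}_R(u)$ is \emph{independent} of the negative partition at $\beta$, and no purely algebraic minimization of $\sum_r\mu_1$ can single out $(n)$. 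Likewise for negative hyperbolic type two the quantity $-\sum_r\mu_1+\tfrac{l}{2}$ is maximized by any mix of parts equal to $1$ and $2$, not only by the partitions $(2,\ldots,2)$ or $(2,\ldots,2,1)$ that the theorem asserts. In every hyperbolic subcase the uniqueness of the extremal partition is forced by the writhe bound (the $\sum_{i,j}\min(q_i\rho_j,q_j\rho_i)$ linking term), not by the iteration formula alone. The paper therefore does not decouple the two pieces: after the Hutchings-style reduction it works with the single combined inequality $\sum_i(\mu_1(\beta^{q_i})-\tfrac12\rho_i)+\tfrac12\sum_{i,j}\min(q_i\rho_j,q_j\rho_i)\geq\sum_{i=1}^n\mu_1(\beta^i)$, with $\rho_i=\mu_1(\beta^{q_i})+\tfrac12$, and then compares its two sides Left, Right term-by-term to the nonsymmetric ECH quantities $\text{Left}_0,\text{Right}_0$ via identities like $\text{Left}=\tfrac12\text{Left}_0-\tfrac{k}{2}$; this is what transmits both the writhe saturation and the combinatorics simultaneously. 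Your flagged hard case (negative hyperbolic type one, $n$ even) is indeed the most intricate, but ``a writhe computation should give a strict bound'' is not an argument; the paper's proof there involves bounding Left from below by $\tfrac{n(n+1)}{4}+\tfrac{n}{4}+\tfrac{k(k-2)}{4}$ and then subcasing $k>2$, $k=2$ with $\#\text{odd}=0$ or $2$, and $k=1$, which is precisely the content you would need to supply and which does not follow from your two displayed inequalities.
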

And for the positive ends, we can reverse the ends by $\tilde{u} = u(-s, -t)$. Therefore positive ends of elliptic orbits satisfy the partition of negative elliptic ends determined by parameter $-\theta$, which is the same as ECH. The reverse process does not change the positive or negative hyperbolic property, but it switches the type one and two, because of $\Psi_{\beta}(-\tau) = \Psi_{\beta}(\tau)^{-1}$ see Appendix 2 equation (\ref{Appendix}). Hence the positive ends of hyperbolic positive type one orbits satisfy the partition of negative ends of hyperpobic positive type two and verse visa. The same holds for the hyperbolic negative orbits.

Without loss of generality, we just need to consider the situation of one orbit with total multiplicity $(\beta, n)$ at negative punctures. Assume $u$ is convergent to a brake orbit $\beta$ (maybe with multiplicity) at a negative puncture.

Based on Hofer, Wysocki and Zehnders' result\cite{Hofer1}, \cite{Hofer2} and  Frauenfelder and Kangs' result \cite{Urs}. We can choose a neighborhood $E$ of $\beta$ such that $E \cong S^1 \times {\bf R}^2$. The global linearised operator $\partial_s + J \partial_t$ along $u(s, t)$ can be expressed by $\partial_s + J_0 \partial_t + S(s, t)$ in this neighborhood, where $S$ is a symmetric matrix which satisfies $S(s, -t) = N_0S(s, t)N_0$.

Let ${\widetilde W}^{1, 2} := \{x \in W^{1, 2}(S^1, {\bf R}^2)|  x(-t) = Nx(t)\}, {\widetilde L}^2 := \{x \in L^{1, 2}(S^1, {\bf R}^2)| \\ x(-t) = Nx(t)\}$, $A = -J_0 \partial_t - S(s, t): {\widetilde W}^{1, 2} \subset {\widetilde L}^2 \to {\widetilde L}^2$. From the nondegenerate condition, we know that the kernel of operator $A$ is $\{0\}$, and there is a countable set of eigenfunction $e_n$ with $Ae_n = \lambda_n e_n$, which constitute an orthonormal basis for ${\widetilde L}^2$. $u$ can be expressed by $\{e_n(t)\}$, when $s \ll 0$. The following two lemmas are proved in \cite[\S 3, \S 4]{Urs}

\begin{lemma}\label{le1}
	Let $u$ be a Real pseudoholomorphic curve which converges to a brake orbit in $-\infty$, then if $s \ll 0$
\begin{equation*}
	u(s, t) = \sum_{n}a_n e^{\lambda_n s} e_n(t)
\end{equation*}
	with $\lambda_n > 0, e_n(-t) = N e_n(t)$.
\end{lemma}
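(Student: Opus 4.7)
The plan is to adapt the standard asymptotic analysis of Hofer--Wysocki--Zehnder for pseudoholomorphic curves at a puncture to the Real setting, exploiting the compatibility of the asymptotic operator with the fibrewise involution $N_0$. In the tubular neighborhood $E \cong S^1 \times {\bf R}^2$ of $\beta$, the map $u(s,t)$ satisfies
\begin{equation*}
\partial_s u + J_0\,\partial_t u + S(s,t)\, u = 0,
\end{equation*}
with $S$ symmetric and $S(s,-t) = N_0 S(s,t) N_0$, while the Real condition on $u$ yields $u(s,\cdot)\in{\widetilde W}^{1,2}$ for each $s \ll 0$.

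The first step is to check that the asymptotic operator $A = -J_0\partial_t - S(s,t)$ preserves the symmetric subspace. Combining $J_0 N_0 = -N_0 J_0$ (a consequence of $J\circ dN = -dN\circ J$) with the symmetry $S(s,-t) = N_0 S(s,t) N_0$, a direct computation shows that if $x(-t) = N_0 x(t)$ then $(Ax)(-t) = N_0 (Ax)(t)$, so $A$ restricts to a self-adjoint Fredholm operator on ${\widetilde W}^{1,2} \subset {\widetilde L}^2$. The nondegeneracy of $\beta$ as a brake orbit excludes a zero eigenvalue on this restricted domain, hence $A$ has discrete real spectrum $\{\lambda_n\}$ with an orthonormal eigenbasis $\{e_n\}$ of ${\widetilde L}^2$ that satisfies $e_n(-t) = N_0 e_n(t)$ by construction.

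The second step is to expand $u(s,\cdot) = \sum_n a_n(s)\, e_n(\cdot)$ and analyse the equation mode by mode. At leading order the coefficients obey $a_n'(s) = \lambda_n a_n(s)$, giving $a_n(s) \sim c_n e^{\lambda_n s}$, and the hypothesis that $u$ converges to $\beta$ as $s \to -\infty$ in ${\widetilde L}^2$ forces $c_n = 0$ whenever $\lambda_n \leq 0$. Only the modes with $\lambda_n > 0$ survive, yielding the claimed expansion.

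The hard part will be upgrading this formal mode decomposition of the linearised problem to a genuine asymptotic formula for the nonlinear map $u$: one must control the nonlinear remainder together with the $s$-dependence of $S$ via exponentially weighted Sobolev estimates and an elliptic bootstrap. These are precisely the ingredients of the HWZ asymptotic analysis and its brake-orbit adaptation by Frauenfelder and Kang in \cite{Urs}. Since $A$ restricts cleanly to ${\widetilde W}^{1,2}$, the Real symmetry creates no new obstruction, and the entire argument can be transplanted onto the symmetric subspace without modification.
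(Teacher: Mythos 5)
The paper offers no proof of this lemma at all: it simply records that both Lemma~\ref{le1} and Lemma~\ref{le2} are established in \cite[\S 3, \S 4]{Urs}. Your sketch — checking that $J_0N_0=-N_0J_0$ and $S(s,-t)=N_0S(s,t)N_0$ force the asymptotic operator to preserve the symmetric subspace $\widetilde W^{1,2}$, deducing a discrete eigenbasis $\{e_n\}$ with $e_n(-t)=N_0e_n(t)$, expanding in modes, and deferring to the HWZ/Frauenfelder--Kang machinery for the nonlinear exponential-decay estimates — is exactly the argument that reference carries out, so it matches the paper's intent and contains no gap.
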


The winding number $\eta(e)$ for a eigenfunction $e(t)$ is the rotation number of $e(t)$ in ${\bf R}^2$ around the origin.

\begin{lemma}\label{le2}
\begin{enumerate}
	\item If $\lambda_1 < \lambda_2$, $e_1, e_2$ are eigenfunctions corresponding to $\lambda_1, \lambda_2$, then $\eta(e_1) < \eta(e_2)$.
	\item For each integer $\eta$, the space of eigenfunction with winding number $\eta$ is one dimensional.
	\item The maximal winding number for a negative eigenvalue is $\mu_1(\gamma) - \frac{1}{2}$, and the minimal winding number for a positive eigenvalue is $\mu_1(\gamma) + \frac{1}{2}$.
\end{enumerate}

\end{lemma}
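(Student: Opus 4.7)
The plan is to adapt the Hofer--Wysocki--Zehnder analysis of asymptotic operators from the periodic case to the Real-symmetric setting, following Frauenfelder--Kang. The starting point is that $A = -J_0\partial_t - S(t)$ is self-adjoint on $\widetilde{L}^2$: the hypothesis $S(-t) = N_0 S(t)N_0$ guarantees $A$ preserves the constraint $x(-t) = N_0 x(t)$, and on this closed subspace the operator has discrete real spectrum accumulating only at $\pm\infty$. Every eigenfunction is nowhere vanishing because $J_0 \dot e = -(S+\lambda)e$ is a first-order linear ODE, so a single zero would force $e \equiv 0$ by uniqueness; hence the winding number $\eta(e) \in \mathbb{Z}$ is unambiguously defined.

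For part (1), I would run the standard spectral flow / crossing form argument with the Real boundary condition. Let $\Phi^\lambda(t)$ solve $J_0 \dot x = -(S+\lambda)x$ with $\Phi^\lambda(0)v \in L_0$ for a fixed $v$. The Real symmetry converts the eigenvalue condition into a Lagrangian crossing condition $\Phi^\lambda(\tau/2)v \in L_0$. The crossing form $\langle x, x\rangle$ is positive definite, so as $\lambda$ increases each crossing raises the winding of $\Phi^\lambda v$ by exactly one, giving $\eta(e_1) < \eta(e_2)$ whenever $\lambda_1 < \lambda_2$. For part (2), note that in the non-symmetric setting each winding number supports at most a $2$-dimensional eigenspace of asymptotic operator; the Real constraint cuts this to exactly one dimension since the involution $e \mapsto N e(-\cdot)$ preserves each eigenspace and its fixed locus on a $2$-dimensional eigenspace is a single line. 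Strict monotonicity from (1) then rules out the dimension being $0$.

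For part (3), I would match the spectral flow count of (1) with the Robbin--Salamon definition $\mu_1(\gamma) = \mu^{RS}(L_0, \Psi([0, \tau/2])L_0)$. Comparing the two lists of crossings side by side identifies the extremal winding number for negative eigenvalues with $\mu_1(\gamma) - \tfrac{1}{2}$ and for positive eigenvalues with $\mu_1(\gamma) + \tfrac{1}{2}$; the half-integer offset is the standard endpoint contribution $\pm\tfrac{1}{2}$ built into the Robbin--Salamon convention at $t = 0$ and $t = \tau/2$, where the flow begins and ends on $L_0$.

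The main obstacle I expect is the convention matching in part (3): one has to keep careful track of how the Real involution interchanges $L_0$ and $L_1$ over the half-period, how the endpoint crossings of $\mu^{RS}$ contribute fractionally, and how the spectral flow of the family $\{A - \lambda\}$ on the symmetric subspace lines up with the Lagrangian path $\Psi([0,\tau/2])L_0$. Once this bookkeeping is settled, the remainder reduces to a Sturm--Liouville-type monotonicity argument that is essentially identical to the periodic case.
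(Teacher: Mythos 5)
You should first note that the paper does not actually prove this lemma: immediately before Lemmas~\ref{le1} and~\ref{le2} the text reads ``The following two lemmas are proved in \cite[\S 3, \S 4]{Urs},'' so what the paper supplies is a citation to Frauenfelder--Kang plus a remark on winding conventions, not a proof. Any argument you write here is therefore new material; the right comparison is against the Hofer--Wysocki--Zehnder/Frauenfelder--Kang template you are invoking, and against the paper's own remark about conventions.

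On that basis, your outline captures the right ingredients (self-adjointness of $A$ on $\widetilde L^2$, nonvanishing of eigenfunctions from the first-order ODE, a positive crossing form giving strict monotonicity, and a spectral-flow match with the Robbin--Salamon count), but there are two places where it is too quick.  In part~(2), you assert that on a two-dimensional winding-$\eta$ subspace the involution $\sigma \colon e \mapsto N_0\, e(-\cdot)$ has a one-dimensional fixed locus; this is not automatic, since an involution on a two-dimensional space can have a $0$-, $1$-, or $2$-dimensional fixed locus.  The standard way to pin it down is via the initial condition: a $\sigma$-fixed eigenfunction must have $e(0) \in L_0$ (the $+1$-eigenspace of $N_0$) and a $\sigma$-anti-invariant one must have $e(0) \in L_1$, and together with uniqueness of the linear first-order ODE this forces each of the $\pm 1$ eigenspaces of $\sigma$ to be at most one-dimensional (equivalently, argue by deformation to the model operator $-J_0\partial_t$, whose $\widetilde L^2$-eigenfunctions are $R(2\pi n t)\,e_0$ with $e_0 \in L_0$ and winding $n$, then use continuity of the winding).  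Surjectivity onto all integers should likewise be justified, e.g.\ via this deformation.  In part~(3), you correctly flag the convention-matching issue but stop short of doing it, and there is one specific pitfall: your Lagrangian-crossing argument at $t=\tau/2$ naturally tracks a \emph{half-period} winding, while the lemma's $\eta(e)$ is the \emph{full-period} rotation number.  The paper's remark immediately below the lemma addresses exactly this, noting that Frauenfelder--Kang use both $\operatorname{wind}(u)$ (full period) and $\operatorname{wind}(u_I)$ (half period) with $\operatorname{wind}(u)=2\operatorname{wind}(u_I)$, and that the present paper uses the former.  Your bookkeeping for the bound $\mu_1(\gamma)\pm\tfrac{1}{2}$ (recall $\mu_1(\gamma)=\tfrac12+i_{L_0}(\gamma)$, so this bound is an integer) has to absorb that factor of $2$; as written the mechanism for this is not visible.
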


Remark: in \cite[\S 3]{Urs}, they defined two winding numbers, the winding number $(u)$ of total period $\tau$ and relative winding number $(u_I)$ of half period $\frac{\tau}{2}$, wind$(u) = 2$ wind$(u_I)$(see \cite[Proposition 4.6]{Urs}), where $u$ is a plane converging to a brake orbit. In our paper, we only use wind$(u)$, the same as in \cite[equation (37)]{Hofer2}.

Let $\xi$ be the braid corresponding to a negative end, supposing $s \ll 0$. The writhe of a braid $\omega(\xi)$ is defined in \cite[\S 3.1]{Hutchings} as the signed number of crossing of the braid $\xi$ in the neighborhood $E$, where counterclockwise twists contribute positively. We can get the bound of the writhe, imitating the method of Hutchings in \cite[Lemma 6.7]{Hutchings},
\begin{lemma}\label{wind}
	Suppose the multiplicity of $\xi$ is $n$, then $\omega(\xi) \geq (n-1)(\mu_1(\beta^n) + \frac{1}{2}),$
	if the equality holds then
\begin{enumerate}
\item If $\beta$ is hyperbolic positive
\begin{enumerate}[i.]
\item positive hyperbolic type one, $n$ is arbitrary
\item positive hyperbolic type two, $n$ is $1$
\end{enumerate}
\item If $\beta$ is hyperbolic negative
\begin{enumerate}[i.]
\item negative hyperbolic type one, $n$ is odd or $4k, k \in {\bf N^+}$
\item negative hyperbolic type two, $n$ is odd or $2$
\end{enumerate}
\end{enumerate}

\end{lemma}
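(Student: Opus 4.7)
The plan is to mirror Hutchings' proof of \cite[Lemma 6.7]{Hutchings}, replacing the standard asymptotic analysis with the Real asymptotic machinery provided by Lemmas \ref{le1} and \ref{le2}. First I would fix the local model $E \cong S^1 \times \mathbf{R}^2$ of a neighborhood of $\beta^n$ and, for $s_0 \ll 0$, identify $\xi$ with the transverse intersection of the end of $u$ with the slice $\{s = s_0\}$. Since the end has covering multiplicity $n$, this braid has $n$ strands, and by Lemma \ref{le1} each strand is $C^0$-close, as $s_0 \to -\infty$, to a shift by the base period $\tau$ of the leading Real eigenfunction $e_1(t)$, whose eigenvalue $\lambda_1$ is the smallest positive one in the spectrum of $A$ acting on $\widetilde{W}^{1,2}$.

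Second I would read off the writhe from the leading asymptotics. The pairwise linking number between any two shifted strands coincides with the winding number $\eta(e_1)$ of $e_1$ around the origin, so summing over the $\binom{n}{2}$ ordered pairs gives $\omega(\xi) \geq (n-1)\eta(e_1)$. Applying Lemma \ref{le2}(3) with $\gamma = \beta^n$ yields $\eta(e_1) \geq \mu_1(\beta^n) + \tfrac{1}{2}$, which is exactly the desired lower bound.

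Third, the equality discussion requires two conditions: (a) the leading Real eigenfunction actually realizes the minimum positive winding number $\mu_1(\beta^n) + \tfrac{1}{2}$, and (b) the subleading terms in the expansion of Lemma \ref{le1} do not perturb the braid so as to introduce additional crossings. Condition (b) is generic and reduces to the one-dimensionality of each winding-number eigenspace from Lemma \ref{le2}(2). Condition (a) is a case-by-case check using the iteration formulae (\ref{hn1})--(\ref{hp2}): one writes $\mu_1(\beta^n) + \tfrac{1}{2}$ explicitly for each of the four hyperbolic subtypes and compares it with the winding numbers actually realized by Real eigenfunctions, which are governed by the $\omega_n^{2i}$-indices appearing in Theorem \ref{LZ} and by the splitting-number calculation performed in the hyperbolic discussion of Section 2.

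The main obstacle will be the equality analysis in the negative hyperbolic subcases, where $\mu_1(\beta^n)$ jumps by $\pm\tfrac{1}{2}$ between odd and even $n$ and the Real constraint $e(-t) = N_0 e(t)$ selects only half of the eigenfunctions of the non-symmetric operator. Showing that the minimum winding number is attainable in $\widetilde{W}^{1,2}$ precisely when $n$ is odd or $n = 4k$ (type one), respectively $n$ is odd or $n = 2$ (type two), requires a careful parity argument parallel to the GIT normal forms of Appendix B, and one must confirm that in all other cases the minimum lies in the anti-invariant complement and is therefore skipped, so that the leading Real eigenfunction carries a strictly larger winding number and the inequality is strict.
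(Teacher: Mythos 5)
Your overall framing --- localize near $\beta^{n}$, expand the end via Lemma~\ref{le1}, read off the leading winding number and compare it with the spectral bound of Lemma~\ref{le2}(3) --- matches the paper's first two steps. But you have misidentified the mechanism behind the equality classification, and this is where the proof actually lives. The paper does \emph{not} argue that equality fails because the minimal winding number $\mu_{1}(\beta^{n})+\tfrac12$ ``lies in the anti-invariant complement and is skipped''. That number is always realized by a Real eigenfunction; this is precisely what Lemma~\ref{le2}(3) asserts, with no parity exceptions. What controls equality is the braid-theoretic criterion imported from Hutchings' Lemma~6.7: if a braid with $n$ strands has winding number $\eta$ and $\gcd(n,\eta)=1$, it is isotopic to the $(n,\eta)$ torus braid, whose writhe is exactly $\eta(n-1)$; if $\gcd(n,\eta)=d>1$, it is forced to be a $d$-strand cabling of a shorter braid and the writhe is strictly larger than $\eta(n-1)$. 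Your heuristic that ``each pair of strands links $\eta(e_{1})$ times, so sum over $\binom{n}{2}$ pairs'' does not give the correct count and cannot detect this dichotomy.

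Once the $\gcd$ criterion is in place, the case analysis is purely arithmetic: the paper chooses a convenient trivialization in each hyperbolic subtype, computes $\eta=\mu_{1}(\beta^{n})+\tfrac12$ from the iteration formulae, and evaluates $\gcd(n,\eta)$. For positive hyperbolic type one, $\eta=1$ and $\gcd(n,1)=1$ always. For positive hyperbolic type two, $\eta=0$ and $\gcd(n,0)=n$, so only $n=1$ works. For negative hyperbolic type one, $\eta=\tfrac{n+1}2$ for odd $n$ (coprime to $n$) and $\eta=\tfrac n2+1$ for even $n$, where $\gcd\bigl(n,\tfrac n2+1\bigr)=\gcd\bigl(2,\tfrac n2 -1\bigr)$, which equals $1$ iff $n\equiv 0\ (\mathrm{mod}\ 4)$. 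For negative hyperbolic type two, $\eta=\tfrac{n+1}2$ for odd $n$ and $\eta=\tfrac n2$ for even $n$, with $\gcd\bigl(n,\tfrac n2\bigr)=\tfrac n2$, forcing $n=2$. The ``careful parity argument'' you anticipate is nothing but this $\gcd$ computation. In short: you have the right inequality and the right spectral input, but without the torus-braid / cabling dichotomy you cannot justify when equality holds, and your proposed substitute (leading eigenfunction skipped in the Real spectrum) is false and would produce a different, incorrect list of exceptional multiplicities.
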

\begin{proof}
Suppose a braid $\xi$ has multiplicity $n$ and winding number $\eta$: if gcd$(n, \eta) = 1$, then $\xi$ is isotopic to a $(n, \eta)$ torus braid, and this braid has writhe $\eta(n-1)$; if gcd$(n, \eta) = d > 1$, then $\xi$ is a $d$-strand cabling of a braid $\xi_1$ with multiplicity $\frac{n}{d}$ and winding number $\frac{\eta}{d}$, and we can get $\omega(\xi) > \eta(n-1)$.See \cite[Lemma 6.7]{Hutchings}.

If $\beta$ is hyperbolic positive type one, we can choose a trivialization such that $\mu_1(\beta^k) = \frac{1}{2}$, then $\xi$  has minimal winding number $1$, gcd$(n, 1) = 1$.

If $\beta$ is hyperbolic positive type two, we can choose a trivialization such that $\mu_1(\beta^k) = -\frac{1}{2}$, then $\xi$  has minimal winding number $0$, gcd$(n, 0) = n$.

If $\beta$ is hyperbolic negative type one, we can choose a trivialization such that
\begin{align*}
\mu_1(\beta^k) = \begin{cases}
\frac{k}{2},\quad  k\ \text{is odd}\\
\frac{k}{2} + \frac{1}{2},\quad  k\ \text{is even}
\end{cases}
\end{align*}
$\xi$  has minimal winding number $\eta = \mu_1(\beta^n) + \frac{1}{2}$, hence
\begin{align*}
\text{gcd}(n, \eta) = \begin{cases}
\text{gcd}(n, \frac{n}{2} + \frac{1}{2}) = 1,\quad  n\ \text{is odd}\\
\text{gcd}(n, \frac{n}{2} + 1) = \text{gcd}(2, \frac{n}{2}-1),\quad  n\ \text{is even}
\end{cases}
\end{align*}

If $\beta$ is hyperbolic negative type two, we can choose a trivialization such that
\begin{align*}
\mu_1(\beta^k) = \begin{cases}
\frac{k}{2},\quad  k\ \text{is odd}\\
\frac{k}{2} - \frac{1}{2},\quad  k\ \text{is even}
\end{cases}
\end{align*}
$\xi$ has minimal winding number $\eta = \mu_1(\beta^n) + \frac{1}{2}$, hence
\begin{align*}
\text{gcd}(n, \eta) = \begin{cases}
\text{gcd}(n, \frac{n}{2} + \frac{1}{2}) = 1,\quad  n\ \text{is odd}\\
\text{gcd}(n, \frac{n}{2}) = \frac{n}{2},\quad  n\ \text{is even}
\end{cases}
\end{align*}

By the argument in first paragraph, the equality holds only if $\text{gcd}(n, \eta) = 1$, we finish the proof.
\end{proof}

The linking number $\ell(\xi_1, \xi_2)$ is defined to be the signed number of crossings of $\xi_1$ with $\xi_2$, the same sign convention as for the writhe, See \cite[\S 6.3]{Hutchings}. By same proof as Hutchings, we can prove
\begin{lemma}
	Linking number $\ell(\xi_1, \xi_2) \geq \text{min} (q_1 (\mu_1(\xi_2) + \frac{1}{2}), q_2 (\mu_1(\xi_1) + \frac{1}{2}))$
\end{lemma}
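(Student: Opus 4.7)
The plan is to mimic Hutchings' ECH linking-number bound \cite[Lemma 6.9]{Hutchings}, substituting the Real asymptotic expansion (Lemma \ref{le1}) and the Real winding estimate (Lemma \ref{le2}) for their ECH analogues. First I would invoke Lemma \ref{le1} to expand each negative end inside the standard neighborhood $E \cong S^1 \times \mathbb{R}^2$ of $\beta$ as
$$u_i(s,t) = \sum_n a^{(i)}_n e^{\lambda^{(i)}_n s}\, e^{(i)}_n(t), \qquad \lambda^{(i)}_n > 0,$$
and observe that for $s$ sufficiently negative the slowest-decaying eigenfunction $e^{(i)}_{*}$ dominates. Let $\eta_i$ denote its winding number; by Lemma \ref{le2}(3) we have $\eta_i \geq \mu_1(\xi_i) + \tfrac12$.

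Second, I would show that for $s \ll 0$ the braid $\xi_i$ is isotopic, in a writhe- and linking-preserving way, to a $(q_i,\eta_i)$ torus braid in $E$. The linking number of two torus braids of types $(q_1,\eta_1)$ and $(q_2,\eta_2)$ on the same underlying cylinder is $\min(q_1\eta_2, q_2\eta_1)$, obtained by parametrizing strands as $z \mapsto r\, e^{2\pi\sqrt{-1}(\eta_i t + j/q_i)}$ and counting signed crossings. Corrections from higher eigenfunctions produce transverse intersections of the pseudoholomorphic curves which, by positivity of intersection, can only raise the linking number. Combining these steps yields
$$\ell(\xi_1,\xi_2) \geq \min(q_1\eta_2,\ q_2\eta_1) \geq \min\!\Bigl(q_1\bigl(\mu_1(\xi_2)+\tfrac12\bigr),\ q_2\bigl(\mu_1(\xi_1)+\tfrac12\bigr)\Bigr).$$

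The main obstacle is justifying the torus-braid approximation rigorously: one must control the tail of the eigenfunction expansion uniformly in $t$, invoke the fact that distinct eigenspaces do not force new crossings, and argue that higher-order perturbations only refine, rather than alter, the underlying braid isotopy class. This is carried out in \cite[\S 6]{Hutchings} in the non-symmetric setting, and since the Real symmetry $e_n(-t) = N e_n(t)$ has already been absorbed into Lemmas \ref{le1} and \ref{le2}, the remaining writhe and positivity-of-intersection arguments are symmetry-insensitive and translate directly.
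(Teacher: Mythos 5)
Your proposal follows the same route the paper indicates: the paper simply states ``By same proof as Hutchings, we can prove'' and you have fleshed out exactly that argument, replacing the non-symmetric asymptotic expansion and winding bound by the Real analogues (Lemmas \ref{le1} and \ref{le2}) and then running Hutchings' torus-braid approximation and positivity-of-intersection reasoning unchanged. The details — dominant eigenfunction, winding $\eta_i \geq \mu_1(\xi_i)+\tfrac{1}{2}$, linking of torus braids $\min(q_1\eta_2,q_2\eta_1)$, and monotonicity of $\min$ — are all consistent with what the paper's one-line citation implies.
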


Now we can give the proof of theorem \ref{Par}. Using the same reasoning as in \cite[\S 6.4, \S 6.5]{Hutchings}, we first reduce the theorem to the following lemma

\begin{lemma}
	Let $\beta$ be a brake orbit, $\{q_1, \ldots, q_k\}$ is a partition of $n$, i.e. $q_i \in {\bf N}^+, 1 < i < k, q_1 + \cdots + q_k = n$. Let $\xi$ be the braid corresponding to the partition at the negative ends. Then
	\begin{equation}
		\frac{1}{2}\omega(\xi) + \sum_{i = 1}^{k}\mu_1(\beta^{q_i}) \geq \sum_{i =1}^{n}\mu_1(\beta^i)
	\end{equation}
	Equality holds only if $\{q_1, \ldots, q_k\}$ is a partition as in theorem \ref{Par}.
\end{lemma}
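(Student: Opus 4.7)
The plan is to mimic Hutchings' partition analysis in \cite[\S 6.4, \S 6.5]{Hutchings}, with the Conley--Zehnder index replaced by $\mu_1$ throughout and the Real iteration formulae of Section 2 driving the case analysis. First I would decompose $\xi$ into the sub-braids $\xi_1, \ldots, \xi_k$ corresponding to the parts $q_1, \ldots, q_k$, so that
\begin{equation*}
\omega(\xi) = \sum_{i=1}^k \omega(\xi_i) + 2\sum_{1 \leq i < j \leq k}\ell(\xi_i, \xi_j).
\end{equation*}
Applying Lemma \ref{wind} componentwise to bound each $\omega(\xi_i)$ from below by $(q_i-1)(\mu_1(\beta^{q_i}) + \tfrac12)$, and applying the preceding linking-number lemma pairwise, reduces the claimed inequality to a purely numerical statement about the values $\mu_1(\beta^{q_i})$, which one then minimizes over all partitions of $n$.

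The elliptic case is essentially free: the identity $\mu_1(\beta^k) = \frac{1}{2}\mu_{CZ}(\beta^k) = \lfloor k\theta\rfloor + \frac12$ shows that the inequality one must prove is exactly one half of the classical ECH partition inequality for an elliptic orbit of rotation number $\theta$, so both the bound and the characterization of the minimizer as Hutchings' $\theta$-partition transfer verbatim.

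For the four hyperbolic subcases I would use the explicit iteration formulae \eqref{hn1}--\eqref{hp2} in turn. Pick the trivialization that makes $\mu_1(\beta)$ as small as possible, substitute the formula for $\mu_1(\beta^{q_i})$ into the reduced inequality, and study it as a combinatorial optimization over partitions of $n$. Positive hyperbolic type two gives a strictly super-additive index, which (as in classical ECH) forces the partition $(1, \ldots, 1)$; positive hyperbolic type one gives a strictly sub-additive index $\mu_1(\beta^k) = k\mu_1(\beta) + \tfrac{1-k}{2}$, which forces concentration of all mass in a single part, hence the partition $(n)$. The negative hyperbolic cases mix the two behaviors according to the parities of the $q_i$, and the minimizer then depends on the parity of $n$ in precisely the way recorded in Theorem \ref{Par}.

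The main obstacle will be extracting the precise equality conditions in the negative hyperbolic cases. Lemma \ref{wind} attains equality only under arithmetic restrictions on each $q_i$ (odd, or $q_i \equiv 0 \bmod 4$ in type one; odd, or $q_i = 2$ in type two), and the linking bound has its own sharpness requirements; one must therefore combine the writhe analysis with these parity conditions to exclude every partition not on the list while confirming that the listed one makes every bound sharp simultaneously. I expect this to require a case split on $n \bmod 4$ together with an induction on the number of parts $k$, showing that combining any two parts of the same parity, or splitting a part $2$ into $1+1$ in type two, strictly improves at least one of the writhe or linking terms.
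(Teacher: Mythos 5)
Your broad plan — Hutchings' writhe/linking decomposition plus a case analysis by orbit type driven by the Real iteration formulae — is exactly the reduction the paper uses. Where you diverge is in how the resulting numerical inequality is closed. After applying Lemma \ref{wind} and the linking bound to arrive at the reduced form
\begin{equation*}
\sum_{i = 1}^{k}\Bigl(\mu_1(\beta^{q_i})- \tfrac{1}{2}\rho_i\Bigr) + \tfrac{1}{2}\sum_{i, j}^{k}\min(q_i \rho_j, q_j \rho_i) \;\geq\; \sum_{i = 1}^{n}\mu_1(\beta^i),
\end{equation*}
the paper does not run a fresh combinatorial optimization. Instead it records that under the trivialization $\mu_1(\beta)=\tfrac12$ both sides differ from one half of the corresponding quantities $\mathrm{Left}_0$ and $\mathrm{Right}_0$ in Hutchings' classical inequality (his equation (49), with $\rho'_i = \lceil \mu_{CZ}(\beta^{q_i})/2\rceil$) by explicit correction terms: nothing in the elliptic case, $-\tfrac{k}{2}$ and $-\tfrac{n}{2}$ in positive hyperbolic type two, $-\tfrac12\#\{\text{even }q_i\}$ and $-\tfrac12\lfloor n/2\rfloor$ in negative hyperbolic type two. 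The Real inequality in those cases then folds directly into the already-proven nonsymmetric one, and the equality characterization is inherited for free. This is a genuine shortcut your proposal does not anticipate: you would re-prove (or re-argue) the classical coercivity from scratch via super/sub-additivity and an induction on $k$.

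Your plan would in principle succeed but is both more laborious and less precise about the hardest equality case, negative hyperbolic type one. The paper disposes of that case not by induction on $k$ or a case split on $n \bmod 4$ but by substituting $\mu_1(\beta^r) = r/2$ ($r$ odd), $r/2 + 1/2$ ($r$ even) and grinding out a closed-form lower bound on the left side, $\mathrm{Left} \geq \tfrac{n(n+1)}{4} + \tfrac{n}{4} + \tfrac{k(k-2)}{4}$, comparing it with the explicit right side and then finishing with a short exhaustion over $k \in \{1,2\}$ and the parity of $n$ and of the parts. Your "combine like-parity parts / split a $2$ into $1+1$" heuristic points in the right direction, but as written it does not obviously isolate the equality partitions $(n)$ for $n$ odd and $(1,n-1)$ for $n$ even, and you would still owe a closed-form verification that those specific partitions make every intermediate bound (writhe, linking, and the $\rho_i$ estimate) simultaneously sharp. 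So: same skeleton, different and bulkier endgame, with the equality analysis being the spot you would most need to tighten.
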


By the same reasoning in \cite[\S 6.4, \S 6.5]{Hutchings} as well, we have
 \begin{equation}\label{f}
 \frac{1}{2}\omega(\xi) + \sum_{i = 1}^{k}\mu_1(\beta^{q_i}) \geq \sum_{i = 1}^{k}(\mu_1(\beta^{q_i})- \frac{1}{2}\rho_i) + \frac{1}{2}\sum_{i, j}^{k}\min(q_i \rho_j, q_j \rho_i)
 \end{equation}
This lemma is equivalent to the following form.

\begin{lemma}
		Let $\beta$ be a brake orbit, $\{q_1, \ldots, q_k\}$ is a partition of $n$, i.e. $q_i \in {\bf N}^+, 1 < i < k, q_1 + \cdots + q_k = n$. Let $\rho_i = \mu_1(\beta^{q_i}) + \frac{1}{2}$. Then
		\begin{equation}\label{RECH}
		\sum_{i = 1}^{k}(\mu_1(\beta^{q_i})- \frac{1}{2}\rho_i) + \frac{1}{2}\sum_{i, j}^{k}\min(q_i \rho_j, q_j \rho_i) \geq \sum_{i = 1}^{n}\mu_1(\beta^i)
		\end{equation}
		Equality holds only if $\{q_1, \ldots, q_k\}$ is a partition as in theorem \ref{Par}.
\end{lemma}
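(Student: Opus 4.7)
The plan is to split into cases based on the type of the brake orbit $\beta$, namely elliptic and the four hyperbolic sub-types, and in each case substitute the explicit iteration formulae of Section~2 into both sides of (\ref{RECH}). Since $\mu_1(\beta^k)$ is affine in $k$ up to a parity correction, after expansion the inequality becomes a purely combinatorial statement about the partition $(q_1, \ldots, q_k)$ of $n$. In every case the goal is to rewrite $\mathrm{LHS} - \mathrm{RHS}$ as a sum of nonnegative combinatorial quantities and then identify when all of them vanish.

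For the elliptic case, $\mu_1(\beta^k) = \tfrac{1}{2}\mu_{CZ}(\beta^k)$, so (\ref{RECH}) is exactly one half of the classical Hutchings inequality for ECH on a single elliptic orbit (the rescaling $\rho_i \mapsto 2\rho_i$ is self-consistent in both the $\tfrac{1}{2}\sum\rho_i$ term and the $\min(q_i\rho_j, q_j\rho_i)$ term). Hence the minimizing partition coincides with Hutchings's partition for parameter $\theta$. For positive hyperbolic type two one has $\rho_i = q_i(\mu_1(\beta) + \tfrac{1}{2})$, so $\min(q_i\rho_j, q_j\rho_i) = q_i q_j(\mu_1(\beta) + \tfrac{1}{2})$ collapses and $\mathrm{LHS} - \mathrm{RHS}$ simplifies to $\tfrac{1}{2}(n-k)$, forcing the unique equality partition $(1, \ldots, 1)$. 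For positive hyperbolic type one, the corresponding difference reduces to $\tfrac{1}{2}(\sum_{i,j}\min(q_i, q_j) - n)$, which is nonnegative via the identity $\sum_{i,j}\min(q_i, q_j) = \sum_{m\geq 1}|\{i : q_i \geq m\}|^2 \geq \sum_{m\geq 1}|\{i : q_i \geq m\}| = n$, with equality forcing $k = 1$ and hence the partition $(n)$.

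The hard part is the two negative hyperbolic cases, because $\mu_1(\beta^k)$ and therefore $\rho_k$ carry an extra $\pm \tfrac{1}{2}$ correction depending on the parity of $k$, so $\min(q_i\rho_j, q_j\rho_i)$ now splits into four regimes by the parities of $q_i$ and $q_j$. After separating the parts into odd count $a$ and even count $b$ (so that $a \equiv n \pmod{2}$) and expanding carefully, the inequality reduces in the type-one case to an inequality of the shape $2C \geq a + 2\lfloor n/2\rfloor$ and in the type-two case to a dual inequality $2C + 2\lfloor n/2\rfloor \geq a + 2b$, where $C$ aggregates the $\min(q_i, q_j)$-contributions from pairs of odd parts. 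Verifying these two combinatorial inequalities under the parity constraint, and then pinpointing the unique equality partitions, namely $(n)$ or $(1, n{-}1)$ in type one and $(2, \ldots, 2)$ or $(2, \ldots, 2, 1)$ in type two depending on the parity of $n$, requires a case-by-case comparison showing that any deviation from the listed partitions strictly increases $\mathrm{LHS} - \mathrm{RHS}$. This parity-driven optimization is where the main bookkeeping lies.
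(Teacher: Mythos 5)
Your reductions for the elliptic and the two positive hyperbolic cases are correct and are essentially what the paper does: the elliptic case halves the nonsymmetric Hutchings inequality because $\rho_i = \rho'_i$, positive hyperbolic type two telescopes to $\mathrm{LHS}-\mathrm{RHS}=\tfrac{1}{2}(n-k)$, and positive hyperbolic type one gives $\tfrac{1}{2}\bigl(\sum_{i,j}\min(q_i,q_j)-n\bigr)$. (I verified that your formula $\rho_i = q_i(\mu_1(\beta)+\tfrac12)$ in the type-two case and $\rho_i = q_i(\mu_1(\beta)-\tfrac12)+1$ in the type-one case carry through trivialization-independently to these same differences.)

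The genuine gap is in the two negative hyperbolic cases, which you explicitly defer as ``parity-driven bookkeeping.'' These are the hardest subcases and your sketch is too coarse to be checked: the stated shapes ``$2C\geq a + 2\lfloor n/2\rfloor$'' and ``$2C + 2\lfloor n/2\rfloor\geq a+2b$'' are not derived, $C$ is not pinned down, and no argument is given to rule out equality off the listed partitions. You also miss that negative hyperbolic type two admits the same shortcut as the elliptic case: with the normalization $\mu_1(\beta)=\tfrac12$ one has $\rho_i=\lceil q_i/2\rceil = \lceil \mu_{CZ}(\beta^{q_i})/2\rceil = \rho'_i$, so $\mathrm{Left}=\tfrac12\mathrm{Left}_0 - \tfrac12\#\{\text{even }q_i\}$ and $\mathrm{Right}=\tfrac12\mathrm{Right}_0 - \tfrac12\lfloor n/2\rfloor$, and the inequality follows from the nonsymmetric Hutchings inequality together with $\#\{\text{even }q_i\}\leq\lfloor n/2\rfloor$ --- no fresh parity optimization needed. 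Only negative hyperbolic type one really requires a hands-on estimate (the paper bounds $\rho_i\geq \tfrac{q_i}{2}+\tfrac12$ and then splits on $k=1$, $k=2$ with $n$ odd/even and on the count of odd parts, $k>2$). As written, your proposal leaves both negative hyperbolic cases unproved: the framework is sound, but these are not routine bookkeeping and need to be carried out to establish the lemma.
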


All we need to prove is the above lemma, because numbers in the partition of theorem \ref{Par} in each case belong to the numbers in lemma\ref{wind}, the inequality (\ref{f}) can take equality.

\begin{proof}
Firstly we know the following inequality from \cite[equation (49)]{Hutchings} for a nonsymmetric pseudoholomorphic curve. Let $\rho'_i = \lceil \frac{\mu_{CZ}(\beta^{q_i})}{2} \rceil$
\begin{equation}
\sum_{i = 1}^{k}(\mu_{CZ}(\beta^{q_i})- \rho'_i) + \sum_{i, j}^{k}\min(q_i \rho'_j, q_j \rho'_i) \geq \sum_{i = 1}^{n}\mu_{CZ}(\beta^i)
\end{equation}
Equality holds only if the partition satisfies the ECH partition.

We denote the left and right side of the above inequality by $\text{Left}_0$ and $\text{Right}_0$, and the left and right side of the inequality (\ref{RECH}) by Left and Right.

Secondly the validity of the inequality does not depend on the choice of the trivialization, because we take a different trivialization, both sides will add an integer multiple of $\frac{1}{2}(n^2 + n)$. Therefore we can choose the trivialization in hyperbolic case such that $\mu_1(\beta) = \frac{1}{2}$.

In the elliptic case, $\mu_1(\beta^k) = [k\theta] + \frac{1}{2}, \mu_{CZ} = 2[k\theta] + 1$. Therefore $\mu_1(\beta^{q_i}) = \frac{1}{2}\mu_{CZ}(\beta^{q_i}) = [q_i\theta] + \frac{1}{2}, \rho_i = \rho'_i = [q_i\theta] + 1$. The Left and Right are both half the counterpart number $\text{Left}_0, \text{Right}_0$ from the nonsymmetric case. Therefore we get the result from nonsymmetric case.
	
In the hyperbolic positive type two case, because of our choice of trivialisation $\mu_1(\beta) = \frac{1}{2}$, we have $\mu_{CZ}(\beta) = 2$ by relations $\mu_{CZ}(\beta) = \mu_1(\beta) + \mu_2(\beta), \mu_1(\beta) - \mu_2(\beta) = -1$. From our iteration formula for hyperbolic positive type two (\ref{hp2}), we have $\mu_1(\beta^k) = k\mu_1(\beta) + \frac{k-1}{2} = k - \frac{1}{2}$. At the same time we have $\mu_{CZ}(\beta^k) = k\mu_{CZ}(\beta) = 2k $. Hence we get $\mu_1(\beta^k) = \frac{1}{2}\mu_{CZ}(\beta^k) - \frac{1}{2}$. By definition $\rho_i = \mu_1(\beta^{q_i}) + \frac{1}{2} = q_i, \rho' _i= \lceil \frac{\mu_{CZ}(\beta^{q_i})}{2} \rceil =q_i$, we get $\rho'_i = \rho_i$. Using the relations $\mu_1(\beta^k) = \frac{1}{2}\mu_{CZ}(\beta^k) - \frac{1}{2}$ and $\rho'_i = \rho_i$, we get $\text{Left} = \frac{1}{2}\text{Left}_0 - \frac{k}{2}$ and $\text{Right} = \frac{1}{2}\text{Right}_0 - \frac{n}{2}$ by definition. Because $\text{Left}_0 \geq \text{Right}_0$ and $-\frac{k}{2} \geq -\frac{n}{2}$, equality holds only if the original partition holds.
	
In the hyperbolic positive type one case, because of our choice of trivialisation $\mu_1(\beta) = \frac{1}{2}$. From our iteration formula for hyperbolic positive type two (\ref{hp1}), we have $\mu_1(\beta^k) = k\mu_1(\beta) + \frac{1 - k}{2} = \frac{1}{2}$. Then Left $= \frac{1}{2}\sum_{i, j =1}^{k}\min(q_i, q_j)$ by definition. Without loss of generality, we can choose $q_1 \leq \ldots \leq q_k$, and Left $=\frac{1}{2} ((2k -1) q_1 + (2k - 3)q_2 + \cdots + q_k)$. At the same time, Right $= \frac{n}{2}$ by definition. Because $q_1 + \cdots q_k = n$, Left $\geq \frac{n}{2}$. The equality holds and only if $k = 1$.

In the hyperbolic negative type two case, because of our choice of trivialisation $\mu_1(\beta) = \frac{1}{2}$. By (\ref{hn2}), we have
\begin{equation*}
  \mu_1(\beta^r) = \left\{
  \begin{array}{ll}
    \frac{r}{2}, & \hbox{$r$ is odd;} \\
    \frac{r}{2} - \frac{1}{2}, & \hbox{$r$ is even.}
  \end{array}
\right.
\end{equation*}
 At the same time we have $\mu_{CZ}(\beta^k) = k\mu_{CZ}(\beta) = k $.
 By definition we have
 \begin{equation*}
  \rho_i = \mu_1(\beta^{q_i}) + \frac{1}{2} = \left\{
  \begin{array}{ll}
    \frac{q_i}{2} +\frac{1}{2}, & \hbox{$q_i$ is odd;} \\
    \frac{q_i}{2}, & \hbox{$q_i$ is even.}
  \end{array}
\right.
\end{equation*}
So we get $\rho_i = \lceil \frac{\mu_{CZ}(\beta^{q_i})}{2} \rceil$. Then by definition $\rho' _i= \lceil \frac{\mu_{CZ}(\beta^{q_i})}{2} \rceil $ which is equal to $\rho_i$. We have the relations: $\mu_1(\beta^k) = \frac{1}{2}\mu_{CZ}(\beta^k)$, if $k$ is odd, $\mu_1(\beta^k) = \frac{1}{2}\mu_{CZ}(\beta^k) - \frac{1}{2}$, if $k$ is even and $\rho'_i = \rho_i$. We get $\text{Left} = \frac{1}{2}\text{left}_0 - \frac{1}{2}\#\text{even number}$, where $\#\text{even number}$ means the cardinality of even number in the partition number, and $\text{Right} = \frac{1}{2}\text{Right}_0 - \frac{1}{2} [\frac{n}{2}]$ by definition. Because $\text{Left}_0 \geq \text{Right}_0$ and $-\frac{1}{2}\#\text{even number} \geq -  [\frac{n}{2}]$. The equality holds only if the original partition holds.
	
In the hyperbolic negative type one case, because of our choice of trivialisation $\mu_1(\beta) = \frac{1}{2}$. By (\ref{hn1}) we have
\begin{equation*}
  \mu_1(\beta^r) = \left\{
  \begin{array}{ll}
    \frac{r}{2}, & \hbox{$r$ is odd;} \\
    \frac{r}{2} + \frac{1}{2}, & \hbox{$r$ is even.}
  \end{array}
\right.
\end{equation*}
By definition we have
\begin{equation*}
  \rho_i = \mu_1(\beta^{q_i}) + \frac{1}{2} =\left\{
  \begin{array}{ll}
    \frac{q_i}{2} + \frac{1}{2}, & \hbox{$r$ is odd;} \\
    \frac{q_i}{2} + 1, & \hbox{$r$ is even.}
  \end{array}
\right.
\end{equation*}
$\mu_1(\beta^{q_i}) - \frac{1}{2}\rho_i = \frac{q_i}{4} - \frac{1}{4}$, if $q_i$ is odd; $\mu_1(\beta^{q_i}) - \frac{1}{2}\rho_i = \frac{q_i}{4}$, if $q_i$ is even. So Left $= \frac{1}{4}(q_1 + \cdots + q_k) - \frac{1}{4}(\# \text{odd number}) + \frac{1}{2}\sum_{i, j = 1}^{k}\min(q_i\rho_j, q_j\rho_i)$ by definition, where $\#\text{odd number}$ means the cardinality of odd number in the partition number. Since $\rho_i \geq \frac{q_i}{2} + \frac{1}{2}$, we have $\frac{1}{2}\min(q_i\rho_j, q_j\rho_i) \geq \frac{1}{2}\min(q_i(\frac{q_j}{2} + \frac{1}{2}), q_j(\frac{q_i}{2} + \frac{1}{2}))$, Left $\geq \frac{n}{4} - \frac{1}{4}(\# \text{odd number}) + \frac{1}{4}\sum_{i, j = 1}^{k}(q_i q_j) + \frac{1}{2}\sum_{i, j = 1}^{k}\min(\frac{q_i}{2}, \frac{q_j}{2})$. Without loss of generality, we can choose $q_1 \leq \ldots \leq q_k$, then Left $\geq \frac{n}{4} - \frac{1}{4}(\# \text{odd number}) + \frac{n^2}{4} + \frac{1}{4}((2k-1)q_1 + (2k-3)q_2 \cdots + q_k)$. By relations $q_1+\cdots + q_k = n$, $q_i \geq 1$ and $\# \text{odd number} \leq k$, we get $\text{Left} \geq \frac{n}{4} - \frac{1}{4}k + \frac{n^2}{4} + \frac{1}{4}n + \frac{1}{4}(k^2 - k) = \frac{n(n+1)}{4} + \frac{n}{4} + \frac{k(k-2)}{4}$.
At the same time by definition
\begin{equation*}
  \text{Right} = \frac{n(n+1)}{4} + \frac{1}{2}[\frac{n}{2}] = \left\{
  \begin{array}{ll}
    \frac{n(n+1)}{4} + \frac{n}{4} - \frac{1}{4}, & \hbox{$n$ is odd;} \\
   \frac{n(n+1)}{4} + \frac{n}{4}, & \hbox{$n$ is even.}
  \end{array}
\right.
\end{equation*}

If $k > 2$, we get Left $>$ Right. And if $k=2$ and $n$ is odd, we get Left $>$ Right.

When $k=2$, $n$ is even and $\# \text{odd number} = 0$,  Left $\geq \frac{n}{4} - \frac{1}{4}(\# \text{odd number}) + \frac{n^2}{4} + \frac{1}{4}(3q_1 + q_2) = \frac{n(n+1)}{4} + \frac{n}{4} + \frac{q_1}{2} >$ Right.

When $k=2$, $n$ is even and $\# \text{odd number} = 2$, Left $= \frac{n}{4} - \frac{1}{4}(\# \text{odd number}) + \frac{n^2}{4} + \frac{1}{4}(3q_1 + q_2) = \frac{n(n+1)}{4} + \frac{n}{4} + \frac{q_1 - 1}{2} \geq$ Right, the equality holds only if $q_1 = 1$.

When $k = 1$,
\begin{equation*}
\text{Left} = \mu_1(\beta^n) - \frac{1}{2}\rho_1 + \frac{1}{2}n\rho_1 = \left\{
\begin{array}{ll}
\frac{n(n+1)}{4} + \frac{n}{4} - \frac{1}{4}, & \hbox{$n$ is odd;} \\
\frac{n(n+1)}{4} + \frac{n}{4} + \frac{n}{4}, & \hbox{$n$ is even.}
\end{array}
\right.
\end{equation*}
Only if $n$ is odd, Left $=$ Right.

\end{proof}

\section{Index inequality of Real multiple covers}
Let $u$ be a Real pseudoholomorphic curve of genus $0$, which is a Real branched cover of a somewhere injective Real pseudoholomorphic curve $\bar{u}$. Let $D$ denote the covering multiplicity, and $B$ the total branch number of this cover. Suppose $u$ has $k$ positive symmetric punctures, $l$ negative symmetric punctures, $m$ pairs of nonsymmetric positive punctures, and $n$ pairs of nonsymmetric negative punctures. Similarly suppose $\bar{u}$ has $\bar{k}$ positive symmetric punctures, $\bar{l}$ negative symmetric punctures, $\bar{m}$ pairs of nonsymmetric positive punctures, and $\bar{n}$ pairs of nonsymmetric negative punctures.

Suppose that $\bar{u}$ has symmetric positive ends at brake orbits $\alpha_1, \ldots, \alpha_{\bar{k}}$, symmetric negative ends at brake orbits $\beta_1, \ldots, \beta_{\bar{l}}$, pairs of symmetric positive ends $\gamma_1, \ldots, \gamma_{\bar{m}}$, and pairs of symmetric negative ends $\delta_1, \ldots, \delta_{\bar{n}}$. Then by \cite[Main Theorem]{ZZ}
\begin{equation*}
\text{ind}_R(\bar{u}) = -\frac{1}{2}\chi(\bar{u}) + \sum_{i=1}^{\bar{k}}\mu_1(\alpha_i) - \sum_{j=1}^{\bar{l}}\mu_1(\beta_j) + \sum_{p=1}^{\bar{m}}\mu_{CZ}(\gamma_{q}) - \sum_{q=1}^{\bar{n}}\mu_{CZ}(\delta_{q})
\end{equation*}
We can get the $\text{ind}_R(u)$ in the same way
\begin{equation*}
\text{ind}_R(u) = -\frac{1}{2}\chi(u) + \sum_{i=1}^{k}\mu_1(\zeta_i) - \sum_{j=1}^{l}\mu_1(\eta_j) + \sum_{p=1}^{m}\mu_{CZ}(\xi_{q}) - \sum_{q=1}^{n}\mu_{CZ}(\chi_{q})
\end{equation*}
Where the symmetric positive end of $\bar{u}$, $\alpha_*$ is covered by symmetric positive ends of $u$, $\zeta_*$ or pair of nonsymmetric positive ends of $u$, $\xi_*$ with total multiplicity $D$, each negative end of $\bar{u}$, $\beta$ is covered by  symmetric negative ends of $u$, $\eta_*$ or pair of nonsymmetric negative ends of $u$, $\chi_*$ in total multiplicity $D$ as well. A pair of positive nonsymmetric ends $\gamma_*$ can only be covered by pair of nonsymmetric positive ends $\xi_*$ with total multiplicity $D$, a pair of nonsymmetric negative ends $\delta_*$ can only be covered by pair of nonsymmetric negative ends $\chi_*$ with total multiplicity $D$.

Thanks to our index iteration formulae,  we can get the following inequalities.

\begin{lemma}\label{le3}
Let $\alpha$ be a brake orbit, then
\begin{equation}\label{1}
k\mu_1(\alpha) - \frac{k-1}{2} \leq \mu_1(\alpha^k) \leq k\mu_1(\alpha) + \frac{k-1}{2}
\end{equation}
\end{lemma}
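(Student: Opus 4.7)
The plan is to verify the inequality case-by-case using the five iteration patterns listed in the classification theorem from Section 1.3, since every nondegenerate brake orbit in dimension $3$ falls into exactly one of them. For each case the formula for $\mu_1(\alpha^k)$ is explicit in terms of $\mu_1(\alpha)$ (and, in the elliptic case, the rotation angle), so the inequality reduces to an arithmetic check.

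First I would handle the four hyperbolic cases, which are essentially immediate. In positive hyperbolic type one, the formula gives $\mu_1(\alpha^k) - k\mu_1(\alpha) = \frac{1-k}{2}$, so the lower bound is attained with equality; in positive hyperbolic type two, $\mu_1(\alpha^k) - k\mu_1(\alpha) = \frac{k-1}{2}$, attaining the upper bound. In negative hyperbolic type one (respectively type two), the difference $\mu_1(\alpha^k) - k\mu_1(\alpha)$ equals $0$ for odd $k$ and $\frac{1}{2}$ (respectively $-\frac{1}{2}$) for even $k$; in all these subcases $|\mu_1(\alpha^k) - k\mu_1(\alpha)| \le \frac{1}{2} \le \frac{k-1}{2}$ once $k\ge 2$, and for $k=1$ the inequality is trivial.

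The remaining case is elliptic, where $\mu_1(\alpha^k) = \lfloor k\theta \rfloor + \frac{1}{2}$ and $\mu_1(\alpha) = \lfloor \theta \rfloor + \frac{1}{2}$. Substituting into the claimed bounds reduces them to the elementary floor inequality
\begin{equation*}
k\lfloor \theta \rfloor \;\le\; \lfloor k\theta \rfloor \;\le\; k\lfloor \theta \rfloor + (k-1),
\end{equation*}
which is standard (write $\theta = \lfloor \theta\rfloor + \{\theta\}$ with $\{\theta\}\in[0,1)$ and note $0 \le \lfloor k\{\theta\}\rfloor \le k-1$).

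There is no real obstacle here: the work has already been done in establishing the five iteration formulae, and this lemma is a direct consequence. The only minor point to watch is that the elliptic case in fact gives a sharper bound $|\mu_1(\alpha^k) - k\mu_1(\alpha)| \le \frac{k-1}{2}$ that is generally strict, while the positive hyperbolic cases show that the bounds of the lemma are actually sharp, which is useful to record for the applications in the next section.
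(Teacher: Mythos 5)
Your proof is correct and follows essentially the same case-by-case route as the paper: for the four hyperbolic types one reads the bound directly off the iteration formulae (\ref{hn1}), (\ref{hn2}), (\ref{hp1}), (\ref{hp2}), and for the elliptic case one reduces to the elementary inequality $0\le \lfloor k\theta\rfloor - k\lfloor\theta\rfloor \le k-1$. The only difference is that you spell out the hyperbolic arithmetic that the paper simply asserts, and you additionally observe that the two positive hyperbolic types saturate the two bounds, which is a nice remark but not part of the paper's argument.
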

\begin{proof}
	If $\alpha$ is hyperbolic, from equations (\ref{hn1})( \ref{hn2})( \ref{hp1})( \ref{hp2}) we know
	\begin{equation*}
	k\mu_1(\alpha) - \frac{k-1}{2} \leq \mu_1(\alpha^k) \leq k\mu_1(\alpha) + \frac{k-1}{2}
	\end{equation*}
	If $\alpha$ is elliptic, $\alpha = R(2\pi\theta')$, then
	\begin{align*}
	\mu_1(\alpha^k) = [k\theta] + \frac{1}{2} \\
	k\mu_1(\alpha)= k[\theta] + \frac{k}{2}
	\end{align*}
	Because $0 \leq [k\theta] - k[\theta] \leq k-1$, we have $|\mu_1(\alpha^k) - k\mu_1(\alpha)| \leq \frac{k -1}{2}$
\end{proof}

Using iteration formulae for Conley-Zehnder index and relations between $\mu_1$ and $\mu_{CZ}$ we obtain
\begin{lemma}\label{le4}
\begin{align}
\begin{cases}
\mu_{CZ}(\alpha^{k}) = 2k\mu_1(\alpha), \alpha\ \text{is negative hyperbolic}\\
\mu_{CZ}(\alpha^{k}) = 2k\mu_1(\alpha) - k, \alpha\ \text{is positive hyperbolic type one}\\
\mu_{CZ}(\alpha^{k}) = 2k\mu_1(\alpha) + k, \alpha\ \text{is positive hyperbolic type two}\\
1-k \leq \mu_{CZ}(\alpha^{k}) - 2k\mu_1(\alpha) \leq k-1, \alpha\ \text{is elliptic}
\end{cases}
\end{align}
\end{lemma}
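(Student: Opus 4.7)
The plan is to read the four cases directly off the iteration formulae already classified in the theorem of Section 1.3 (and rederived in Section 2 as equations (\ref{hn1})--(\ref{hp2})), and in the elliptic case to apply the same floor-function bound that appeared in the proof of Lemma \ref{le3}. This lemma is really a bookkeeping corollary of the five-case classification, packaged in a form convenient for the multiple-cover index inequality that follows.

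For the three hyperbolic cases, every line of the classification records the identity $\mu_{CZ}(\alpha^k) = k\,\mu_{CZ}(\alpha)$, so it suffices to substitute the first-period relation between $\mu_{CZ}(\alpha)$ and $\mu_1(\alpha)$. When $\alpha$ is negative hyperbolic of either type, $\mu_{CZ}(\alpha) = 2\mu_1(\alpha)$, hence $\mu_{CZ}(\alpha^k) = 2k\mu_1(\alpha)$. When $\alpha$ is positive hyperbolic type one, $\mu_{CZ}(\alpha) = 2\mu_1(\alpha) - 1$, hence $\mu_{CZ}(\alpha^k) = 2k\mu_1(\alpha) - k$. When $\alpha$ is positive hyperbolic type two, $\mu_{CZ}(\alpha) = 2\mu_1(\alpha) + 1$, hence $\mu_{CZ}(\alpha^k) = 2k\mu_1(\alpha) + k$. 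Each of these is a one-line substitution.

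For the elliptic case, write $\Psi_\alpha(\tau) = R(2\pi\theta')$ and invoke the formulae $\mu_{CZ}(\alpha^k) = 2[k\theta] + 1$ and $\mu_1(\alpha) = [\theta] + \tfrac{1}{2}$ from the classification. Subtracting $2k\mu_1(\alpha) = 2k[\theta] + k$ gives
\begin{equation*}
\mu_{CZ}(\alpha^k) - 2k\mu_1(\alpha) \;=\; 2\bigl([k\theta] - k[\theta]\bigr) + 1 - k.
\end{equation*}
Decomposing $\theta = [\theta] + \{\theta\}$ with $\{\theta\} \in [0,1)$ identifies $[k\theta] - k[\theta] = [k\{\theta\}]$, which lies in $\{0,1,\dots,k-1\}$. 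Plugging this range into the display gives the two-sided bound $1-k \leq \mu_{CZ}(\alpha^k) - 2k\mu_1(\alpha) \leq k-1$, completing the elliptic case. There is no real obstacle; the only mild care needed is to keep the trivialization fixed between the two indices so that the linear relation $\mu_{CZ}(\alpha) = \mu_1(\alpha) + \mu_2(\alpha)$ is used consistently, which is automatic from the conventions established at the start of Section 2.
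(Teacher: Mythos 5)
Your proposal is correct and follows essentially the same route as the paper: the hyperbolic cases are read off from the iteration identity $\mu_{CZ}(\alpha^k)=k\mu_{CZ}(\alpha)$ together with the first-period relation between $\mu_{CZ}(\alpha)$ and $\mu_1(\alpha)$, and the elliptic case reduces to the bound $0\le[k\theta]-k[\theta]\le k-1$. The paper's own proof simply states this inequality outright in the form $0\le 2[k\theta]-2k[\theta]\le 2k-2$; your fractional-part decomposition is a minor elaboration of the same step.
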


\begin{proof}
	If $\alpha$ is hyperpolic, from iteration formulae it is obvious.
	
	If $\alpha$ is elliptic, $\alpha = R(2\pi\theta')$, then $\mu_{CZ}(\alpha^{k}) = 2[k\theta] + 1, \mu_1(\alpha) = [\theta] + \frac{1}{2}$. From the inequality $0 \leq 2[k\theta] - 2k[\theta] \leq 2k-2$, we get the result.
	
\end{proof}

Let $\#_1$ denote the number of positive pairs of ends of $u$, $\xi_* = \alpha^k$, where $\alpha$ is a brake orbit of hyperbolic positive type one,  $\xi_*$ covers a positive symmetric end $\alpha$ at positive ends; Let $\#_2$ denote the number negative pairs of ends of $u$, $\chi_* = \beta^k$, $\beta$ is a brake orbit of hyperbolic positive type two,  $\chi_*$ covers a positive symmetric end $\beta$ at negative ends. For such $\xi_*$ and $\chi_*$, we have by equation (\ref{le4})
\begin{align}
\mu_{CZ}(\xi_*) = \mu_{CZ}(\alpha^k) = 2k\mu_1(\alpha) - (k-1) -1, \label{3}\\
\mu_{CZ}(\chi_*) = \mu_{CZ}(\beta^k) = 2k\mu_1(\beta) + (k-1) +1 \label{4}
\end{align}

By the iteration formula in \cite[\S 10.1 equation (19)]{Long}, we have
\begin{equation}\label{5}
k\mu_{CZ}(\alpha) - (k-1) \leq \mu_{CZ}(\alpha^{k}) \leq k\mu_{CZ}(\alpha) + (k-1)
\end{equation}

By Riemann-Hurwitz we have
\begin{equation*}
\chi(u) = D\chi(\bar{u}) - B
\end{equation*}
which means that
\begin{equation*}
2-k-l-2m-2n = D(2 - \bar{k} - \bar{l} - 2\bar{m} - 2\bar{n}) - B
\end{equation*}

Suppose a brake orbit $\beta$ is covered by $s$ symmetric orbits, $(\beta^{p_1}, \ldots, \beta^{p_s})$ and $t$ pairs of nonsymmetric orbits, $(\beta^{q_1}, \ldots, \beta^{q_t})$ with total covering multiplicity $D = \sum_{i=1}^{s}p_i + \sum_{j = 1}^{t}2q_j$, then if $\beta$ is not hyperbolic positive type two, by equations (\ref{le3})( \ref{le4})
\begin{align*}
&\mu_1(\beta^{p_1}) + \ldots + \mu_1(\beta^{p_s}) + \mu_{CZ}(\beta^{q_1}) + \ldots + \mu_{CZ}(\beta^{q_t}) \\
&\leq \sum_{i = 1}^{s}(p_i\mu_1(\beta) + \frac{p_i - 1}{2}) + \sum_{j = 1}^{t}(2q_j\mu_1(\beta) + q_j - 1)\\
&= D\mu_1(\beta) + \frac{D}{2} - \frac{s}{2} -t
\end{align*}
and if $\beta$ is not hyperbolic positive type one, by equations (\ref{le3})( \ref{le4})
\begin{align*}
&\mu_1(\beta^{p_1}) + \ldots + \mu_1(\beta^{p_s}) + \mu_{CZ}(\beta^{q_1}) + \ldots + \mu_{CZ}(\beta^{q_t}) \\
&\geq \sum_{i = 1}^{s}(p_i\mu_1(\beta) - \frac{p_i - 1}{2}) + \sum_{j = 1}^{t}(2q_j\mu_1(\beta) - q_j + 1)\\
&= D\mu_1(\beta) - \frac{D}{2} + \frac{s}{2} + t
\end{align*}

if $\beta$ is hyperbolic positive type two, by equations (\ref{le3})( \ref{le4})
\begin{align*}
&\mu_1(\beta^{p_1}) + \ldots + \mu_1(\beta^{p_s}) + \mu_{CZ}(\beta^{q_1}) + \ldots + \mu_{CZ}(\beta^{q_t}) \\
&\leq \sum_{i = 1}^{s}(p_i\mu_1(\beta) + \frac{p_i - 1}{2}) + \sum_{j = 1}^{t}(2q_j\mu_1(\beta) + q_j)\\
&= D\mu_1(\beta) + \frac{D}{2} - \frac{s}{2}
\end{align*}

if $\beta$ is hyperbolic positive type one, by equations (\ref{le3})( \ref{le4})
\begin{align*}
&\mu_1(\beta^{p_1}) + \ldots + \mu_1(\beta^{p_s}) + \mu_{CZ}(\beta^{q_1}) + \ldots + \mu_{CZ}(\beta^{q_t}) \\
&\geq \sum_{i = 1}^{s}(p_i\mu_1(\beta) - \frac{p_i - 1}{2}) + \sum_{j = 1}^{t}(2q_j\mu_1(\beta) - q_j)\\
&= D\mu_1(\beta) - \frac{D}{2} + \frac{s}{2}
\end{align*}

Let $\alpha$ be a pair of nonsymmetric periodic Reeb orbit, which is covered by $t$ pairs of nonsymmetric periodic orbits $(\alpha^{q_1}, \ldots, \alpha^{q_t})$ with total multiplicity $D = q_1 + \dots + q_t$, by the same reason as above, we have
\begin{equation*}
D\mu_{CZ}(\alpha) - (D-1) \leq \mu_{CZ}(\alpha^{q_1}) + \ldots + \mu_{CZ}(\alpha^{q_t}) \leq D\mu_{CZ}(\alpha) + (D-1)
\end{equation*}

Because each end of $\bar{u}$ is covered $D$ times, adding the above estimates at each puncture of $\bar{u}$ together, we get
\begin{equation}
\begin{split}
\text{ind}_R(u) & = -\frac{1}{2}\chi(u) + \sum_{i=1}^{k}\mu_1(\zeta_i) - \sum_{j=1}^{l}\mu_1(\eta_j) + \sum_{p=1}^{m}\mu_{CZ}(\xi_{q}) - \sum_{q=1}^{n}\mu_{CZ}(\chi_{q})\\
& \geq -\frac{1}{2}(D\chi(\bar{u}) - B) + (D\sum_{i=1}^{\bar{k}}\mu_1(\alpha_i) + D\sum_{p=1}^{\bar{m}}\mu_{CZ}(\gamma_{p}) - \frac{D\bar{k} - k}{2} - (D\bar{m} - m)- \#_1)\\
& - (D\sum_{j=1}^{\bar{l}}\mu_1(\beta_j) + D\sum_{q=1}^{\bar{n}}\mu_{CZ}(\delta_{q}) + \frac{D\bar{l} - l}{2} + (D\bar{n} - n) + \#_2)\\
& = -\frac{D}{2}(\chi(\bar{u}) + \sum_{i=1}^{\bar{k}}\mu_1(\alpha_i) - \sum_{j=1}^{\bar{l}}\mu_1(\beta_j) + \sum_{p=1}^{\bar{m}}\mu_{CZ}(\gamma_{q}) - \sum_{q=1}^{\bar{n}}\mu_{CZ}(\delta_{q})) + \frac{B}{2}\\
& + (- \frac{D\bar{k} - k}{2} - (D\bar{m} - m) - \frac{D\bar{l} - l}{2} - (D\bar{n} - n))-\#_1-\#_2\\
& = D\text{ind}_R(\bar{u})+ \frac{B}{2} + \frac{(B + 2 -2D)}{2} -\#_1-\#_2 \ (\text{By Riemann-Hurwitz})\\
& = D\text{ind}_R(\bar{u}) + (B+1-D) -\#_1-\#_2
\end{split}
\end{equation}

So we get the theorem, which is similar to \cite[Lemma 2.2]{HN}.
\begin{theo}\label{cover}
	Let $u$ be a Real pseudoholomorphic curve with genus $0$, which covers a somewhere injective Real pseudoholomorphic curve $\bar{u}$. Let $D$ denote the covering multiplicity and $B$ the total branch number of $u$ over $u'$. Then
	\begin{equation}
	\text{ind}_R(u) \geq D\text{ind}_R(\bar{u}) + (B+1-D) - \#_1 - \#_2
	\end{equation}
Where $\#_1$ is the number of hyperbolic positive type one pairs which cover a brake orbit at positive asymptotics; $\#_2$ is the number of hyperbolic positive type two pairs which cover a brake orbit at negative asymptotics.
\end{theo}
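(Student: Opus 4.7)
The plan is to apply the Zhou--Zhu virtual dimension formula recalled earlier to both $u$ and $\bar u$, and to compare the resulting expressions term by term, using Riemann--Hurwitz $\chi(u) = D\chi(\bar u) - B$ to handle the Euler-characteristic contribution. A Real branched cover sends each asymptotic end of $\bar u$ to a collection of asymptotic ends of $u$ whose multiplicities sum (appropriately) to $D$; moreover, a symmetric end of $\bar u$ may be lifted to both symmetric ends and pairs of nonsymmetric ends of $u$, whereas a pair end of $\bar u$ lifts only to pair ends of $u$.

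I would first extract two iteration-type bounds directly from the explicit formulas of Section 2: (a) $|\mu_1(\alpha^k) - k\mu_1(\alpha)| \leq \tfrac{k-1}{2}$ in every case, and (b) the sharp identities $\mu_{CZ}(\alpha^k) = 2k\mu_1(\alpha)$ for negative hyperbolic $\alpha$, $\mu_{CZ}(\alpha^k) = 2k\mu_1(\alpha) \mp k$ for positive hyperbolic type one and type two, and $|\mu_{CZ}(\alpha^k) - 2k\mu_1(\alpha)| \leq k-1$ in the elliptic case, together with the consequent generic bound $|\mu_{CZ}(\alpha^k) - k\mu_{CZ}(\alpha)| \leq k-1$. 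Writing the lifts of a fixed end of $\bar u$ as $s$ symmetric ends with multiplicities $p_1, \ldots, p_s$ and $t$ nonsymmetric pairs with multiplicities $q_1, \ldots, q_t$ subject to $\sum p_i + 2\sum q_j = D$, applying these bounds termwise and collapsing the sums produces the clean per-end estimate $\sum \mu_1(\beta^{p_i}) + \sum \mu_{CZ}(\beta^{q_j}) \in \bigl[D\mu_1(\beta) - \tfrac{D-s}{2} - t,\, D\mu_1(\beta) + \tfrac{D-s}{2} + t\bigr]$, which feeds into a lower bound at positive ends and an upper bound at negative ends.

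The main difficulty, and the source of the corrections $-\#_1 - \#_2$, is that the generic $\mu_{CZ}$ iteration bound is sharp only outside two specific cases. When $\beta$ is positive hyperbolic type one, $\mu_{CZ}(\beta^k) = 2k\mu_1(\beta) - k$ undershoots the generic lower bound $2k\mu_1(\beta) - (k-1)$ by exactly $1$; at a positive pair end of $u$ lying above such a brake orbit the desired lower bound is therefore weakened by $1$, accounting for $-\#_1$. Symmetrically, when $\beta$ is positive hyperbolic type two, $\mu_{CZ}(\beta^k) = 2k\mu_1(\beta) + k$ overshoots the generic upper bound by $1$, so each negative pair end of $u$ above such a brake orbit forces the upper bound up by $1$, accounting for $-\#_2$. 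The elliptic and negative hyperbolic cases respect the generic bounds and contribute no correction, which is what restricts the bookkeeping to these two families.

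Finally, I would assemble the per-end estimates and invoke Riemann--Hurwitz in the form $2 - k - l - 2m - 2n = D(2 - \bar k - \bar l - 2\bar m - 2\bar n) - B$. The remainder contributions $\pm\bigl(\tfrac{D-s}{2} + t\bigr)$ collected from the positive and negative ends of $\bar u$ combine with the $-\tfrac{1}{2}\chi(u) + \tfrac{D}{2}\chi(\bar u) = \tfrac{B}{2}$ contribution and telescope to exactly $B + 1 - D$; adding $D\,\text{ind}_R(\bar u)$ and subtracting the hyperbolic defects yields the asserted inequality $\text{ind}_R(u) \geq D\,\text{ind}_R(\bar u) + (B + 1 - D) - \#_1 - \#_2$.
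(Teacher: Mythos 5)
Your strategy is the paper's almost line by line: the Zhou--Zhu index formula applied to both $u$ and $\bar u$, Riemann--Hurwitz $\chi(u) = D\chi(\bar u) - B$ for the Euler characteristic, the iteration bound $|\mu_1(\alpha^k) - k\mu_1(\alpha)| \leq \tfrac{k-1}{2}$ together with the case-by-case $\mu_{CZ}$ identities, the identification of the positive hyperbolic type one undershoot at positive pair ends and type two overshoot at negative pair ends as the source of $-\#_1 - \#_2$, and a final telescope over ends of $\bar u$.

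There is one concrete gap, a sign error in the per-end estimate that breaks the telescope. You claim the interval $\bigl[D\mu_1(\beta) - \tfrac{D-s}{2} - t,\ D\mu_1(\beta) + \tfrac{D-s}{2} + t\bigr]$, but $t$ must enter with the \emph{opposite} sign to $\tfrac{D-s}{2}$. From $\mu_1(\beta^{p_i}) \geq p_i\mu_1(\beta) - \tfrac{p_i - 1}{2}$ and $\mu_{CZ}(\beta^{q_j}) \geq 2q_j\mu_1(\beta) - (q_j - 1)$ (the latter valid when $\beta$ is not positive hyperbolic type one), collapsing with $\sum p_i + 2\sum q_j = D$ gives
\begin{equation*}
\sum_i\mu_1(\beta^{p_i}) + \sum_j\mu_{CZ}(\beta^{q_j}) \;\geq\; D\mu_1(\beta) - \tfrac{1}{2}\bigl(\textstyle\sum p_i - s\bigr) - \bigl(\textstyle\sum q_j - t\bigr) \;=\; D\mu_1(\beta) - \tfrac{D-s}{2} + t,
\end{equation*}
and symmetrically $\leq D\mu_1(\beta) + \tfrac{D-s}{2} - t$ at negative ends (when $\beta$ is not positive hyperbolic type two). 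Your version, with $-t$ in the lower bound, is weaker by $2t$ per end; summing and using the Riemann--Hurwitz identity $\tfrac{D\bar k - k}{2} + \tfrac{D\bar l - l}{2} + (D\bar m - m) + (D\bar n - n) = D - 1 - \tfrac{B}{2}$, the telescope would then yield $B + 1 - D$ minus twice the number of pair ends of $u$ lying over symmetric ends of $\bar u$, strictly weaker than the stated theorem. Once that sign is corrected the argument closes exactly as in the paper.
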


\appendix
\section{application}\label{Appendix 1}
In this appendix we give possible applications of the above inequalities to the construction of Real cylindrical contact homology.

\begin{lemma}\label{lem1}
	Let $u$ be a genus zero Real pseudoholomorphic curve with one positive symmetric puncture, $l$ negative symmetric punctures and $n$ pairs of nonsymmetric negative punctures. Suppose that the somewhere injective Real curve $\bar{u}$ underlying $u$ is a nontrivial cylinder. Then
	\begin{equation*}
	\text{ind}_R u \geq l + 2n -\#_2
	\end{equation*}
\end{lemma}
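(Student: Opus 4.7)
\textbf{Proof proposal for Lemma \ref{lem1}.}

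The plan is to apply Theorem \ref{cover} directly, after determining the topology of $\bar u$ and exploiting the special structure of $u$'s punctures. First I would observe that a Real cylinder (genus $0$ with two ends, equipped with an anti-holomorphic involution that preserves the $\mathbb R$-coordinate on the target) must have both ends fixed setwise by the involution on $\Sigma'$, since the involution cannot interchange the positive and negative punctures. Hence $\bar u$ asymptotes to a brake orbit at each end, so in the notation of Section $4$ we have $\bar k=\bar l=1$ and $\bar m=\bar n=0$.

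Next I would compute the total branch number via Riemann--Hurwitz. Since $\chi(\bar u)=0$ and $\chi(u)=2-1-l-2n=1-l-2n$, the relation $\chi(u)=D\chi(\bar u)-B$ gives
\begin{equation*}
B=l+2n-1.
\end{equation*}
Also, because $u$ has a single positive puncture and that puncture is symmetric (not part of a nonsymmetric pair), the count $\#_1$ of hyperbolic positive type one pairs at positive asymptotics is automatically zero. Plugging these into Theorem \ref{cover},
\begin{equation*}
\mathrm{ind}_R(u)\;\geq\;D\,\mathrm{ind}_R(\bar u)+(B+1-D)-\#_2\;=\;D\,\mathrm{ind}_R(\bar u)+l+2n-D-\#_2.
\end{equation*}

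The final step is to use that a nontrivial somewhere injective Real cylinder satisfies $\mathrm{ind}_R(\bar u)\geq 1$. Indeed, for a generic admissible $J$ the moduli space of somewhere injective Real pseudoholomorphic curves is a manifold whose dimension equals the Real Fredholm index, and the $\mathbb R$-translation action in the symplectization is compatible with the Real structure and acts freely on non-$\mathbb R$-invariant curves; since a Real cylinder which is $\mathbb R$-invariant must be a trivial cylinder over a brake orbit, the hypothesis that $\bar u$ is nontrivial forces a positive-dimensional orbit of $\mathbb R$-translates inside the moduli space and hence $\mathrm{ind}_R(\bar u)\geq 1$. Substituting this bound,
\begin{equation*}
\mathrm{ind}_R(u)\;\geq\;D+l+2n-D-\#_2\;=\;l+2n-\#_2,
\end{equation*}
as required.

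The main obstacle is the transversality-type input $\mathrm{ind}_R(\bar u)\geq 1$: everything else is a direct computation once one believes the Real analogue of the standard fact that a nontrivial somewhere injective cylinder in a symplectization is not rigid. If this cannot be invoked as a black box from the earlier literature, one would need to record a brief automatic-transversality-plus-free-$\mathbb R$-action argument in the Real setting, where the symmetry $u(N\cdot)=Nu(\cdot)$ is compatible with translation in the $\mathbb R$-factor (which is fixed by the extended involution), so that the Real moduli space inherits the free $\mathbb R$-action away from trivial Real cylinders.
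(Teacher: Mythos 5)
Your proposal is correct and follows essentially the same route as the paper: compute $B=l+2n-1$ from Riemann--Hurwitz with $\chi(\bar u)=0$, invoke $\mathrm{ind}_R(\bar u)\geq 1$ for a nontrivial somewhere injective Real cylinder, and substitute into Theorem \ref{cover}. The only difference is that you make explicit two steps the paper leaves implicit — that $\#_1=0$ because the unique positive puncture of $u$ is symmetric and hence cannot form a nonsymmetric pair, and the transversality/$\mathbb{R}$-translation justification of $\mathrm{ind}_R(\bar u)\geq 1$ — which are both sound.
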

\begin{proof}
	By Riemman-Hurwitz, $1-l-2n = -B$. Since $\bar{u}$ is nontrivial, $ind_R(\bar{u}) \geq 1$. Then by equation (\ref{cover})
	\begin{equation*}
	\text{ind}_R(u) \geq D + (B+1-D) -\#_2 = 1+ B -\#_2 \geq l + 2n -\#_2
	\end{equation*}
\end{proof}

\begin{lemma}\label{lem2}
	Let $u$ be a genus zero Real pseudoholomorphic curve with one positive symmetric puncture, $l > 1$ negative symmetric punctures and $n$ pairs of nonsymmetric negative punctures. Suppose that $u$ is not a multiple cover of a cylinder. Then
	\begin{equation*}
	\text{ind}_R(u) \geq 1 -\#_2
	\end{equation*}
\end{lemma}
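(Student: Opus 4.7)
The plan is to reduce to the somewhere injective Real curve $\bar u$ underlying $u$ and combine a transversality-based lower bound on $\text{ind}_R(\bar u)$ with Theorem \ref{cover}. First I would pin down the structure of $\bar u$. Since the unique positive puncture of $u$ is symmetric, and the covering conventions used in the proof of Theorem \ref{cover} force symmetric positive ends of $\bar u$ to be covered either by symmetric positive ends of $u$ or by pairs of nonsymmetric positive ends of $u$, while nonsymmetric positive pairs of $\bar u$ can only come from nonsymmetric positive pairs of $u$, it follows that $\bar u$ has exactly one positive puncture, and that puncture is symmetric. In particular $u$ has no nonsymmetric positive pair of ends, so $\#_1 = 0$.

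Next I would establish $\text{ind}_R(\bar u) \geq 1$. By hypothesis $u$ is not a multiple cover of a cylinder, so $\bar u$ is not a cylinder. For a generic Real-invariant almost complex structure $J$, the moduli space of somewhere injective Real pseudoholomorphic curves near $\bar u$ is cut out transversally and has local dimension $\text{ind}_R(\bar u)$. Since $\bar u$ is not a cylinder, it is not invariant under the ${\bf R}$-translation in the symplectization, so this ${\bf R}$-action is free on the moduli component containing $\bar u$; the quotient therefore has nonnegative dimension, forcing $\text{ind}_R(\bar u) \geq 1$. To conclude, if $D = 1$ then $u = \bar u$ is somewhere injective, $\#_2 = 0$, and $\text{ind}_R(u) = \text{ind}_R(\bar u) \geq 1 = 1 - \#_2$. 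If instead $D \geq 2$, then Theorem \ref{cover} combined with $\#_1 = 0$, $\text{ind}_R(\bar u) \geq 1$, and $B \geq 0$ gives
\begin{equation*}
\text{ind}_R(u) \geq D \cdot \text{ind}_R(\bar u) + (B + 1 - D) - \#_1 - \#_2 \geq D + (B + 1 - D) - \#_2 = B + 1 - \#_2 \geq 1 - \#_2.
\end{equation*}

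The main obstacle is the transversality lower bound $\text{ind}_R(\bar u) \geq 1$ used in the second step: one must verify that somewhere injective Real pseudoholomorphic curves are regular for generic Real-invariant $J$, and that the symplectization ${\bf R}$-action remains free on every non--${\bf R}$-invariant Real curve. Both facts are standard in the non-Real setting, but in the Real setting they require checking that the imposed Real symmetry (via $N$ on the source and target) does not produce extra cokernel contributions to the linearised Cauchy-Riemann operator and that invariance under ${\bf R}$-translation still forces the curve to be a union of trivial cylinders.
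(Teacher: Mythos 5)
Your proof is correct and follows essentially the same route as the paper: both reduce to the underlying somewhere injective curve $\bar u$ and apply Theorem \ref{cover} together with $\text{ind}_R(\bar u) \geq 1$ and $B \geq 0$ to obtain $\text{ind}_R(u) \geq 1 + B - \#_2 \geq 1 - \#_2$. You merely make explicit two points the paper leaves implicit, namely that $\#_1 = 0$ (since the only positive puncture of $u$ is symmetric) and the transversality justification for $\text{ind}_R(\bar u) \geq 1$.
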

\begin{proof}

	By theorem \ref{cover} $\text{ind}_R(u) \geq D\text{ind}_R(\bar{u}) + (B+1-D) - \#_2$ and $ind_R(\bar{u}) \geq 1$,
	\begin{equation*}
	\text{ind}_R(u) \geq D + (B+1-D) -\#_2 = 1+ B -\#_2 \geq 1 -\#_2
	\end{equation*}

\end{proof}

\begin{lemma}\label{lem3}
	Let $u$ be a Real pseudoholomorphic cylinder which covers $\bar{u}$, a somewhere injective cylinder. The covering multiplicity is $D$. Then
	\begin{equation*}
	1 \leq \text{ind}_R(\bar{u}) \leq \text{ind}_R(u)
	\end{equation*}
\end{lemma}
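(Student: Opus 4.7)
The plan is first to exploit the topology of cylinders: both $u$ and $\bar u$ have Euler characteristic zero, so Riemann--Hurwitz $\chi(u) = D\chi(\bar u) - B$ immediately forces $B = 0$, i.e.\ the cover is unbranched.

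Next I would analyze the end structure. A cylinder has exactly two punctures, one positive and one negative, whereas a pair of non-symmetric ends requires two punctures of the same sign. Consequently neither end of $u$ can belong to such a pair; both must be symmetric punctures converging to brake orbits. This forces $k = l = 1$ and $m = n = 0$, and in particular $\#_1 = \#_2 = 0$. The same structural reasoning applies to $\bar u$, so $\bar k = \bar l = 1$ and $\bar m = \bar n = 0$ as well.

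With these values Theorem \ref{cover} specializes to
\begin{equation*}
\text{ind}_R(u) \geq D\,\text{ind}_R(\bar u) + (B + 1 - D) - \#_1 - \#_2 = D\,\text{ind}_R(\bar u) + 1 - D,
\end{equation*}
which rearranges as $\text{ind}_R(u) - \text{ind}_R(\bar u) \geq (D-1)\bigl(\text{ind}_R(\bar u) - 1\bigr)$. Once the lower bound $\text{ind}_R(\bar u) \geq 1$ is in hand, the desired inequality $\text{ind}_R(\bar u) \leq \text{ind}_R(u)$ drops out for every $D \geq 1$.

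For the lower bound itself I would argue exactly as implicit in Lemma \ref{lem1}: $\bar u$ is somewhere injective and tacitly not a trivial cylinder, so for a generic $J$ the moduli space is cut out transversely at $\bar u$, while the free $\mathbb{R}$-action by $s$-translation on any non-trivial curve in the symplectization produces a one-parameter family. Hence $\text{ind}_R(\bar u) - 1 \geq 0$. The only delicate point in the whole argument is this implicit nontriviality of $\bar u$: if $\bar u = \mathbb{R} \times \beta$ were a trivial Real cylinder one would only get $\text{ind}_R(\bar u) = 0$ and would have to fall back on the weaker Theorem \ref{RECHLE}. This nontriviality must be read into the hypothesis, consistently with how Lemma \ref{lem1} and Lemma \ref{lem2} are used.
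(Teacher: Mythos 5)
Your proof is correct and essentially mirrors the paper's, just routed through Theorem~\ref{cover} rather than re-deriving the key bound by hand. The paper fixes a trivialization with $c_1(\bar u)=0$, writes $\text{ind}_R(\bar u)=\mu_1(\alpha)-\mu_1(\beta)$ and $\text{ind}_R(u)=\mu_1(\alpha^D)-\mu_1(\beta^D)$, and then applies the iteration estimate of Lemma~\ref{le3} (equation~(\ref{1})) directly to get $\text{ind}_R(u)\geq D\,\text{ind}_R(\bar u)-(D-1)$; you reach the exact same inequality by observing $\chi(u)=\chi(\bar u)=0$ forces $B=0$, that a Real cylinder has one symmetric end of each sign so $\#_1=\#_2=0$, and then invoking Theorem~\ref{cover}, whose proof rests on the same Lemma~\ref{le3}. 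From that point both arguments finish identically via $\text{ind}_R(\bar u)\geq 1$, which both you and the paper rightly note requires $\bar u$ to be a nontrivial somewhere injective cylinder — a hypothesis the paper leaves implicit exactly as you flagged. Your observation that the involution cannot swap the two ends of the cylinder (since $N$ fixes the $\mathbb{R}$-coordinate in the symplectization) is a worthwhile point the paper does not spell out, and it is what legitimizes setting $m=n=0$.
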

\begin{proof}
	Let $\alpha$ and $\beta$ be the positive and negative brake orbits. Choose a trivialization so that $c_1(\bar{u}) = 0$, then
	\begin{equation*}
	\text{ind}_R(\bar{u}) = \mu_1(\alpha) - \mu_1(\beta)
	\end{equation*}
	and
	\begin{equation*}
	\text{ind}_R(u) = \mu_1(\alpha^D) - \mu_1(\beta^D)
	\end{equation*}
	Since $\bar{u}$ is not a trivial cylinder, ind$\bar{u} \geq 1$. By equation (\ref{1}), we have
	\begin{equation*}
	\mu_1(\alpha^D) \geq D\mu_1(\alpha) - \frac{D -1}{2},\quad \mu_1(\beta) \leq D\mu_1(\beta^D) + \frac{D -1}{2}
	\end{equation*}
	So we get
\begin{align*}
  \text{ind}_R(u) &\geq D(\mu_1(\alpha) - \mu_1(\beta)) - (D - 1) \\
   & = \mu_1(\alpha) - \mu_1(\beta) + (D - 1)(\mu_1(\alpha) - \mu_1(\beta) - 1)\\
   & \geq \mu_1(\alpha) - \mu_1(\beta) = \text{ind}_R(\bar{u})
\end{align*}
\end{proof}

When we consider the compactness of a series of Real pseudoholomorphic curves, the limit will be a Real pseudoholomorphic building $u = (u_1, \ldots, u_k)$. Each component is a Real pseudoholomorphic curve $Nu_i(\cdot) = u_i(N\cdot)$, not necessary connected. The negative ends of $u_i$ are the same as the positive ends of $u_{i + 1}$. And the genus of $u$ is the genus of the Real pseudoholomorphic curve which is obtained by gluing together all components. We define the Fredholm index for a Real pseudoholomorphic building to be $ind_R(u) = \sum_{i = 1}^{k}ind_R(u_i)$. See reference \cite{Compact}.

A brake orbit $\beta$ on a convex hypersurface in ${\bf R}^4$ has the property $\mu_1(\beta) \geq \frac{3}{2}$, see \cite[Proposition 3.8]{Urs}. We call a contact form with anticontact involution dynamically convex, if any brake orbit $\beta$ on it satisfies the condition $\mu_1(\beta) \geq \frac{3}{2}$ and any periodic Reeb orbit $\gamma$ satisfies the condition $\mu_{CZ}(\gamma) \geq 3$.

\begin{theo}
Assume the contact form with anticontact involution $(\lambda, N)$ is dynamically convex and the almost complex structure $J$, which satisfies $JN = -NJ$, is generic. Suppose the Real pseudoholomorphic building $u = (u_1, \ldots, u_k)$ is a genus $0$ building with one symmetric positive puncture and no negative puncture. Then $ind_R(u) \geq 1$, and if $ind_R(u) = 1$, then $k=1$ and $u$ is a plane.
\end{theo}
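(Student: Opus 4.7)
The plan is to reduce the total Fredholm index of the building to a closed expression in $\mu_1(\alpha)$ by additivity across the levels, apply dynamical convexity for the lower bound, and then analyze the equality case by a per-component count.

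First I would write $\text{ind}_R(u) = \sum_{i=1}^{k} \text{ind}_R(u_i)$ and expand each summand using the Real virtual-dimension formula of \cite{ZZ}. Each internal puncture matches a negative end of $u_i$ to a positive end of $u_{i+1}$ at the same orbit $\beta$, so the $\mu_1(\beta)$ (respectively $\mu_{CZ}$ for a nonsymmetric pair) contributions cancel pairwise and only the overall $+\mu_1(\alpha)$ survives. The maximum principle on the $\mathbb{R}$-coordinate forces every connected component of every level to carry a positive end, and together with the unique overall positive puncture this makes the glued surface connected. Being of genus $0$ with a single puncture, $\sum_i \chi(u_i) = \chi_{\text{glued}} = 1$, and the relative Chern numbers sum to $c_1(u)$. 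Hence
\begin{equation*}
\text{ind}_R(u) = -\tfrac{1}{2} + c_1(u) + \mu_1(\alpha).
\end{equation*}
In a global trivialization of the (assumed trivial) contact structure $\xi$ --- the trivialization in which the dynamical convexity inequality $\mu_1(\alpha) \geq 3/2$ is stated --- one has $c_1(u) = 0$, so $\text{ind}_R(u) \geq 1$.

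For the equality case $\text{ind}_R(u) = 1$, the plan is to argue component by component that each level contributes at least $1$, so $k$ must equal $1$. For generic $J$, every non-trivial somewhere injective Real curve has $\text{ind}_R \geq 1$, because the $\mathbb{R}$-translation action gives its moduli space a one-dimensional orbit. A branched cover of a trivial Real cylinder contributes $\text{ind}_R \geq 0$ by Theorem \ref{RECHLE}, while a branched cover of a non-trivial somewhere injective $\bar u$ satisfies $\text{ind}_R \geq B + 1 - \#_1 - \#_2$ by Theorem \ref{cover} combined with $\text{ind}_R(\bar u) \geq 1$. Since the overall positive end is the single symmetric brake orbit $\alpha$, the top component has no nonsymmetric positive pair and hence $\#_1 = 0$ there. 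Invoking the SFT stability convention that every level contains at least one component which is not a trivial cylinder, each level then contributes $\geq 1$ to $\text{ind}_R(u)$, so $\text{ind}_R(u) \geq k$ forces $k = 1$. The one surviving component carries one symmetric positive puncture at $\alpha$, no negative puncture, and genus $0$; since its positive ends total one, no extra trivial-cylinder components can sit alongside it, and $u$ is a Real plane.

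The step I expect to be the main obstacle is the equality analysis, specifically ensuring that branched covers of trivial cylinders with $\text{ind}_R = 0$ cannot slip into the building between non-trivial levels without being detected. Handling this requires the restrictive partition and equality conditions extracted from the proof of Theorem \ref{RECHLE}, together with careful tracking of the $\#_2$ contributions via Theorem \ref{cover} and the dynamical convexity hypothesis.
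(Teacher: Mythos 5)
Your telescoping argument for the lower bound is correct and in fact more direct than the paper's route, which reaches $\mathrm{ind}_R(u) \geq 1$ only through an induction on the number of levels combined with \cite[Prop.\ 2.7]{HN}. Under the paper's standing convention that the relative Chern term vanishes (the same convention used silently when the paper writes $\mathrm{ind}_R(u) = -\tfrac12 + \mu_1(\beta) \geq 1$ for a plane), the cancellation of $\mu_1$ and $\mu_{CZ}$ across matched punctures and the identity $\sum_i \chi(u_i) = \chi_{\mathrm{glued}} = 1$ collapse the whole building to $\mathrm{ind}_R(u) = -\tfrac12 + \mu_1(\alpha) \geq 1$. That part holds up and is a genuine simplification over the published proof.

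The equality case does not go through as written, and the obstruction sits exactly where you flag it. The claim that each level contributes at least $1$ fails: a connected component that is a nontrivial Real branched cover of a trivial Real cylinder can satisfy $\mathrm{ind}_R = 0$ --- Theorem \ref{RECHLE} gives only $\mathrm{ind}_R \geq 0$, and equality is actually attained (see the discussion following the proof, e.g.\ $k = l = 1$ or $k = k' = 2$, $l' = 0$ in the negative hyperbolic type one case). The SFT stability convention eliminates levels consisting \emph{entirely} of trivial cylinders, but it does not eliminate a level whose only non-trivial-cylinder component is an index-zero branched cover of a trivial cylinder; such a level contributes $0$, and then $\mathrm{ind}_R(u) \geq k$ is simply false. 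The paper deals with this differently: it does not bound each level separately but instead pushes the surplus \emph{downwards}. Concretely, when $k > 1$ and $u_1$ has $l$ symmetric and $n$ nonsymmetric pairs of negative ends with $l + 2n \geq 2$, the sub-buildings hanging below $u_1$ contribute at least $l + 2n$ to the index (each symmetric negative end caps a sub-building of index $\geq 1$ by the induction hypothesis, each nonsymmetric pair caps one of index $\geq 2$ by \cite[Prop.\ 2.7]{HN}), and this surplus absorbs whatever index $u_1$ may lose, case-split via Theorem \ref{RECHLE} and Lemmas \ref{lem1}--\ref{lem2} with care about $\#_2 \leq n$; the result is $\mathrm{ind}_R(u) \geq 2$ whenever $k>1$. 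To repair your proof you would need this ``compensation from below'' idea or an independent argument that rules out index-zero covers of trivial cylinders from the interior of the building, rather than an appeal to stability alone.
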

\begin{proof}
If $k = 1$, then $u$ is a plane and by \cite[Main Theorem]{ZZ} $ind_R(u) = -\frac{1}{2} + \mu_1(\beta) \geq 1$ by the dynamically convex condition.

If $k > 1$, suppose the theorem is true for $k - 1$.

If $u_1$ has only one negative puncture, then $u_1$ must be a nontrivial cylinder, by induction we have that the building $u' = (u_2, \ldots, u_k)$ has index $ind_R(u') \geq 1$. So we get $ind_R(u) \geq ind_R(u_1) + 1 \geq 2$. The theorem holds.

 Suppose $u_1$ has at least two negative punctures. We assume $u_1$ has one positive, $l$ negative symmetric punctures and $n$ pairs of negative punctures . We have $l \geq 2$ or $n \geq 1$.

In \cite[Proposition 2.7]{HN}, the authors proved under the dynamically convex condition that a pseudoholomorphic building with one positive and no negative puncture has index bigger or equals to $2$, and the equality holds only if the building has $1$ level. In our case, the $n$ pairs of nonsymmetric negative punctures in $u_1$ will contribute $2n$ to the index. Hence by induction we have $ind_R(u) \geq ind_R(u_1) + l + 2n$.

Let $\bar{u}_1$ be the somewhere injective Real pseudoholomorphic curve underlying $u_1$. If $\bar{u}_1$ is a trivial cylinder, then by equation (\ref{RECHLE}) $ind_R(u_1) \geq 0$, and therefore $ind_R(u) \geq l +2n \geq 2$; If $\bar{u}_1$ is a nontrivial cylinder, then by equation (\ref{lem1}) $ind_R(u_1) \geq l + 2n - \#_2 \geq l + n$, because of $n  \geq \#_2$, and $ind_R(u) \geq l + n + l +2n \geq 2$; If $\bar{u}_1$ is not a trivial cylinder, then by equation (\ref{lem2}) $ind_R(u_1) \geq 1 - \#_2$, and because of $n  \geq \#_2$, $ind_R(u) \geq 1 - \#_2 + l +2n \geq 1+ l + n \geq 2$.
\end{proof}

We can rule out some bad breakings for the construction of Real cylindrical contact homology as well.

Assume $(\lambda, N)$ is dynamically convex , let $\{u_n\}$ be a sequence of Real pseudoholomorphic cylinders with Fredholm index for the Real pseudoholomorphic curve $1$, then the only nontrivial limit of $\{u_n\}$ will be the building $u = (u_1, u_2)$, where $u_1$ is a Real pseudoholmorphic curve with one positive symmetric puncture and two negative symmetric punctures, and $ind_R(u_1) = 0$, which is a $d+1$-multiple cover of a trivial cylinder ${\bf R} \times \beta$, $\beta$ is a brake orbit, the positive puncture of $u_1$ converges to $\beta^{d+1}$, one negative puncture converges to $\beta^d$ and the other negative puncture $\beta$; $u_2$ consist of a trivial cylinder ${\bf R} \times \beta^d$ and a plane with one positive puncture, which converges to the brake orbit $\beta$. There exists no pair of nonsymmetric punctures, because of dynamically convex condition, $\mu_{CZ}(\gamma) \geq 3$.

\begin{theo}
Let the building $u = (u_1, u_2)$ be as follows: $u_1$ is a Real pseudoholmorphic curve with one positive symmetric puncture and two negative symmetric punctures, and $\textrm{ind}_\textrm{R}(u_1) = 0$, which is a $d+1$-multiple Real cover of a trivial Real cylinder ${\bf R} \times \beta$, $\beta$ is a brake orbit, the positive symmetric puncture of $u_1$ converges to $\beta^{d+1}$ one negative symmetric puncture converges to $\beta^d$ and the other negative symmetric puncture converges to $\beta$; $u_2$ consists of a trivial Real cylinder ${\bf R} \times \beta^d$ and a Real plane with one symmetric positive puncture, which converges to the brake orbit $\beta$, then a sequence of Real pseudoholomorphic cylinders $\{u_n\}$ in ${\mathcal M}^1_R(\beta^{d+1}, \beta^d)$ will not converge to $u= (u_1, u_2)$.
\end{theo}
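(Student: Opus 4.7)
My plan is to derive a contradiction from $u_n \to u$ by combining Fredholm index additivity, Theorem~\ref{Par}, and a case analysis on the type of the brake orbit $\beta$.

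First, I would apply additivity of the Fredholm index across the levels of the building:
\[
1 = \text{ind}_R(u_n) = \text{ind}_R(u_1) + \text{ind}_R({\bf R}\times\beta^d) + \text{ind}_R(\text{plane}) = 0 + 0 + \text{ind}_R(\text{plane}),
\]
so the plane has $\text{ind}_R = 1$. Applying the Real Fredholm formula to the plane (with $\chi = 1$, one symmetric positive puncture at $\beta$, and $c_1 = 0$) gives $\mu_1(\beta) = \tfrac{3}{2}$, and likewise $\text{ind}_R(u_n) = \mu_1(\beta^{d+1}) - \mu_1(\beta^d) = 1$ in the same trivialization.

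Second, I would localize the analysis at the breaking level where $u_1$ and $u_2$ meet. On the $u_1$ side the two negative ends cover the simple orbit $\beta$ with multiplicities $d$ and $1$, yielding the partition $(d, 1)$ of the total multiplicity $d+1$; on the $u_2$ side the positive ends of the trivial cylinder (multiplicity $d$) and the plane (multiplicity $1$) yield the same partition $(d, 1)$. Paralleling Hutchings' argument in \cite[\S 6]{Hutchings} and sharpening our Lemma~\ref{wind}, for the sequence of index-$1$ Real cylinders to converge geometrically to this building the partition $(d, 1)$ at $\beta$ must simultaneously be the Real ECH \emph{negative} partition (from $u_1$'s side) and the Real ECH \emph{positive} partition (from $u_2$'s side) at $(\beta, d+1)$ in the sense of Theorem~\ref{Par}.

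The bulk of the proof is the case analysis on the type of $\beta$. For hyperbolic $\beta$, the iteration formulae with $\mu_1(\beta) = \tfrac{3}{2}$ force $\mu_1(\beta^{d+1}) - \mu_1(\beta^d) = 1$ only in negative hyperbolic type one with $d$ even, negative hyperbolic type two with $d$ odd, and positive hyperbolic type one (for any $d$); in each admissible case a direct comparison with the ECH negative partitions listed in Theorem~\ref{Par} shows $(d, 1)$ fails to be the ECH negative partition at $(\beta, d+1)$ (for example, negative hyperbolic type one with $d$ even has ECH negative partition $(d+1)$ because $d+1$ is odd, which disagrees with $(d, 1)$ for every $d \geq 1$). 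For elliptic $\beta$ the rotation number satisfies $\theta \in (1, 2)$ and is irrational by nondegeneracy; the condition for $(d, 1)$ to be the ECH negative partition at $(\beta, d+1)$ translates, by the elliptic calculation in the proof of Theorem~\ref{RECHLE}, to
\[
\lfloor (d+1)\theta \rfloor = \lfloor d\theta \rfloor + \lfloor \theta \rfloor,
\]
while the ECH positive condition reads $\lceil (d+1)\theta \rceil = \lceil d\theta \rceil + \lceil \theta \rceil$; since $\lceil x \rceil = \lfloor x \rfloor + 1$ for irrational $x$, the two conditions differ by $1$ and cannot both hold.

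The step I expect to be the main obstacle is rigorously establishing the partition compatibility at the breaking level as a \emph{necessary} condition for the geometric convergence $u_n \to u$, not merely a sufficient one for the upper bound in Theorem~\ref{Par}. This is the Real analog of Hutchings' gluing/partition lemma in \cite[\S 7]{Hutchings} and requires adapting the asymptotic winding analysis of Lemmas~\ref{le1},~\ref{le2} together with the writhe bound of Lemma~\ref{wind} to control the braid linking between $u_1$ and the plane of $u_2$ near $\beta$. Once this compatibility is in place, the hyperbolic and elliptic case analyses above produce the desired contradiction in all cases.
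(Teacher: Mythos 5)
Your proposal diverges fundamentally from the paper's argument, and the divergence creates a gap you correctly sense but underestimate. The paper rules out the breaking by applying the adjunction formula directly to $u_1$ alone: it decomposes the writhe of the two negative braids of $u_1$ as $\omega(\zeta^-) = \omega(\zeta_1) + 2d\,\mathrm{wind}(\zeta_2) + \omega(\zeta_2)$, applies the writhe lower bound from Lemma~\ref{wind} to $\zeta_1$, the winding lower bound from Lemma~\ref{le2} to $\zeta_2$, and the writhe upper bound to $\zeta^+$, inserts all of this into $\chi(u_1) + \omega(\zeta^+) - \omega(\zeta^-) = 2\Delta(u_1) \geq 0$, and combines the result with the iteration-formula upper bound $\mu_1(\beta^{d+1}) \leq (d+1)\mu_1(\beta) + \tfrac{d}{2}$ from Lemma~\ref{le3} to land at $(d-1)\mu_1(\beta) \leq -\tfrac{d}{2} - \tfrac{3}{2}$, which contradicts dynamical convexity. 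No partition result is invoked, and no information about $u_2$ is used beyond its role in separating $\zeta_1$ from $\zeta_2$ to justify the linking-number decomposition.

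Your proposal instead tries to route the argument through Theorem~\ref{Par}, asserting that the partition $(d,1)$ at the breaking orbit set $(\beta, d+1)$ must simultaneously be the Real ECH negative partition (coming from $u_1$) and the Real ECH positive partition (coming from $u_2$). This is not a consequence of Theorem~\ref{Par}, and I do not expect it to hold as stated: that theorem characterizes when the Fredholm index saturates the Real ECH index, which is a statement about a \emph{single} curve with generic asymptotics, not a constraint forced on an intermediate level in a broken building. An index-$0$ branched cover of a trivial cylinder such as $u_1$ is exactly the kind of object that is \emph{not} ECH-partition-minimal in general; its ends are constrained by the branched-cover writhe analysis, not by Theorem~\ref{Par}. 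To establish your partition-compatibility claim one would have to redo the writhe/adjunction estimate from scratch, which is precisely what the paper does directly and with less overhead. Your preliminary observation that the plane has $\mathrm{ind}_R = 1$, hence $\mu_1(\beta) = \tfrac{3}{2}$, is a nice sharpening (the paper only uses $\mu_1(\beta) \geq \tfrac{3}{2}$), but it does not repair the gap: the missing step is not another index identity but the geometric input that the adjunction inequality supplies.

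There is also a secondary issue with the case analysis even granting the compatibility claim: for elliptic $\beta$ the Real ECH negative partition at $(\beta, d+1)$ is the full ECH elliptic partition, which is not characterized by a single floor identity $\lfloor (d+1)\theta\rfloor = \lfloor d\theta\rfloor + \lfloor\theta\rfloor$; demonstrating incompatibility would require the combinatorics of the elliptic partition in \cite[\S 4]{Hutchings}, and this part of the argument is not worked out. By contrast, the adjunction approach of the paper treats elliptic and hyperbolic cases uniformly through the single inequality $\mu_1(\beta^{d+1}) \geq d(2\mu_1(\beta)+1) + \tfrac{3}{2}$, with the type of $\beta$ playing no role thereafter.
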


\begin{proof}
Firstly because of $ind_R(u) = 1$, we have $\mu_1(\beta^{d+1}) - \mu_1(\beta^d) = 1$

Let $\zeta^+$ denote the braid corresponding to the positive end of $u_1$ at $\beta^{d + 1}$,  $\zeta^-$ denote the braid corresponding to the negative end of $u_1$, $\zeta$ consists of two components $\zeta = \zeta_1 \bigcup \zeta_2$, $\zeta_1$ at $\beta^d$ and $\zeta$ at $\beta$.

From our discussion in section 3, we have that the writhe satisfies the inequality $\omega(\zeta^+)$, $\omega(\zeta^+) \leq d(\mu_1(\beta^{d+1})-\frac{1}{2})$. Because we can choose $\zeta_1$ within distance $\varepsilon$ of $\beta$ and $\zeta_2$ has distance at least $2\varepsilon$ from $\beta$, then $\omega(\zeta^-) = \omega(\zeta_1) + 2d \textrm{wind}(\zeta_2) + \omega(\zeta_2)$. Note that $\omega(\zeta_2) = 0$, because $\xi_2$ has degree $1$. Moreover by lemma \ref{wind} $\textrm{wind}(\zeta_2) \geq \mu_1(\beta) + \frac{1}{2}$, and $\omega(\zeta_1) \geq (d-1)(\mu_1(\beta^d)+\frac{1}{2})$.

By the adjunction formula of \cite[Lemma 3.5]{HN}, we have $\chi(u_1) + \omega(\zeta^+) - \omega(\zeta^-) = 2\Delta(u_1) \geq 0$, where $\Delta(u_1)$ is the singularity number.

Since $u_1$ is a pair of pants, we have $\chi (u_1) = -1$. Inserting the estimate of the writhe into the adjunction formula, we get
\begin{equation*}
  -1 + d(\mu_1(\beta^{d+1})-\frac{1}{2}) - ((d-1)(\mu_1(\beta^d) + \frac{1}{2}) + 2d(\mu_1(\beta) + \frac{1}{2})) \geq 0
\end{equation*}
Using the relation $\mu_1(\beta^{d+1}) - \mu_1(\beta^d) = 1$, we have
\begin{align*}
0 \leq & -1 + d(\mu_1(\beta^{d+1})-\frac{1}{2}) - \left((d-1)(\mu_1(\beta^d) + \frac{1}{2}) + 2d(\mu_1(\beta) + \frac{1}{2})\right) \\
  = & -1 + \mu_1(\beta^{d+1})-\frac{1}{2} + (d-1)(\mu_1(\beta^{d+1}) - \mu_1(\beta^d) -1) - 2d(\mu_1(\beta) +\frac{1}{2})\\
  = & -1 + \mu_1(\beta^{d+1})-\frac{1}{2} -2d(\mu_1(\beta) + \frac{1}{2})
\end{align*}
Finally, we get $\mu_1(\beta^{d+1}) \geq d(2\mu_1(\beta) + 1) + \frac{3}{2}$.

We have estimate $(d+1)\mu_1(\beta) + \frac{d}{2} \geq \mu_1(\beta^{d+1})$ equation (\ref{1}) in \S 4
\begin{align*}
  (d+1)\mu_1(\beta) + \frac{d}{2} & \geq \mu_1(\beta^{d+1}) \geq  d(2\mu_1(\beta) + 1) + \frac{3}{2}\\
  -\frac{d}{2} - \frac{3}{2}& \geq (d-1)\mu_1(\beta) \geq 0\\
\end{align*}
For the last inequality we have used the assumption of dynamical convexity condition. Because $d \geq 1$, we have a contradiction.
\end{proof}

In order to construct Real cylindrical contact homology for dynamical convex contact forms, we have to rule out all break cases of a sequence of Real cylinders with index $2$ except for breaking into two index $1$ cylinders. Almost all bad cases contain a index $0$ Real pseudoholomorphic curve, which is Real multiple cover of a trivial Real cylinder. Those cases can be ruled out by using the adjunction formula, interested readers can verify it by themselves.

There is just one bad breaking which can not be ruled out by using the adjunction formula. Let $\{u_n\}$ be a sequence of Real pseudoholomorphic cylinders in ${\mathcal M}^2_R(\alpha, \beta)$, where $\alpha, \beta$ are brake orbits. The limit breaks into a building $u=(u_1, u_2)$, where $u_1$ is a index $1$ pair of pants with one positive symmetric puncture which converges to a brake orbit $\alpha$, two negative symmetric punctures, which converge to brake orbits $\beta, \gamma$. $\gamma$ is a brake orbit with $\mu_1(\gamma) = \frac{3}{2}$. $u_2$ consists of a trivial cylinder $v_1 = {\bf R} \times \beta$ and a plane $v_2$ with one positive puncture, which converges to $\gamma$. We still have the adjunction formula for $u_1$, but $\alpha, \beta, \gamma$ have no connection, therefore we can not get a contradiction. How to take care of this bad case, we leave as a task for future researches.

\section{GIT description}\label{Appendix 2}

Here we give the GIT description of all cases by $\Psi_\beta(\tau)$.

Let $\beta$ be a brake orbit, and $\Psi_\beta(t)$ the corresponding symplectic path. We denote

$$\Psi_\beta(\tau) = \begin{pmatrix}
a & b\\
c & d
\end{pmatrix},
\Psi_\beta(\frac{\tau}{2}) = \begin{pmatrix}
u & v\\
w & x
\end{pmatrix}$$

Because $\Psi_\beta(\tau) = N\Psi_\beta(\frac{\tau}{2})^{-1}N\Psi_{\beta}(\frac{\tau}{2})$ and $xu - vw = 1$
\begin{equation*}
\begin{pmatrix}
a & b\\
c & d
\end{pmatrix}=
\begin{pmatrix}
1 + 2vw & 2vx\\
2uw & 1 + 2vw
\end{pmatrix}
\end{equation*}
we know $a = d$.

The nondegeneracy condition means that trace$(\Psi_{\beta}(\tau)) \neq 2$ which is equivalent to $vw \neq 0 \Longleftrightarrow v \neq 0, w \neq 0$.

Remark: in \cite{LZ}, the authors defined the nullities $\nu_1(\beta) = \nu^{RS}(L_0, \Psi([0, \frac{\tau}{2}])L_0)$, $\nu_2(\beta) = \nu^{RS}(L_1, \Psi([0, \frac{\tau}{2}])L_1)$. And the nullity for the Conley-Zehnder Index is $\nu_{CZ}(\alpha) = \nu^{RS}(W, Gr(\Psi([0, \tau])))$, where $W = \{ (x, x) \in {{\bf{R}}^{4n}}| x \in {\bf R}^{2n} \}$. There is the relation $\nu_{CZ} = \nu_1 + \nu_2$. We can see from the matrix $\Psi_{\beta}(\frac{\tau}{2})$, $\nu_1 = 0 \Longleftrightarrow v \neq 0$ and $\nu_2 = 0 \Longleftrightarrow w \neq 0$.

The elliptic case is characterized by the condition $-1 < \text{trace}(\Psi_{\beta}(\tau)) < 1$ which is equivalent to $-1 < a < 1$ or equivalently $-1 < vw < 0$.

$\beta$ is hyperbolic negative, if $a < -1$ or equivalently $vw < -1$; $\beta$ is hyperbolic positive, if $a > 1$ or equivalently $vw > 0$.
From the definition $\mu_1(\beta^2) = \mu^{RS}(L_0, \Psi([0,\tau])L_0)$, $\mu_2(\beta^2) = \mu^{RS}(L_1, \Psi([0, \tau])L_1)$, we infer that the four hyperbolic cases are determined by the signs of $a, b,c$.

Let $\beta$ be hyperbolic positive, then $a >1, bc=a^2-1 > 0$. If $b, c < 0$, then $\beta$ is hyperbolic positive type one $\mu_1(\beta) - \mu_2(\beta) =  1$. Equivalently we can determine the condition using the signs of $u, v, w, x$. $a > 1, b < 0 \Longleftrightarrow vw> 0, vx>0$. And we have $ux= vw+ 1>0$. So we get $u > 0, v < 0, w < 0, x > 0$ or $u < 0, v > 0, w > 0, x < 0$. If $b, c > 0$, then $\beta$ is hyperbolic positive type two $\mu_1(\beta) - \mu_2(\beta) =  -1$. Equivalently we can determine the condition using the signs of $u, v, w, x$. $a > 1, b > 0 \Longleftrightarrow vw> 0, vx>0$. And we have $ux= vw+ 1>0$. So we get $u > 0, v > 0, w > 0, x > 0$ or $u < 0, v < 0, w < 0, x < 0$. There are two choice of the signs of $u, v, w, x$, because of $\Psi_\beta(\tau) = N(-\Psi_\beta(\frac{\tau}{2}))^{-1}N(-\Psi_{\beta}(\frac{\tau}{2}))$, $\Psi_\beta(\frac{\tau}{2})$ and $-\Psi_\beta(\frac{\tau}{2})$ determine the same $\Psi_\beta(\tau)$.

Repeating the above elementary argument, we can get the following list

If $\beta$ is hyperbolic positive type one, then
\begin{equation*}
\Psi(\tau) = \begin{pmatrix}
a > 1 & b < 0\\
c < 0 & a > 1
\end{pmatrix}
\end{equation*}
or equivalently
\begin{equation*}
\Psi(\frac{\tau}{2}) = \begin{pmatrix}
u > 0 & v < 0\\
w < 0 & x > 0
\end{pmatrix}\ \text{or}\ \Psi(\frac{\tau}{2}) = \begin{pmatrix}
u < 0 & v > 0\\
w > 0 & x < 0
\end{pmatrix}
\end{equation*}

If $\beta$ is hyperbolic positive type two, then
\begin{equation*}
\Psi(\tau) = \begin{pmatrix}
a > 1 & b > 0\\
c > 0 & a > 1
\end{pmatrix}
\end{equation*}
or equivalently
\begin{equation*}
\Psi(\frac{\tau}{2}) = \begin{pmatrix}
u > 0 & v > 0\\
w > 0 & x > 0
\end{pmatrix}\ \text{or}\ \Psi(\frac{\tau}{2}) = \begin{pmatrix}
u < 0 & v < 0\\
w < 0 & x < 0
\end{pmatrix}
\end{equation*}

If $\beta$ is hyperbolic negative type one, then
\begin{equation*}
\Psi(\tau) = \begin{pmatrix}
a < -1 & b > 0\\
c > 0 & a < -1
\end{pmatrix}
\end{equation*}
or equivalently
\begin{equation*}
\Psi(\frac{\tau}{2}) = \begin{pmatrix}
u > 0 & v < 0\\
w > 0 & x < 0
\end{pmatrix}\ \text{or}\ \Psi(\frac{\tau}{2}) = \begin{pmatrix}
u < 0 & v > 0\\
w < 0 & x > 0
\end{pmatrix}
\end{equation*}

If $\beta$ is hyperbolic negative type two, then
\begin{equation*}
\Psi(\tau) = \begin{pmatrix}
a < -1 & b < 0\\
c < 0 & a < -1
\end{pmatrix}
\end{equation*}
or equivalently
\begin{equation*}
\Psi(\frac{\tau}{2}) = \begin{pmatrix}
u > 0 & v > 0\\
w < 0 & x < 0
\end{pmatrix}\ \text{or}\ \Psi(\frac{\tau}{2}) = \begin{pmatrix}
u < 0 & v < 0\\
w > 0 & x > 0
\end{pmatrix}
\end{equation*}

In the proof of Real ECH inequality we need to see the type of $\Psi_\beta(-\tau)$, because
\begin{equation}\label{Appendix}
\Psi(-\tau) = \Psi(\tau)^{-1} = \left(
     \begin{array}{cc}
       a & -b \\
       -c & a \\
     \end{array}
   \right)
\end{equation}
From the above list, we can see, $\Psi_\beta(\tau), \Psi_\beta(-\tau)$ have different type.

Any matrix in the conjugacy class $\{A\Psi(\tau) A^{-1}\}$, $A=\text{diag}(\varepsilon, \frac{1}{\varepsilon}), \varepsilon \in {\bf R}^+$(we need to fix the $x, y$-axis), has the same iteration formula pattern, which represent different trivializations. We can see it as the diagonal symplectic matrix group $D = \{A| A\ \text{is diagonal}, A \in Sp(2)\}$ acts on the manifold $C$ by conjugation, where $C =\{B \in Sp(2)\}$ consists of such symplectic matrix
\begin{equation*} B =
\begin{pmatrix}
a & b\\
c & a
\end{pmatrix}
\end{equation*}

We consider the GIT quotient of this group action which means two matrices $B_1, B_2$ are equivalent if and only if their orbit closures has nonempty intersection $\overline{\{CB_1C^{-1}\}} \bigcap \overline{\{CB_2C^{-1}\}} \neq \emptyset$. See \cite{Mumford} and \cite[section 10.5]{UrsOtto}.

First note that conjugation does not change the trace, $a$ is a invariant under conjugation.

Next we have the formula
\begin{equation*}
CBC^{-1} = \begin{pmatrix}
a & b\varepsilon^2\\
c\frac{1}{\varepsilon^2} & a
\end{pmatrix}
\end{equation*}
If $B$ is nondegenerate, $b,c \neq 0$, we always can choose $\varepsilon$ such that $b\varepsilon^2 = \pm c\frac{1}{\varepsilon^2}$.

If $B$ is elliptic, then $B$ is equivalent to
\begin{equation*}
R(\theta) =
\left(
 \begin{array}{cc}
 \cos\theta & -\sin\theta \\
 \sin\theta & \cos\theta \\
 \end{array}
 \right)
\end{equation*}

If $B$ is hyperbolic, then $B$ is equivalent to
\begin{equation*}
\left(
  \begin{array}{cc}
    a & \pm\sqrt{a^2 - 1} \\
    \pm\sqrt{a^2 - 1} & a \\
  \end{array}
\right), a > 1\ \text{or}\ a < -1
\end{equation*}

It is easy to see that
\begin{equation*}
  \left(
     \begin{array}{cc}
       1 & 0 \\
       0 & 1 \\
     \end{array}
   \right),
   \left(
     \begin{array}{cc}
       1 & \pm b \\
       0 & 1 \\
     \end{array}
   \right),
   \left(
     \begin{array}{cc}
       1 & 0 \\
       \pm b & 1 \\
     \end{array}
   \right), b> 0
\end{equation*}
are equivalent to each other in the GIT quotient.

Similarly, we have
\begin{equation*}
  \left(
     \begin{array}{cc}
       -1 & 0 \\
       0 & -1 \\
     \end{array}
   \right),
   \left(
     \begin{array}{cc}
       -1 & \pm b \\
       0 & -1 \\
     \end{array}
   \right),
   \left(
     \begin{array}{cc}
       -1 & 0 \\
       \pm b & -1 \\
     \end{array}
   \right), b > 0
\end{equation*}
are equivalent to each other in GIT quotient.

Topologically, the quotient is isomorphic to a circle with four spikes, each spike represents a hyperbolic subcase, and we can identify the quotient space as
\begin{equation*}
  \{z \in {\bf C}| |z| = 1 \} \bigcup \{z \in {\bf C}| z = x+ iy, x=-1 \} \bigcup \{z \in {\bf C}| z = x+ iy, x=1 \} \subset {\bf C}
\end{equation*}

An element $z= e^{i\theta} \in \{z \in {\bf C}| |z| = 1 \}$, $z$ is identified with $R(\theta)$

An element $z= x+ yi \in \{z \in {\bf C}| z = x+ iy, x=-1 \}$, $z$ is identified with
\begin{equation*}
  \left(
     \begin{array}{cc}
       -\sqrt{y^2 + 1} & y \\
       y & -\sqrt{y^2 + 1} \\
     \end{array}
   \right)
\end{equation*}
among them $x = -1, y > 0$ represents negative hyperbolic type one, $x = -1, y < 0$ negative hyperbolic type two.

An element $z= x+ yi \in \{z \in {\bf C}| z = x+ iy, x=1 \}$, $z$ is identified with
\begin{equation*}
  \left(
     \begin{array}{cc}
       \sqrt{y^2 + 1} & y \\
       y & \sqrt{y^2 + 1} \\
     \end{array}
   \right)
\end{equation*}
among them $x = 1, y < 0$ represents positive hyperbolic type one, $x = 1, y > 0$ positive hyperbolic type two.

\section*{Acknowledgements}
This paper is accomplished when the author is visiting University of Augsburg, Germany. The author is supported by China Scholarship Council(CSC) No.201806200130. Professor Frauenfelder has given many valuable advises to this paper. The author also thanks professor Frauenfelder's hospitality and fruitful discussions when he stays in Germany.

\bibliography{realech}

\end{document}